\documentclass[%
a4paper
]
{amsart}


\usepackage{amsmath,amssymb,amscd}
\usepackage{extarrows}

\usepackage{tikz-cd}

\numberwithin{equation}{section}
\newtheorem{theorem}[equation]{Theorem}
\newtheorem{claim}[equation]{Claim}
\newtheorem{corollary}[equation]{Corollary}
\newtheorem{lemma}[equation]{Lemma}

\newtheorem{proposition}[equation]{Proposition}

\theoremstyle{remark}
\newtheorem{remark}[equation]{Remark}
\theoremstyle{definition}
\newtheorem{definition}[equation]{Definition}

\newtheorem{example}[equation]{Example}

\renewcommand{\epsilon}{\varepsilon}
\renewcommand{\phi}{\varphi}

\newcommand{\Alg}{\ccat{Alg}}

\DeclareMathOperator{\Aut}{Aut}

\newcommand{\blank}{(\;)}
\newcommand{\boundary}{\partial}
\newcommand{\boundaryc}{\boundary\close}
\newcommand{\cardinality}{\mathrm{card}}
\newcommand{\cat}{\mathcal}
\newcommand{\Cat}{\ccat{Cat}}

\newcommand{\ccat}{\mathrm}
\newcommand{\Coalg}{\ccat{Coalg}}

\DeclareMathOperator*{\colim}{colim}
\DeclareMathOperator{\Conf}{Conf}

\newcommand{\widebar}{\overline}
\newcommand{\close}{\widebar}

\newcommand{\Com}{\ccat{Com}}
\newcommand{\complete}{\widehat}

\newcommand{\Cosh}{\ccat{Cosh}}
\newcommand{\cotensor}{\Box}

\newcommand{\diag}{\Delta}

\newcommand{\directsum}{\oplus}
\newcommand{\Dis}{\ccat{Disj}}

\newcommand{\Disk}{\ccat{Disk}}
\newcommand{\disj}{\sqcup}
\newcommand{\Disj}{\bigsqcup}
\renewcommand{\dot}{\bullet}

\newcommand{\Elu}{\mathbb{E}}

\DeclareMathOperator{\Emb}{Emb}

\renewcommand{\equiv}{\sim}

\newcommand{\equivwith}{\simeq}
\newcommand{\equivto}{\xrightarrow{\equiv}}

\DeclareMathOperator{\Fibre}{Fibre}
\newcommand{\field}{\mathbb}
\newcommand{\C}{\field{C}}
\newcommand{\R}{\field{R}}
\newcommand{\Z}{\field{Z}}
\newcommand{\Fin}{\ccat{Fin}}

\newcommand{\from}{\leftarrow}

\newcommand{\Ggamma}{\mathbf{\Gamma}}

\newcommand{\intersect}{\cap}

\newcommand{\interior}{\mathring}
\newcommand{\into}{\hookrightarrow}
\newcommand{\kara}{\varnothing}
\newcommand{\kore}{\textbf}

\DeclareMathOperator*{\laxcol}{laxcolim}

\newcommand{\Lie}{\ccat{Lie}}
\newcommand{\lie}{\mathfrak}
\newcommand{\lift}{\widetilde}
\newcommand{\loc}{\mathrm{loc}}
\newcommand{\longequivfrom}{\xlongleftarrow{\equiv}}

\newcommand{\longfrom}{\longleftarrow}
\newcommand{\longto}{\longrightarrow}

\newcommand{\Man}{\ccat{Man}}

\newcommand{\manc}{\ccat{Man}}
\newcommand{\Map}{\mathrm{Map}}
\newcommand{\Mod}{\ccat{Mod}}

\newcommand{\noudo}{\sharp}
\newcommand{\hitan}{\mathrm{nu}}

\newcommand{\onto}{\twoheadrightarrow}
\newcommand{\op}{\mathrm{op}}
\newcommand{\Open}{\ccat{Open}}

\newcommand{\positive}{+}
\newcommand{\pr}{\mathrm{pr}}
\newcommand{\Pre}{\ccat{Pre}}

\newcommand{\reduce}{\widetilde}

\newcommand*{\resto}[1]{|_{#1}}
\newcommand{\sphere}{\field{S}}

\newcommand{\Sh}{\ccat{Sh}}

\newcommand{\Simp}{\mathbf{\Delta}}
\newcommand{\simp}{\Delta}

\newcommand{\kocoprod}{\amalg}
\newcommand{\Space}{\ccat{Space}}

\newcommand{\sub}{\subset}

\newcommand{\susp}{\Sigma}
\DeclareMathOperator{\Sym}{Sym}

\newcommand{\tensor}{\otimes}
\newcommand{\Tensor}{\bigotimes}

\newcommand{\union}{\cup}
\newcommand{\Union}{\bigcup}

\newcommand{\unity}{\mathbf{1}}
\newcommand{\wreath}{\boldsymbol{\wr}}
\newcommand{\zero}{\mathbf{0}}

\title
{Koszul duality for locally constant factorisation algebras}
\author[Matsuoka Takuo]{Matsuoka, Takuo}
\email{motogeomtop@gmail.com}
\date{}

\subjclass[2000]{55M05
, 16E40
, 57R56
, 16D90
}
\keywords{Koszul duality, factorization algebra, topological chiral
  homology, topological quantum field theory, higher Morita category}

\begin{document}
\setcounter{section}{-1}
\setcounter{equation}{-1}
\maketitle

\begin{abstract}
Generalising Jacob Lurie's idea on the relation between the Verdier
duality and the iterated loop space theory, we study the Koszul
duality for locally constant factorisation algebras.
We formulate an analogue of Lurie's ``nonabelian Poincar\'e duality''
theorem (which is closely related to earlier results of Graeme Segal,
of Dusa McDuff, and of Paolo Salvatore) in a symmetric monoidal stable
infinity category carefully, using John Francis' notion of excision.
Its proof depends on our study of the Koszul duality for
$E_n$-algebras in \cite{local}.
As a consequence, we obtain a Verdier type equivalence for
factorisation algebras by a Koszul duality construction.
\end{abstract}

\tableofcontents

\section{Introduction}
\setcounter{subsection}{-1}
\setcounter{equation}{-1}

This paper is continuation of \cite{descent, local}, and together with
these papers, is based on the author's thesis \cite{thesis}.
Building on results from \cite{descent} on the foundations
for the theory of locally constant factorisation algebras, we study
here the Koszul duality for these algebras.
For this, we use the results of \cite{local} on the local case of the
Koszul duality for $E_n$-algebras.

\subsection{Koszul duality for factorisation algebras}
\label{sec:koszul-intro}
Lurie has discovered what he calls the ``nonabelian Poincar\'e
duality'' theorem \cite{higher-alg}.
(According to him, closely related results were earlier obtained by
Segal \cite{segal-config}, McDuff \cite{mcduff} and Salvatore
\cite{salvato}.)
Classically, the Poincar\'e duality theorem concerns a locally
constant sheaf of abelian groups, or more generally of
``\emph{stable}'' (in homotopical sense) objects such as chain
complexes or spectra, on a manifold.
Lurie's theorem states that a form of the theorem also holds with
\emph{unstable} coefficients, rather than stable or abelian
coefficients.
By ``unstable coefficients'', we mean the coefficients in a locally
constant sheaf of spaces.
One formulation of the classical Poincar\'e duality theorem is
that the compactly supported cohomology of the sheaf is a
\emph{homology} theory, namely, forms a cosheaf.
Lurie's discovery is that the suitable homology theory for unstable
coefficients is the topological chiral homology, which generalises the
cosheaf homology.
This homology theory determines a locally constant
factorisation algebra, rather than a cosheaf.
The following is a formulation of Lurie's theorem using the language
of factorisation algebras (and sheaves).

\begin{theorem}[Lurie]\label{thm:lurie-poincare}
Let $M$ be a manifold of dimension $n$.
Let $E$ be a locally constant sheaf of based spaces on $M$.
If every stalk of $E$ has connectivity at least $n$, then the
(locally constant) prealgebra $E^+$ of spaces on $M$ defined by
$E^+(U)=\Gamma_c(U,E)$, the compactly supported cohomology, is a
factorisation algebra.
\end{theorem}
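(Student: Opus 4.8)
The plan is to verify that the precosheaf $E^+\colon U \mapsto \Gamma_c(U,E)$, made covariant in open inclusions by extension via the basepoint, satisfies the two defining conditions of a locally constant factorisation algebra: multiplicativity under disjoint unions, and Francis' excision. That $E^+$ is locally constant is inherited from $E$: if $U \into V$ is an isotopy equivalence of opens, then extension by the basepoint $\Gamma_c(U,E) \to \Gamma_c(V,E)$ is an equivalence, since the isotopy carries compactly supported sections on $V$ back into $U$ up to homotopy. Multiplicativity, $\Gamma_c(\bigsqcup_i U_i, E) \simeq \prod_i \Gamma_c(U_i, E)$, is immediate, because a compactly supported section of a disjoint union is exactly a tuple of compactly supported sections and the basepoint supplies the unit. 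The entire weight of the theorem therefore falls on excision.

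To verify excision I would reduce, using local constancy, to the model collar gluing $M = M_0 \cup_{N \times \R} M_1$ and show that the canonical comparison map exhibits $\Gamma_c(M,E)$ as the two-sided bar construction on $\Gamma_c(M_0,E)$ and $\Gamma_c(M_1,E)$ over $\Gamma_c(N\times\R, E)$. The base case is $M = \R^n$ with stalk $X$: one-point compactification identifies $\Gamma_c(\R^n, E) \simeq \Map_*(S^n, X) = \Omega^n X$, and the collar structure $\R^n = \R^{n-1}\times\R$ equips this with its iterated-loop $E_n$-algebra structure. Here the hypothesis enters decisively: since every stalk is $n$-connective, $\Omega^n X$ is connected and the associated $E_n$-algebra is grouplike. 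Grouplikeness is precisely what promotes $E^+$ from a prealgebra that is merely multiplicative to one that satisfies codescent, i.e. behaves like a homology theory on $M$.

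The crux, and the main obstacle, is the bar-construction equivalence itself. I would compare the relative tensor product of the pieces with $\Gamma_c(M,E)$ by a scanning map, and prove it an equivalence by induction over a handle decomposition (or a factorising cover) of $M$, the inductive step being a Mayer--Vietoris comparison across a collar. At each stage the comparison map is a priori only highly connected, with a connectivity estimate of Freudenthal/Blakers--Massey type that improves with the connectivity of the stalks; the $n$-connectivity hypothesis is exactly what makes these estimates accumulate so that, after passing to the colimit computing $\Gamma_c$, the map becomes an equivalence. Controlling these bounds uniformly along the induction --- equivalently, identifying the grouplike $E_n$-structure on disk values with an iterated loop structure so that group completion is invisible --- is the technical heart, and it is precisely here that the results on the Koszul duality for $E_n$-algebras from \cite{local} do the real work, replacing the classical scanning estimates by the Koszul-dual computation.
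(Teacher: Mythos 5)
A preliminary remark: the paper never proves Theorem \ref{thm:lurie-poincare}. It is quoted from Lurie \cite{higher-alg} as background, with Francis' excision-based proof indicated in \cite{glanon,francis}; what the paper itself proves are analogues in other settings --- Theorem \ref{thm:gromov} for algebras of spaces on open manifolds, and Theorem \ref{thm:poincare-duality-complete} in a complete soundly filtered stable category --- and both of those proofs follow the route you outline: reduce to excision via Theorem \ref{thm:excision}, induct over a handle decomposition, and close the excision step with an algebraic lemma about the two-sided bar construction. Your first two paragraphs (multiplicativity, local constancy, the reduction to excision, the disk computation $\Gamma_c(\R^n,E)\simeq\Omega^nX$) are correct and consistent with that strategy. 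The problems are in your third paragraph, at what you yourself call the crux.

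First, you have misidentified what the connectivity hypothesis does. Group-likeness of the disk values is automatic and has nothing to do with the hypothesis: $\Omega^nX$ is group-like for \emph{every} pointed $X$, since $\pi_0\Omega^nX=\pi_nX$ is a group; no connectivity is needed for that. Consequently group-likeness of disk values cannot be ``precisely what promotes $E^+$ to satisfy codescent'', and indeed the theorem fails with perfectly group-like disk values: take $n=2$, $M=\R^2$, and $E$ the constant sheaf at $X=S^1$. The disk algebra is $\Omega^2S^1\simeq *$, yet for an open annulus $U\sub\R^2$ one has
\[
E^+(U)\;=\;\Gamma_c(U,E)\;\simeq\;\Map_*(S^2\vee S^1,\,S^1)\;\simeq\;\Omega^2S^1\times\Omega S^1\;\simeq\;\Z,
\]
while $\colim_{\Dis(U)}E^+\simeq\int_U\Omega^2S^1\simeq *$, so $E^+$ is not a factorisation algebra. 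What the hypothesis actually buys is a statement on the sheaf side: stalkwise $n$-connectivity is exactly what makes $E$ the $n$-fold delooping of its disk algebra ($E\simeq B^n\Omega^nE$ on stalks), so that the values of $E^+$ on non-disk opens --- which, as the example shows, remember lower homotopy of the stalks that $\Omega^nE$ has forgotten --- are determined by the disk data. Group-likeness does get used, but elsewhere: in the excision step, for the gluing algebra $G=E^+(N\times\R)$, where the paper's form of the needed lemma is Lemma \ref{lemma:algebraic-unstable}, proved by checking the bar-construction comparison map fibrewise over $BG=G^!$. Producing, along the handle induction, the delooping statements that feed that lemma is precisely the content your proposal skips.

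Second, your concluding appeal to \cite{local} cannot be made to work. The Koszul duality results there (Proposition \ref{prop:commuting-tensor-and-cotensor} and Theorem \ref{thm:josh-complete-implies-koszul-complete}, the inputs to Theorem \ref{thm:poincare-duality-complete}) are proved in a complete, soundly filtered, \emph{stable} category under positivity hypotheses; the Cartesian category of spaces is neither stable nor filtered-complete in that sense, and no positivity is available for $\Omega^nX$. This mismatch is the very reason the paper formulates Theorem \ref{thm:poincare-duality-complete} as an analogue of Lurie's theorem in a different setting rather than as a proof of it, and why its unstable arguments run through Lemma \ref{lemma:algebraic-unstable} instead. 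So the technical heart of your argument is delegated to a theorem that does not apply, while the scanning/Blakers--Massey alternative you mention in the same breath is left as a gesture; one of the two would have to be carried out, and neither is.
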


He notes that the stalk of $E^+$ is the $n$-fold loop space of the
stalk of $E$, and the structure of a (pre)algebra of $E^+$ globalises
the $E_n$-algebra structure which characterises $n$-fold loop spaces
in the iterated loop space theory.
As a consequence, the theorem leads to a globalisation of the iterated
loop space theory in the form of an equivalence between suitable
infinity categories of sheaves and of factorisation algebras.
This is an unstable counterpart of the Verdier duality theorem
expressed as an equivalence between sheaves and cosheaves valued in a
stable infinity category.

\bigskip

Iterated loop space theory is an instance of the Koszul duality
\cite{gk} for $E_n$-algebras, and locally constant factorisation
algebras globalise $E_n$-algebras.
This motivates one to consider the Koszul duality for factorisation
algebras, and look for a generalisation of the Poincar\'e and the
Verdier theorems in this context.

For this purpose, we have defined \emph{compactly supported}
factorisation homology (Section
\ref{sec:compactly-supported-homology}).
Given a locally constant factorisation algebra $A$ on a manifold $M$,
we denote the compactly supported homology on an open submanifold $U$
by $\int^c_UA$.
The association $A^+\colon U\mapsto\int^c_UA$ is then a
pre\emph{co}algebra on $M$.
The question then is how close $A^+$ is to a factorisation coalgebra.

Unfortunately, there arises a problem very soon.
Namely, while topological chiral homology behaves very nicely in a
symmetric monoidal infinity category in which the monoidal
multiplication functor preserves sifted homotopy colimits
variablewise (see \cite{higher-alg,descent}), this assumption on
sifted colimits is essential.
Even though this condition is satisfied often in practice, if we would
like to also consider factorisation \emph{co}algebras, then we would
need the monoidal operations to also preserve sifted homotopy
\emph{limits}.
This is a very strong constraint, even though it is satisfied in
Lurie's context.

One of the principal aims of the present work is to remove this
constraint in some other contexts which arise in practice, at least
for the purpose of generalising Lurie's results.
In this direction, we have obtained quite satisfactory results by
restricting our attention to algebras which are \emph{complete} with
respect to a suitable filtration.
We have shown that many algebras which arise in practice are of this
kind (after some natural procedure of completion, if necessary).

Indeed, we have established in \cite{local} a very good theory of the
Koszul duality in the local case of $E_n$-algebras, in a
complete filtered context.
In this paper, we show that the desired global results
follow from this good local theory.
Let us overview the results of \cite{local} and then the main results
of the present work.

\subsection{Koszul duality for complete $E_n$-algebras}
Let $n$ be a finite non-negative integer.

Let us review the setting of \cite{local}.
Let $\cat{A}$ be a symmetric monoidal infinity category.
We assume that it has a filtration which is compatible with the
symmetric monoidal structure in a suitable way.
Primary examples are the category of filtered objects in a
reasonable symmetric monoidal infinity category and a symmetric
monoidal stable infinity category with a compatible t-structure
\cite{higher-alg} (satisfying a mild technical condition).
Another family of examples is given by functor categories
admitting the Goodwillie calculus \cite{goodwi}, where the filtration
is given by the degree of excisiveness.
Indeed, we also assume that $\cat{A}$ is stable in the sense stated
in Lurie's book \cite{higher-alg}.
(See To\"en--Vezzosi's \cite{toen-vezzosi} for the origin of the
notion.)
However, our monoidal structure is not the direct sum (which does not
have the kind of compatibility with the filtration we need), but is
like the (derived) tensor product of chain complexes, and the smash
product of spectra, so our context is ``nonabelian''.
We further assume that $\cat{A}$ is complete with respect to the
filtration in a suitable sense.
The mentioned examples admit completion, and in these examples,
the category $\cat{A}$ we indeed work in is the category of
complete objects in the mentioned category, equipped with completed
symmetric monoidal structure.
These categories satisfy a few further technical assumptions we need,
which we shall not state here.

In such a complete filtered infinity category $\cat{A}$, any algebra
comes with a natural filtration with respect to which it is complete.
In the mentioned examples, the towers associated to the filtration are
the canonical (or ``defining'') tower, the Postnikov tower, and the
Taylor tower, and the objects we deal with are the limits of the
towers.
We have established the Koszul duality for $E_n$-algebras in $\cat{A}$
which is \emph{positively} filtered.
The corresponding restriction on the filtration of coalgebras is given
by another condition which we call \kore{copositivity}.
The theorem is as follows.

\begin{theorem}[{\cite[Theorem 0.0]{local}}]
\label{thm:koszul-duality-e-n}
Let $\cat{A}$ be as above.
Then the constructions of Koszul duals give inverse equivalences
\[
\Alg_{E_n}(\cat{A})_\positive\xlongleftrightarrow{\equiv}\Coalg_{E_n}(\cat{A})_\positive
\]
between the infinity category of positive augmented $E_n$-algebras and
copositive augmented $E_n$-coalgebras in $\cat{A}$.
\end{theorem}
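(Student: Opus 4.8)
The plan is to construct the Koszul dual functors as iterated bar and cobar constructions and to prove that the resulting bar--cobar adjunction restricts to an equivalence precisely on the positive/copositive subcategories, the completeness of the filtration being exactly what forces convergence. First I would reduce to the case $n=1$ by Dunn--Lurie additivity, which gives $\Alg_{E_n}(\cat{A})\equivwith\Alg_{E_1}(\Alg_{E_{n-1}}(\cat{A}))$ and dually for coalgebras, so that an $E_n$-algebra is an associative algebra object in the symmetric monoidal infinity category $\Alg_{E_{n-1}}(\cat{A})$. To run an induction on $n$ I must verify that the hypotheses on $\cat{A}$ — stability, the compatible complete filtration, and the further technical assumptions — are inherited by $\Alg_{E_{n-1}}(\cat{A})$ with its induced monoidal structure and filtration, and that the positivity condition on an $E_n$-algebra corresponds level by level to positivity of the underlying $E_1$-algebra in $\Alg_{E_{n-1}}(\cat{A})$. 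The $E_n$-Koszul dual is then defined by iterating the single bar construction $n$ times, one monoidal layer at a time.

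For the base case I would build the adjunction $\mathrm{Bar}\colon\Alg_{E_1}(\cat{A})\rightleftarrows\Coalg_{E_1}(\cat{A})\colon\mathrm{coBar}$ on augmented and coaugmented objects, with $\mathrm{Bar}(A)$ the geometric realisation of the reduced two-sided bar construction on the augmentation ideal and $\mathrm{coBar}$ its cosimplicial totalisation adjoint. The content of the theorem is that the unit $\unit\colon\id\to\mathrm{coBar}\,\mathrm{Bar}$ and counit $\counit\colon\mathrm{Bar}\,\mathrm{coBar}\to\id$ become equivalences after restriction to positive algebras and copositive coalgebras, which then forces the two functors to be mutually inverse. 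My strategy is a filtration argument: a positive filtration on $A$ induces one on $\mathrm{coBar}\,\mathrm{Bar}(A)$, and on the associated graded the composite degenerates to the classical Koszul complex of the augmentation, whose acyclicity (the associative Koszulness, a Poincar\'e--Birkhoff--Witt-type statement) identifies the associated graded of $\unit$ with an equivalence. Completeness of the filtration — that $A$ and its dual are the limits of their respective towers — should upgrade this graded equivalence to an equivalence of the complete objects, and dually for $\counit$. Iterating $n$ times, while tracking the $n$-fold degree shift that matches ``positive'' on the algebra side with ``copositive'' on the coalgebra side, yields the stated equivalence.

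The hard part will be the convergence. The bar--cobar adjunction is far from an equivalence in general, the discrepancy being a derived pro-nilpotent completion phenomenon, and the entire role of the positivity hypothesis is to annihilate it. Concretely, I expect the main obstacle to be proving that positivity of $A$ makes the filtration on $\mathrm{coBar}\,\mathrm{Bar}(A)$ both exhaustive and complete, so that checking $\unit$ on associated graded genuinely suffices — and, crucially, that this completeness is preserved at every stage of the $n$-fold iteration, since each bar construction must again output a positively or copositively filtered object in the next monoidal layer for the induction to continue. Keeping the completeness and the degree shifts synchronised across all $n$ iterations, rather than the single Koszul acyclicity computation on associated graded, is where the real work lies.
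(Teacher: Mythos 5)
Your overall route is the one the paper itself points to: note first that Theorem \ref{thm:koszul-duality-e-n} is not proved in this paper at all, but imported from \cite[Theorem 0.0]{local}, and the only indication given here of its proof (in the proof of Theorem \ref{thm:verdier}) is that it ``is obtained by iterating the $E_1$-case'' established in \cite[Sections 4.0, 4.1]{local}. Your base case --- a bar--cobar adjunction whose unit and counit become equivalences by a filtration argument, with positivity supplying convergence and completeness upgrading the associated-graded equivalence --- and your insistence on tracking a degree shift per iteration (which is exactly what the bound $\cat{A}_{\ge 1-n\omega}$ in Definition \ref{def:e-n-copositive} records) match the cited arguments in spirit; the supporting statements quoted in this paper from \cite{local}, namely Proposition \ref{prop:commuting-tensor-and-cotensor} and Theorem \ref{thm:josh-complete-implies-koszul-complete}, are precisely such $E_1$-level convergence results.

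There is, however, a genuine gap in your inductive mechanism. You propose to verify that ``stability, the compatible complete filtration, and the further technical assumptions'' are inherited by $\Alg_{E_{n-1}}(\cat{A})$, and then to re-invoke the $E_1$-theorem with $\Alg_{E_{n-1}}(\cat{A})$ as the new ground category. That verification cannot succeed: the category of augmented $E_{n-1}$-algebras in a stable category is essentially never stable. In augmented algebras the categorical suspension of $A$ is computed by a bar construction and the loop object by $\unity\times_A\unity$, and the failure of these to be inverse to one another is precisely the nontriviality of Koszul duality; moreover the hypotheses of \cite{local} (exactness of the monoidal multiplication in each variable, uniform bounds for loops and for sequential limits, soundness of the filtration) are formulated in terms of the stable structure and do not even make sense for $\Alg_{E_{n-1}}(\cat{A})$. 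Using the monoidal structure of $\Alg_{E_{n-1}}(\cat{A})$ to \emph{define} the iterated bar construction is fine, but the equivalence statement cannot be obtained by rerunning the $E_1$-theorem there. The repair, and what the iteration in \cite{local} actually rests on, is to prove that the $E_1$-duality $\Alg_{E_1}(\cat{A})_\positive\equivwith\Coalg_{E_1}(\cat{A})_\positive$ is a \emph{symmetric monoidal} equivalence, and then let it act on categories of algebra and coalgebra objects: writing $\Alg_{E_n}\equivwith\Alg_{E_{n-1}}(\Alg_{E_1})$ by additivity, the induced equivalence lands in $\Alg_{E_{n-1}}(\Coalg_{E_1}(\cat{A})_\positive)$, one commutes the $E_{n-1}$-algebra structure past the $E_1$-coalgebra structure, and repeats on the next $E_1$-layer. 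At every stage the duality being applied is the one defined over $\cat{A}$ itself, where the hypotheses hold; only the positivity and copositivity bounds, not the ambient category, change from layer to layer. This monoidal functoriality of the $E_1$-equivalence is the missing ingredient; without it, or some substitute for the false inheritance claim, your induction does not get off the ground.
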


\subsection{The Poincar\'e and the Verdier theorems}
For obtaining global results from the local theory of \cite{local},
a breakthrough was the discovery by Francis of the
notion of \emph{excision} \cite{glanon,francis}.
Excision is concerned with what happens to the value of a prealgebra
when a manifold is glued as in the composition in a cobordism
category.
Namely, let $A$ be a prealgebra on a manifold $M$, and suppose
an open submanifold $U$ is cut into two pieces $V$ and $W$ along a
codimension $1$ submanifold $N$ whose normal bundle is trivialised.
Then one finds an $E_1$-algebra, which we shall denote by $A(N)$, by
restricting $A$ to a tubular neighbourhood of $N$ (diffeomorphic to
$N\times\R^1$ by the trivialisation) and then pushing it down to
$\R^1$.
One finds that $A(V)$ and $A(W)$ are right and left modules
respectively over $A(N)$, and the \kore{excision} property requires
that the canonical map
\[
A(V)\tensor_{A(N)}A(W)\longto A(U)
\]
be an equivalence in every such situation (where the tensor product
should be understood as ``derived'', if there is also an underived,
i.e., not homotopy invariant, tensor product).

Francis proved that excision property characterises topological chiral
homology, and have applied this theorem to give a simple proof of
Lurie's nonabelian Poincar\'e duality theorem \cite{glanon,francis}.
The excision property gives a convenient way to compute topological
chiral homology.

Influenced by this work, we \emph{formulate} the Poincar\'e
duality theorem in our context using excision, and in this form, the
theorem holds if the local theory is good enough.
We say that an augmented locally constant factorisation algebra on an
$n$-dimensional manifold is \kore{positive} if it is locally so as an
$E_n$-algebra.

\begin{theorem}[Theorem \ref{thm:poincare-duality-complete}]
\label{thm:poincare-intro}
Let $A$ be a positive augmented locally constant factorisation
algebra on $M$, valued in $\cat{A}$ as in the previous section.
Then the precoalgebra $A^+$ defined by $A^+(U)=\int^c_UA$, satisfies
excision.
\end{theorem}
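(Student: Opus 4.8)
The plan is to prove that $A^+$ satisfies excision by reducing the global statement to the local Koszul duality theorem \ref{thm:koszul-duality-e-n}. The key observation is that excision for $A^+$ is a statement about what happens when a manifold $U$ is cut along a codimension-$1$ submanifold $N$ with trivialised normal bundle into pieces $V$ and $W$. Dualising Francis' excision for the algebra $A$, one expects $A^+(V)$ and $A^+(W)$ to be right and left comodules over the $E_1$-coalgebra $A^+(N)$, and the content of the theorem is that the canonical comparison map realising $A^+(U)$ as the (derived, i.e. cobar-type) cotensor coproduct $A^+(V)\cotensor_{A^+(N)}A^+(W)$ is an equivalence. First I would make precise the comodule structures: the compactly supported homology $\int^c$ is contravariantly functorial in the relevant gluing data, so the coalgebra structure on $A^+(N)$ and the coaction on $A^+(V)$, $A^+(W)$ arise by applying $\int^c$ to the collar/tubular-neighbourhood decompositions, exactly dualising how Francis builds $A(N)$ and its module actions on $A(V)$, $A(W)$.

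The heart of the argument is the following diagram of identifications. By definition $A^+(U)=\int^c_U A$, and by the excision property of topological chiral homology itself (Francis' theorem), $A(U)\simeq A(V)\tensor_{A(N)}A(W)$ together with the analogous statements on all the pieces. I would therefore factor the proof through the commutativity of Koszul duality with the gluing/cutting operations. Concretely, the local theorem \ref{thm:koszul-duality-e-n} says that taking Koszul duals is an equivalence $\Alg_{E_1}(\cat{A})_\positive \longleftrightarrow \Coalg_{E_1}(\cat{A})_\positive$, and more is true: this equivalence is suitably monoidal and carries the bar construction on the algebra side to the relevant cotensor (cobar) construction on the coalgebra side. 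Since $A^+(N)$ is precisely the Koszul dual $E_1$-coalgebra of $A(N)$ (this is how compactly supported homology on a line is defined, matching the $n=1$ case of the Poincar\'e philosophy), and similarly $A^+(V)$, $A^+(W)$ are the Koszul duals of the modules $A(V)$, $A(W)$, the equivalence transports Francis' relative tensor product to the desired cotensor coproduct.

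The key steps, in order, are: \emph{(i)} set up the $E_1$-coalgebra $A^+(N)$ and the one-sided comodule structures on $A^+(V)$ and $A^+(W)$, checking they are copositive so that Theorem \ref{thm:koszul-duality-e-n} applies; \emph{(ii)} identify each of $A^+(N)$, $A^+(V)$, $A^+(W)$, $A^+(U)$ as Koszul duals of the corresponding unstable/algebraic data $A(N)$, $A(V)$, $A(W)$, $A(U)$, using the compatibility of $\int^c$ with the $1$-dimensional local picture; \emph{(iii)} invoke Francis' excision equivalence $A(U)\simeq A(V)\tensor_{A(N)}A(W)$ on the algebra side; and \emph{(iv)} apply the local Koszul duality, together with its compatibility with bar/cobar constructions, to turn this relative tensor product into the relative cotensor product $A^+(V)\cotensor_{A^+(N)}A^+(W)$, concluding that the canonical map $A^+(V)\cotensor_{A^+(N)}A^+(W)\longto A^+(U)$ is an equivalence.

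The main obstacle, I expect, is step \emph{(iv)}: establishing that the Koszul duality equivalence of \ref{thm:koszul-duality-e-n} intertwines the relative tensor product over an $E_1$-algebra with the relative cotensor product over the dual $E_1$-coalgebra. This is where the completeness and positivity hypotheses on $\cat{A}$ do the real work, since without control over convergence of the bar/cobar spectral sequences the cotensor product need not compute the correct derived object, and the naive comparison map can fail to be an equivalence. In particular one must verify that positivity of $A$ as an $E_n$-algebra (the standing hypothesis) guarantees copositivity of all the coalgebraic data appearing after cutting, so that the local theorem is genuinely applicable to each piece; this is precisely the point at which the restriction to complete filtered $\cat{A}$ cannot be dropped.
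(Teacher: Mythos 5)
Your overall strategy---reduce excision for $A^+$ to an algebraic bar/cobar comparison by exploiting excision for $A$ itself, then conclude from the complete filtered local theory---is the same as the paper's, but two of your steps have genuine gaps.

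First, your identification in step \emph{(ii)} is false as stated, except in the special case where the glued manifold has empty boundary at infinity. In the paper's setup the glued piece $W$ (your $U$) has a specified compact closure $\close{W}$, and the cutting locus $N$, as well as both pieces, meet $\boundaryc{W}$; compactly supported homology caps off along \emph{that} boundary too, so $A^+$ of a piece is \emph{not} the Koszul dual of the corresponding $A(N)$-module. The paper repairs this by applying excision of $A$ \emph{compatibly} to $W$ and to a collar of $\boundaryc{W}$ (the two applications differ by the order in which a bisimplicial object is realised, cf.\ the proof of Theorem \ref{thm:gromov}), producing the $E_1$-algebra $B=A(\interior{U}\intersect\interior{V})\tensor_{A(\boundary U\intersect\boundary V)}\unity$ and modules $K=A(\interior{U})\tensor_{A(\boundary U)}\unity$ and $L$ (its mirror), and identifying $A^+(W)=K\tensor_BL$, $A^+(\interior{U})=K\tensor_B\unity$, $A^+(\interior{U}\intersect\interior{V})=\unity\tensor_B\unity$. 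Only after this reduction does excision for $A^+$ become the purely algebraic claim that the canonical map
\[
K\tensor_BL\longto(K\tensor_B\unity)\cotensor_{\unity\tensor_B\unity}(\unity\tensor_BL)
\]
is an equivalence. Your outline never produces the intermediate positive algebra $B$, and without it the objects in your steps \emph{(iii)} and \emph{(iv)} are not the ones whose cotensor product you need.

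Second, step \emph{(iv)} cannot be carried out the way you propose. Theorem \ref{thm:koszul-duality-e-n} is an equivalence between categories of augmented $E_n$-algebras and coalgebras; it says nothing, formally, about relative tensor products of \emph{modules}, and the ``compatibility with bar/cobar constructions'' you want to invoke is not a formal consequence of it---in \cite{local} such module-level statements are ingredients of the proof of that equivalence, not corollaries. What the paper actually invokes are precisely two such results, quoted as Proposition \ref{prop:commuting-tensor-and-cotensor} (the map $K\tensor_A(L\cotensor_CX)\to(K\tensor_AL)\cotensor_CX$ is an equivalence for positive $A$, copositive $C$, and bounded-below modules) and Theorem \ref{thm:josh-complete-implies-koszul-complete} (the map $K\to(K\tensor_A\unity)\cotensor_{\unity\tensor_A\unity}\unity$ is an equivalence). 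Given that $B$ above is positive (which does follow from positivity of $A$, as you anticipate) and that the relevant modules are bounded below---a hypothesis your proposal omits---these two results yield the displayed equivalence immediately. You are right that this is where completeness and positivity do the real work, but a correct write-up must cite or re-prove exactly these module-level statements rather than appeal to the equivalence of categories.
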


In order to show the ubiquity of algebras to which this theorem
applies, we have shown that \emph{any} augmented factorisation algebra
(taking values in a reasonable symmetric monoidal stable infinity
category) comes with a canonical positive filtration
(\textbf{Proposition \ref{prop:filtered-factorisation-algebra}}).
It follows that the Poincar\'e theorem holds for its completion
(Corollary \ref{cor:poincare-for-completed-algebra}).

See Section \ref{sec:variant} for another example.

\begin{remark}
Related results can be found in the work of
Francis \cite{francis}, and Ayala and Francis \cite{ayala-fran}.
\end{remark}

\smallskip

Theorem \ref{thm:poincare-intro} also leads to the Koszul duality for
factorisation algebras, as a Verdier type equivalence of categories.
Let $\Alg_M(\cat{A})_\positive$ (resp.~$\Coalg_M(\cat{A})_\positive$)
denote the infinity category of positive (resp.~copositive) augmented
prealgebras (resp.~precoalgebras) on $M$ which is locally constant in
a suitable sense, and satisfies excision.

\begin{theorem}[Theorem \ref{thm:verdier}]
\label{thm:verdier-intro}
Let $\cat{A}$ be as above.
Then the functor
\[
\blank^+\colon\Alg_M(\cat{A})_\positive\longto\Coalg_M(\cat{A})_\positive
\]
is an equivalence.
\end{theorem}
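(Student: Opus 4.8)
The plan is to construct an inverse to $\blank^+$ by a Koszul duality construction and then verify the two composites are equivalent to the respective identities, leveraging Theorem \ref{thm:poincare-intro} to keep us inside the right categories and Theorem \ref{thm:koszul-duality-e-n} to handle everything locally. First I would check that $\blank^+$ is well-defined: given $A\in\Alg_M(\cat{A})_\positive$, the precoalgebra $A^+$ satisfies excision by Theorem \ref{thm:poincare-intro}, and I must verify that $A^+$ is copositive and locally constant, so that it genuinely lands in $\Coalg_M(\cat{A})_\positive$. Copositivity and local constancy should both be local properties, so they should follow from the corresponding local statement: the stalk of $A^+$ at a point is the $E_n$-Koszul dual of the stalk of $A$, and Theorem \ref{thm:koszul-duality-e-n} tells us precisely that the Koszul dual of a positive $E_n$-algebra is a copositive $E_n$-coalgebra.

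Next I would build the candidate inverse functor. The natural guess is the compactly supported factorisation \emph{cohomology} construction $\blank_+$ dual to $\blank^+$, sending a precoalgebra $C$ to the prealgebra $C_+$ whose stalks are the $E_n$-Koszul duals (in the coalgebra-to-algebra direction) of the stalks of $C$. I would establish the analogue of Theorem \ref{thm:poincare-intro} for coalgebras — that $C_+$ satisfies excision when $C$ is copositive — so that $\blank_+$ carries $\Coalg_M(\cat{A})_\positive$ into $\Alg_M(\cat{A})_\positive$. Here the key input is again the local equivalence of Theorem \ref{thm:koszul-duality-e-n}, now read in the reverse direction, together with the fact that excision lets one reconstruct global values from local data by gluing.

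The heart of the argument is then showing $(\blank^+)_+\equiv\id$ and $(\blank_+)^+\equiv\id$. My strategy is to reduce these global identities to the local equivalences already established in \cite{local}. Because both functors are characterised by excision, an object in either category is determined up to equivalence by its restrictions to a basis of Euclidean opens together with the gluing data encoded by the $E_1$-module structures $A(N)$ appearing in the excision maps. On each Euclidean chart the composite $(\blank^+)_+$ restricts to the double Koszul dual of an $E_n$-algebra, which is equivalent to the identity by Theorem \ref{thm:koszul-duality-e-n}; the same holds for $(\blank_+)^+$. The remaining task is to assemble these pointwise (or chartwise) equivalences into a natural equivalence of functors on $M$, compatibly with the gluing.

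I expect the main obstacle to be precisely this last globalisation step: promoting the local biduality equivalences into a coherent global natural transformation that respects excision and the module structures along every cutting hypersurface $N$. The difficulty is that excision is a condition phrased in terms of a tensor product $A(V)\tensor_{A(N)}A(W)$, and one must check that the Koszul duality construction intertwines these tensor-product decompositions with the cotensor decompositions on the coalgebra side, coherently over the whole poset of opens. Handling the naturality and higher coherence of this intertwining — rather than the pointwise statement, which is essentially Theorem \ref{thm:koszul-duality-e-n} — is where the real work lies, and it is what forces the completeness and positivity hypotheses to be used in an essential way.
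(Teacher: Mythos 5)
Your proposal is correct and takes essentially the same route as the paper: the inverse is compactly supported cohomology applied to coalgebras in $\cat{A}^\op$ (using copositivity), its excision follows by the dual of the argument for Theorem \ref{thm:poincare-duality-complete}, and inverseness is verified at the level of stalks via Theorem \ref{thm:koszul-duality-e-n}. The globalisation step you flag as the main obstacle is precisely what the paper dispatches by checking on stalks: the canonical comparison maps come from the constructions themselves, equivalences between objects satisfying excision and local constancy are detected stalkwise, and there the composite is the iterated $E_1$-Koszul biduality of \cite[Sections 4.0, 4.1]{local}.
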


\subsection{Outline}
\textbf{Section \ref{sec:terminology-notation}} is for introducing
conventions which are used throughout the main body.

In \textbf{Section \ref{sec:generalise-apply}}, we discuss excision.

In \textbf{Section \ref{sec:compactly-supported-homology}}, we
introduce compactly supported factorisation homology, and investigate
its symmetric monoidal functoriality in manifolds.

In \textbf{Section \ref{sec:duality}}, we formulate the Poincar\'e
duality for a factorisation algebra as a relation between the
compactly supported factorisation homology and the Koszul dual of the
factorisation algebra.
We also investigate this for factorisation algebras in the situation
opposite to Lurie's.
This is another situation where problem about sifted limits does not
arise.

We then prove the
Poincar\'e duality theorem for complete factorisation algebras.
We compare the cases of (globally) constant algebras of this theorem,
with an implication of the Morita theoretic functoriality of the
Koszul duality \cite[Theorem 4.22]{local} on topological field
theories.
We also discuss particular algebras to which the theorem applies,
including one which is of interest from quantum field theory in
Costello--Gwilliam's framework \cite{cg} (see
\textbf{Theorem \ref{thm:sheaf-of-lie}} for the result).

\subsection*{Acknowledgment}
This paper is based on part of the author's thesis \cite{thesis}.
I am particularly grateful to my advisor Kevin Costello for his
extremely patient guidance and continuous encouragement and support.
My contribution through this work to the subject of factorization
algebra can be understood as technical work of combining the ideas and
work of the pioneers such as Jacob Lurie, John Francis, and Kevin.
I am grateful to those people for their work, and for making
their ideas accessible.
Special thanks are due to John for detailed comments and
suggestions on the drafts of my thesis, which were essential for
many improvements of both the contents and exposition.
Many of those improvements were inherited by this paper.
I am grateful to him also for
explaining to me the relation between the Poincar\'e duality theorems
in Lurie's context and in its opposite context.
I am grateful to Owen Gwilliam, Josh Shadlen, and Yuan Shen for their
continuous encouragement.

\section{Terminology, notations and conventions}
\label{sec:terminology-notation}
\setcounter{subsection}{-1}
\setcounter{equation}{-1}
\subsection{Category theory}
\label{sec:category-theory}

As in \cite{descent,local}, we adopt the
following convention for the terminology.

By a \kore{$1$-category}, we always mean an \emph{infinity}
$1$-category.
We often call a $1$-category (namely an infinity $1$-category) simply
a \kore{category}.
A category with discrete sets of morphisms (namely, a ``category''
in the more traditional sense) will be called $(1,1)$-category, or a
\emph{discrete} category.

In fact, all categorical and algebraic terms will be used in
\emph{infinity} ($1$-) categorical sense without further notice.
Namely, categorical terms are used in the sense enriched in the
\emph{infinity} $1$-category of spaces, or equivalently, of infinity
groupoids, and algebraic terms are used freely in the sense
generalised in accordance with the enriched categorical structures.

For example, for an integer $n$, by an \emph{$n$-category}
(resp.~\emph{infinity} category), we mean an \emph{infinity}
$n$-category (resp.~infinity infinity category).
We also consider multicategories.
By default, multimaps in our multicategories will form
a \emph{space} with all higher homotopies allowed.
Namely, our ``\emph{multicategories}'' are ``infinity operads'' in the
terminology of Lurie's book \cite{higher-alg}.

\begin{remark}
We usually treat a space relatively to the structure of the standard
(infinity) $1$-category of spaces.
Namely, a ``\emph{space}'' for us is usually no more than an object of
this category.
Without loss of information, we shall freely identify a space in this
sense with its fundamental infinity groupoid, and call it also a
``\emph{groupoid}''.
Exceptions in which the term ``space'' means not necessarily
this, include a ``Euclidean space'', the ``total space'' of a fibre
bundle, etc., in accordance with the common customs.
\end{remark}

\smallskip

The following notations and terminology will be used as in
\cite{descent,local}.

We use the following notations for over and under categories.
Namely, if $\cat{C}$ is a category and $x$ is an object of $\cat{C}$,
then we denote the category of objects $\cat{C}$ lying over $x$, i.e.,
equipped with a map to $x$, by $\cat{C}_{/x}$.
We denote the under category for $x$, in other words,
$((\cat{C}^\op)_{/x})^\op$, by $\cat{C}_{x/}$.

More generally, if a category $\cat{D}$ is equipped with a functor to
$\cat{C}$, then we define
$\cat{D}_{/x}:=\cat{D}\times_{\cat{C}}\cat{C}_{/x}$, and similarly for
$\cat{D}_{x/}$.
Note here that $\cat{C}_{/x}$ is mapping to $\cat{C}$ by the functor
which forgets the structure map to $x$.
Note that the notation is abusive in that the name of the functor
$\cat{D}\to\cat{C}$ is dropped from it.
In order to avoid this abuse from causing any confusion, we shall use
this notation only when the functor $\cat{D}\to\cat{C}$ that we are
considering is clear from the context.

\bigskip

By the \kore{lax colimit} of a diagram of categories indexed by a
category $\cat{C}$, we mean the Grothendieck construction.
We choose the variance of the laxness so the lax colimit projects to
$\cat{C}$, to make it an op-fibration over $\cat{C}$, rather
than a fibration over $\cat{C}^\op$.
(In particular, if $\cat{C}=\cat{D}^\op$, so the functor is
contravariant on $\cat{D}$, then the familiar fibred category over
$\cat{D}$ is the \emph{op}-lax colimit over $\cat{C}$ for us.)
Of course, we can choose the variance for lax \emph{limits} compatibly
with this, so our lax colimit generalises to that in any $2$-category.

\subsection{Symmetric monoidal structure}
\label{sec:symmetric-monoidal}

The following explicit definition of a symmetric
monoidal category will be used.
Namely, we follow To\"en \cite{toen-tannaka} to define a symmetric
monoidal (infinity $1$-) category as an infinity $1$-categorical
generalisation of Segal's \emph{$\Gamma$-category}
\cite{segal-cohomology}, in accordance with Lurie's book
\cite[Definitions 2.0.0.7, 2.1.3.7]{higher-alg}.
Namely, let $\Fin_*$ denote the category of pointed finite sets
(equivalently, the opposite of Segal's category $\Ggamma$).
For a finite set $S$, denote by $S_+=S\kocoprod\{*\}$ the object of
$\Fin_*$ obtained by externally adding a base point ``$*$'' to $S$.
\begin{definition}
Let $\Cat$ denote the ($2$-)category of categories (with some fixed
limit for their sizes).
Let $\cat{C}$ be a category, i.e., an object of $\Cat$.
Then a \kore{pre-$\Gamma$-structure} on $\cat{C}$ consists of a
functor $\cat{A}\colon\Fin_*\to\Cat$ together with an equivalence
$\cat{C}\equivwith\cat{A}(\sphere^0)$, where $\sphere^0$ denotes the two pointed
set with one base point.

A pre-$\Gamma$-structure as above is a \kore{symmetric monoidal}
structure if for every finite set $S$ (including the case $S=\kara$),
Segal's map
\begin{equation}\label{eq:segal-map}
\cat{A}(S_+)\longto\cat{A}(\sphere^0)^S
\end{equation}
is an equivalence \cite[Definition 2.1]{segal-cohomology}.

A \kore{pre-$\Gamma$-category} is a category equipped with a
pre-$\Gamma$-structure, or equivalently, just a functor
$\Fin_*\to\Cat$.
It is a \kore{symmetric monoidal} category if the
pre-$\Gamma$-structure is in fact a symmetric monoidal structure.

The category of \emph{maps} (``\kore{symmetric monoidal functors}'')
between symmetric monoidal categories is by definition, the category
of maps of the functors on $\Fin_*$.
\end{definition}

Let $\cat{A}$ be a symmetric monoidal category (i.e., a
pre-$\Gamma$-category $\Fin_*\to\Cat$ satisfying the required
condition).
Then through the equivalence \eqref{eq:segal-map}, the map
\begin{equation}\label{eq:partial-operation}
\cat{A}(S_+)\longto\cat{A}(\sphere^0)
\end{equation}
induced from the map which collapses
$S$ to the (non-base) point can be considered as a functor
\begin{equation}\label{eq:monoidal-multiplication}
\cat{C}^S\longto\cat{C},
\end{equation}
where $\cat{C}$ is the underlying category $\cat{A}(\sphere^0)$.
These can be considered as ``multiplication'' operations on $\cat{C}$
which results from the symmetric monoidal structure.
In fact, since $\cat{A}(S_+)$ can be replaced by $\cat{C}^S$, so
Segal's maps will be the identities, a symmetric monoidal structure on
$\cat{C}$ amounts to the operations \eqref{eq:monoidal-multiplication}
together with suitable compatibility data among them.

In our notation, we often use the same symbol for a symmetric monoidal
category and its underlying category, when this seems to cause no
confusion.
On the other hand, the name for a symmetric monoidal structure will
often be something like ``$\tensor$'', in which case the name of the
multiplication operation \eqref{eq:monoidal-multiplication} will be
$\Tensor_S$.
(If the operations already have names such as $\Tensor_S$, we will
name the symmetric monoidal structure after them, so the stated
rule will apply.)

\bigskip

We shall also need to consider \emph{partially defined} monoidal
structures.
The following simple definition will suffice for our purposes.
\begin{definition}
A \kore{partial} symmetric monoidal category is a
pre-$\Gamma$-category $\cat{A}$ for which Segal's maps
\eqref{eq:segal-map} are fully faithful functors.

The category of \emph{maps} (``\kore{symmetric monoidal functors}'')
of partial symmetric monoidal categories is by definition, the
category of maps of the functors on $\Fin_*$.
\end{definition} 

In a partial symmetric monoidal category $\cat{A}$,
\eqref{eq:partial-operation} is a multiplication operation
defined only on the full subcategory $\cat{A}(S_+)$ of $\cat{A}^S$.
We shall often denote $\cat{A}(S_+)$ by $\cat{A}^{(S)}$, so Segal's
map will be $\cat{A}^{(S)}\into\cat{A}^S$, while the multiplication
will be $\cat{A}^{(S)}\to\cat{A}$.

\subsection{Manifolds and factorisation algebras}
\label{sec:manifold-algebra}

In this paper, every \kore{manifold without boundary} is assumed to
be the interior of a \emph{specified} smooth compact manifold with
(possibly empty) boundary.
Namely, such a manifold $U$ comes \emph{equipped} with a smooth
compact closure which will be usually denote by $\close{U}$.
By an \kore{open embedding} $U\into V$ of such manifolds, where $U$
and $V$ are specified as the interior of compact $\close{U}$ and
$\close{V}$ respectively, we mean an open embedding $U\into V$ in the
usual sense which extends to a smooth immersion
$\close{U}\to\close{V}$.
By definition an \kore{open submanifold} is a manifold embedded in
this sense.

There will be a switch in notations from \cite{descent} accordingly.
Firstly, for a manifold $M$ without boundary, $\Open(M)$ introduced
in \cite[Section2.0]{descent}
will now denote the open submanifolds in the above sense.
Note that by \cite[Corollary 2.39, Example 2.40]{descent},
this class of manifolds are sufficient for understanding locally
constant factorisation algebras.
From the standpoint of the original conventions, this means that we
work only with prealgebras which are left Kan extensions of
their restriction to this class of manifolds (but the category of
locally constant factorisation algebras remains unchanged).

Similarly, in this paper, $\Disk(M)$ and $\Dis(M)$ used in
\cite[Section 2.0]{descent}
following Lurie \cite{higher-alg}, contain as objects (disjoint unions
of) disks which are diffeomorphic to the interior of the standard
closed disk, and is embedded in $M$ in the above sense.
All results of \cite{descent} are valid under these switched
notations.

\bigskip

In this paper, we assume as in \cite{descent} that the target category
$\cat{A}$ of prealgebras has sifted colimits, and the monoidal
multiplication functor on $\cat{A}$ preserves sifted colimits
variable-wise.
Equivalently, the monoidal multiplication should preserve sifted
colimits for all the variables at the same time.

By a (symmetric) \kore{monoidal} structure on a \emph{stable}
category, we mean a (symmetric) monoidal structure on the underlying
category for which the monoidal multiplication functors are
\emph{exact} in each variable.
Note that this and the above implies that when we consider a symmetric
monoidal stable category $\cat{A}$ for the target of prealgebras,
$\cat{A}$ is closed under all colimits, and the monoidal
multiplication functors preserve all colimits variable-wise.

\bigskip

For all other notations and terminology about factorisation algebras,
we follow \cite{descent}.

\section{Excision property of a factorisation algebra}
\label{sec:generalise-apply}
\setcounter{subsection}{-1}
\setcounter{equation}{-1}

\subsection{Constructible algebra on a closed interval}

Let $I$ be a closed interval.
Let $\Open(I)$ be the poset of open subsets of $I$.
This has a partially defined symmetric monoidal structure
given by taking disjoint union.
A \kore{prealgebra} on $I$ is defined to be a symmetric monoidal
functor on $\Open(I)$.

As a manifold with boundary, $I$ has a natural stratification given by
$\boundary I\sub I$.
We shall define the class of prealgebras on $I$ which we shall call
\emph{constructible} factorisation algebras, where the
constructibility is with respect to the mentioned stratification of $I$.

Let $\Disk(I)$ be the full subcategory of the poset
$\Open(I)$ consisting of objects of $\Disk(I-\boundary I)$ and collars
of either point of $\boundary I$.
This is a symmetric multicategory by inclusion of disjoint unions.

Let us say that a functor defined on the underlying poset (of
``colours'') of $\Disk(I)$ is \kore{constructible} if it inverts
morphisms from $\Disk(I-\boundary I)$ and morphisms between collars of
points of $\boundary I$.

\begin{definition}
Let $\cat{A}$ be a symmetric monoidal category.

Then a prealgebra on $I$ in $\cat{A}$ is said to be
\kore{constructible} (with respect to the stratification of $I$ as a
manifold with boundary) if its restriction to $\Disk(I)$ is
constructible.
Let us denote by $\Pre\Alg_I(\cat{A})$ the category of
\emph{constructible} prealgebras on $I$.

A \kore{constructible factorisation algebra} (or just
``\kore{constructible algebra}'') on $I$ is an algebra on $\Disk(I)$
whose underlying functor on colours is constructible.
The category of constructible algebras on $I$ in $\cat{A}$ will be
denoted by $\Alg_I(\cat{A})$.
\end{definition}

In fact, we can identify the category of constructible algebra on
$I$ with a right localisation of the category of constructible
prealgebras consisting of those prealgebras whose underlying functor
is a left Kan extension from a certain full subcategory, denoted by
$\Dis(I)$, of $\Open(I)$.
Namely, this full subcategory $\Dis(I)$ is the smallest full
subcategory of $\Open(I)$ containing $\Disk(I)$, and is closed under
the partial monoidal structure of $\Open(I)$, the disjoint union
operation.

In fact, the only interesting open submanifold of $I$ is $I$ itself,
and we have the following.

\begin{lemma}
Let $A$ be a constructible prealgebra on $I$.
Then $A$ is a (constructible) factorisation algebra if and only if the
map $\colim_{\Dis(I)}A\to A(I)$ is an equivalence.
\end{lemma}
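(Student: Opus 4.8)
The plan is to read the claim off the identification recorded immediately above the statement, according to which a constructible prealgebra $A$ on $I$ is a factorisation algebra precisely when its underlying functor on $\Open(I)$ is the left Kan extension of its restriction to $\Dis(I)$ along the inclusion $\Dis(I)\into\Open(I)$. Unwinding this condition pointwise, $A$ is a factorisation algebra if and only if for every open submanifold $U\in\Open(I)$ the canonical comparison map
\[
\colim_{\Dis(I)_{/U}}A\longto A(U)
\]
is an equivalence. So the task reduces to showing that this whole family of conditions collapses to the single condition at $U=I$.

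First I would classify the objects of $\Open(I)$. Since every open submanifold is by our convention the interior of a compact manifold, its closure is compact and hence it has only finitely many connected components. Each connected open subset of $I=[a,b]$ is one of: an open interval in the interior (a disk), a half-open collar $[a,d)$ of the boundary point $a$, a half-open collar $(c,b]$ of the boundary point $b$, or $I$ itself; moreover the last case is the unique connected open set containing \emph{both} boundary points. Consequently every \emph{proper} open submanifold $U\subsetneq I$ is a finite disjoint union of objects of $\Disk(I)$, and therefore lies in $\Dis(I)$; conversely $I$ itself, being connected and carrying both boundary points, is not such a disjoint union, so it is the unique object of $\Open(I)$ outside $\Dis(I)$.

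The comparison map is then automatic away from $I$. Indeed, for any $U\in\Dis(I)$ the identity of $U$ is a terminal object of the slice poset $\Dis(I)_{/U}$, so the indexing category of the colimit has a terminal object and $\colim_{\Dis(I)_{/U}}A\to A(U)$ is an equivalence with no hypothesis on $A$; in particular the Kan extension condition holds at every proper $U\subsetneq I$. It then remains to examine $U=I$. As $I$ is the top element of the poset $\Open(I)$, every object of $\Dis(I)$ is contained in $I$, whence $\Dis(I)_{/I}=\Dis(I)$ and the comparison map at $I$ is exactly $\colim_{\Dis(I)}A\to A(I)$. Combining the two cases, $A$ is a factorisation algebra if and only if this last map is an equivalence, which is the assertion.

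The only step requiring genuine care is the classification of $\Open(I)$: one must verify that proper open sets have finitely many components (so that they are built from the generators of $\Dis(I)$ under \emph{finite} disjoint union, which is all the partial monoidal structure provides) and that $I$ is the sole connected open set carrying both boundary points. Everything else is formal once the left Kan extension characterisation from the preceding paragraph is in hand.
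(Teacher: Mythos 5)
Your proof is correct and is essentially the paper's own argument: the paper gives no proof beyond the preceding remark that ``the only interesting open submanifold of $I$ is $I$ itself,'' and your classification of $\Open(I)$ (proper opens have finitely many components and hence lie in $\Dis(I)$, where the slice $\Dis(I)_{/U}$ has a terminal object, while $I$ alone lies outside and has $\Dis(I)_{/I}=\Dis(I)$) is exactly the elaboration of that remark. The one point you rightly flag --- finiteness of components, so that proper opens are \emph{finite} disjoint unions of disks and collars --- is indeed what the paper's conventions on open submanifolds are implicitly supplying.
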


We will next see that this colimit can be calculated as a tensor
product in the following way.
Given a constructible (pre-)algebra on $I$, note that the values of
$A$ on any object of $\Disk(I-\boundary I)$ are canonically
equivalent to each other (since they are all canonically equivalent to
$A(I-\boundary I)$).
Similarly, the values of $A$ on any collar of left end point of $I$ is
canonically equivalent to each other (since these collars are
totally ordered), and similarly around the right end point.
Denote these objects by $B$, $K$, $L$ respectively.
Then we want to see in particular, that there functorially exists a
structure of associative algebra on $B$, a structure of its right
module on $K$, a structure of its left module on $L$ for which there
is a natural map $K\tensor_BL\to A(I)$.

\begin{remark}
Moreover, there will be a natural right $B$-module map $B\to K$, and a
left $B$-module map $B\to L$.
Naturally, this can be understood as that $K$ (resp.~$L$) is an
$E_0$-algebra in the category of right (resp.~left) $B$-modules.
(This is a factorisation algebra on a point which associates the
object $B$ to the empty set.)

There is an obvious way to modify the definition of a constructible
prealgebra so that these extra structures will not come with the
structure of $A$.
\end{remark}

In order to do this, we extend isotopy invariance result from
\cite[Section 2.3]{descent}
to the present (actually very simple) context.
Let $\Elu_I$ denote the multicategory which has the same objects as
$\Disk(I)$, but the space of multimaps $\{U_i\}_i\to V$ in $\Elu_I$ is
the space formed by pairs consisting of an embedding
$f\colon\coprod_iU_i\into V$ and an isotopy of each $U_i$ in $I$ from
the defining inclusion $U_i\into I$ to $f\resto{U_i}$.

For distinction between a multicategory and its underlying category
(of ``colours''),
let us denote by $\Elu_{1,I}$ the underlying category of $\Elu_I$.
Then $\Elu_{1,I}$ is equivalent to the poset of subsets of $\boundary I$
consisting of \emph{at most} one element.

There is a morphism $\Disk(I)\to \Elu_I$ of multicategories, and
clearly, the underlying functor $\Disk_1(I)\to \Elu_{1,I}$, where we
have put the subscript $1$ for distinction, is a localisation
inverting inclusions in $\Disk_1(I-\boundary I)$ and inclusions of
collars of a point of $\boundary I$ (namely, those morphisms which are
required to be inverted by a constructible prealgebra).
Note that this is how we found the objects $B$, $K$, $L$ above.

In particular, if $A$ is a prealgebra on $I$, then its
underlying functor extends to a functor on $\Elu_{1,I}$ if and only if
$A$ is constructible.
Moreover, the extension is unique.

$\Elu_I$ is slightly more involved than $\Elu_{1,I}$, but it is still
homotopically discrete, and here is a complete description of it:
Given a functor on $\Elu_{I,1}$, a structure on it of an algebra on
$\Elu_I$, is exactly a structure of associative algebra on $B$,
a structure of right $B$-module on $K$, a structure of left $B$-module
map on the map $B\to K$, a structure of a left $B$-module on $L$, a
structure of a left $B$-module map on the map $B\to L$.

With this description available, a direct inspection shows that the
restriction through the morphism $\Disk(I)\to\Elu_I$ induces an
equivalence between the category of constructible algebra on $I$, and
the category of algebras over $\Elu_I$.
(For example, we could consider as an intermediate step, a symmetric
multicategory defined similarly to $\Elu_I$, but using only isotopies
(and their isotopies etc.)~which are piecewise linear.)

Let us now introduce a similarly `localised' version of $\Dis(I)$.
Namely we define a suitable variant of Lurie's $\ccat{D}(M)$
\cite{higher-alg}.

Let $\Man^1$ be the following category.
Namely, its object is a $1$-dimensional manifold with boundary which
is a finite disjoint union (coproduct) of open or half-open intervals.
The space of morphisms is the space of embeddings each of which sends
any boundary point to a boundary point.

Define $\ccat{D}(I):=\Man^1_{/I}$.
Its objects are open submanifolds of $I$ which are homeomorphic to a
finite disjoint union of disks.
The space of maps $U\to V$ is the space formed by embeddings $f\colon
U\into V$ together with an isotopy from the defining
inclusion $U\into I$ to $f\colon U\into I$.

\begin{lemma}\label{lem:disj-cofinal-d}
The functor $\Dis(I)\to\ccat{D}(I)$ is cofinal.
\end{lemma}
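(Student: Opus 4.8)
The plan is to apply Joyal's cofinality criterion (the $\infty$-categorical form of Quillen's Theorem~A): the functor $\Dis(I)\to\ccat{D}(I)$ is cofinal if and only if, for every object $V$ of $\ccat{D}(I)$, the slice category
\[
\cat{E}_V:=\Dis(I)\cross_{\ccat{D}(I)}\ccat{D}(I)_{V/}
\]
is weakly contractible. An object of $\cat{E}_V$ is an object $U$ of $\Dis(I)$ together with a morphism $V\to U$ in $\ccat{D}(I)$, that is, an embedding $f\colon V\into U$ equipped with an isotopy in $I$ from the defining inclusion $V\into I$ to the composite $V\into U\into I$; a morphism of $\cat{E}_V$ is an inclusion $U\sub U'$ in $\Dis(I)$ compatible with this data. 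Note that, since a finite disjoint union of interior disks and endpoint collars is precisely an object of $\ccat{D}(I)$, the two categories agree on objects, so the content is entirely about comparing inclusions with isotopy-enriched embeddings.

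The key point, which I would isolate as a lemma extending the isotopy-invariance discussion of \cite[Section 2.3]{descent} to the present one-dimensional setting, is a rigidity statement: because $I$ is one-dimensional, the space of embeddings of a fixed disjoint union of (half-)open intervals into $I$ that are isotopic to the defining inclusion is contractible. Concretely, one straightens such an embedding to an affine one, using convexity of the interval together with the facts that the components cannot cross, and that orientations and the boundary conditions (a collar of an endpoint of $I$ can only map into a collar of the same endpoint) are preserved along an isotopy. Granting this, every morphism $V\to U$ occurring in $\cat{E}_V$ represents the single ``standard'' homotopy class of map $V\to U$, and the mapping spaces of $\ccat{D}(I)$ relevant to $\cat{E}_V$ are contractible whenever nonempty. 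Consequently $\cat{E}_V$ is equivalent to the partially ordered set
\[
\{\,U\in\Dis(I):\text{the inclusion }V\into I\text{ factors, up to isotopy, through }U\,\}
\]
ordered by inclusion.

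I would then show that this poset is directed, hence weakly contractible. It is nonempty, since $V$ itself belongs to it with the constant isotopy. Given two members $U_1,U_2$, their union $U_1\union U_2$ is again a finite disjoint union of (half-)open intervals---in dimension one a finite union of intervals is such a disjoint union, and near each endpoint of $I$ at most one collar occurs---so $U_1\union U_2$ lies in $\Dis(I)$, contains both $U_i$, and admits the standard factorisation of $V$ by the rigidity above. It is therefore a common upper bound. A directed poset has weakly contractible nerve, so each $\cat{E}_V$ is weakly contractible and the functor is cofinal.

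The main obstacle is the rigidity lemma underlying the second paragraph: one must verify carefully that, in the presence of the boundary of $I$ and of the collar conditions built into $\Man^1$ and $\ccat{D}(I)$, the relevant spaces of embeddings isotopic to the standard inclusion are genuinely contractible, and that the resulting identification of $\cat{E}_V$ with a poset is correct at the level of all higher coherences, not merely of objects and $1$-morphisms. Once this homotopical rigidity of $\ccat{D}(I)$ is established, the directedness argument is routine; indeed, the same straightening makes precise the earlier assertion that $\ccat{D}(I)$, like $\Elu_I$, is homotopically discrete.
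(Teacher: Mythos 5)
Your overall framework---Joyal's generalisation of Quillen's Theorem A, reducing the lemma to weak contractibility of the slices $\cat{E}_V$---is the same first step as the paper's, and your observation that the morphism spaces of $\cat{E}_V$ are empty or contractible (so that $\cat{E}_V$ is equivalent to a poset) can be made correct. But the directedness claim, which is where all the content of your argument lies, is false. The union of two objects of $\Dis(I)$ need not be an object of $\Dis(I)$: take $I=[0,1]$, $V=(0.4,0.6)$, $U_1=[0,0.7)$ a collar of the left endpoint, and $U_2=(0.3,1]$ a collar of the right endpoint. Both contain $V$, so both define objects of $\cat{E}_V$ (with constant isotopies), yet $U_1\union U_2=I$, and $I$ itself is \emph{not} an object of $\Dis(I)$: it is connected and compact with two boundary points, hence is neither a disk of $I-\boundary I$ nor a collar of a single endpoint, while every object of $\Dis(I)$ is a finite disjoint union of such. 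Since the morphisms of $\Dis(I)\sub\Open(I)$ are inclusions, a common upper bound of $U_1$ and $U_2$ in $\cat{E}_V$ would have to be an object of $\Dis(I)$ containing $U_1\union U_2=I$; none exists. Your parenthetical remark that ``near each endpoint of $I$ at most one collar occurs'' only handles two collars of the \emph{same} endpoint merging; it does not address collars of the two \emph{different} endpoints becoming connected through the interior. So the slices are genuinely not directed (not filtered), and no rigidity lemma can repair this: their weak contractibility is a real homotopy-theoretic fact, not an order-theoretic one.

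This is precisely why the paper's proof takes a different route after invoking Theorem A: it identifies the classifying space of the slice $\Dis(I)_{D/}$ with the homotopy fibre of $\colim_{E\in\Dis(I)}\Emb(D,E)\to\Emb(D,I)$ over the defining embedding, then fibres these embedding spaces over configuration spaces (splitting off germs of charts), and finally proves that
$\colim_{E\in\Dis(I)}\Conf(\pi_0(D_0),E-\boundary E)\to\Conf(\pi_0(D_0),I-\boundary I)$
is an equivalence by the generalised Seifert--van Kampen theorem, following the proof of \cite[Proposition 5.3.2.13 (1)]{higher-alg}. You would need to replace your third paragraph by an argument of this kind. A secondary inaccuracy worth flagging: the mapping spaces of $\ccat{D}(I)$ ``relevant to $\cat{E}_V$'' are not contractible when nonempty---if $U$ has two components each of which can receive a component of $V$ compatibly with the isotopy back to the defining inclusion, then $\ccat{D}(I)$ has several essentially different morphisms $V\to U$---so the poset your $\cat{E}_V$ is equivalent to is the finer poset of pairs $(U,c)$, where $c$ records which component of $U$ each component of $V$ is sent to; but this is a minor point compared with the failure of directedness.
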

\begin{proof}
Proof is similar to the proof of \cite[Proposition 5.3.2.13
(1)]{higher-alg}.

We apply Joyal's generalisation of Quillen's theorem A \cite{topos}.

Let $D\in\Man^1$ with an embedding $i\colon D\into I$ be
defining an object of $\ccat{D}(I)$.
(Denote the object simply by $D$.)
Then we want to prove that the category $\Dis(I)_{D/}$ has
contractible classifying space.

In other words, we want to prove that the category
\[
\laxcol_{E\in\Dis(I)}\Fibre\left[\Emb(D,E)\longto\Emb(D,I)\right],
\]
fibre taken over $i$, has contractible classifying space, which is
the colimit of the same diagram (rather than the lax colimit in the
$2$-category of categories), and thus equivalent to
\[
\Fibre\left[\colim_{E\in\Dis(I)}\Emb(D,E)\longto\Emb(D,I)\right].
\]
In this last step, we have used the standard equivalence between the
category of spaces over $\Emb(D,I)$, and the category of local
systems of spaces on $\Emb(D,I)$ (to be elaborated on in Remark
below for completeness).

In fact, we can prove that the map
$\colim_{E\in\Dis(I)}\Emb(D,E)\to\Emb(D,I)$ is an
equivalence as follows.

Let $D_0$ be the union of the components of $D$ which are open
intervals.
Choose a homeomorphism $D_0\equivwith S\times\R^1$ for a finite set
$S$.
In particular, we have picked a point in each component of $D_0$,
corresponding to the origin in $\R^1$, together with the germ of a
chart at the chosen points.
Then, given an embedding $D_0\into U:=E-\boundary E$, where
$E\in\Dis(I)$, restriction of it to the germs of charts at the chosen
points gives us an injection $S\into U$ together with germs of charts
in $U$ at the image of $S$.
This defines a homotopy equivalence of
$\Emb(D_0,U)$($\equivwith\Emb(D,E)$) with the space of germs of
charts around distinct points in $U$, labeled by $S$.

Furthermore, this space is fibred over the configuration
space $\Conf(S,U):=\Emb(S,U)/\Aut(S)$, with fibres equivalent to
$\mathrm{Germ}_0(\R^1)\wreath\Aut(S)$, where $\mathrm{Germ}_0(\R^1)$
is from \cite[Notation 5.2.1.9]{higher-alg}.

It follows that the task has been reduced to proving that the map
\[
\colim_{E\in\Dis(I)}\Conf(\pi_0(D_0),E-\boundary
E)\longto\Conf(\pi_0(D_0),I-\boundary I)
\]
is an equivalence.

However, for any finite set $S$, the cover determined by the functor
$E\mapsto\Conf(S,E-\boundary E)\sub\Conf(S,I-\boundary I)$ satisfies
the hypothesis for the generalised Seifert--van Kampen theorem
\cite{higher-alg}.
\end{proof}

The following is a side remark on a result we have used.
\begin{remark}\label{rem:homotopy-theory}
In the proof, we have used an equivalence between the category of
spaces over a space $X$, and the category of local systems of spaces
on $X$.
This is simply a special case of a standard fact on Grothendieck
fibrations.
Indeed, spaces are also known as groupoids, so ``a space over $X$''
is a rephrasing of ``a category fibred over $X$ in groupoids''
(both consist of identical data with equivalent required properties),
and every functor over $X$ preserves Cartesian maps.
(Note that every functor with target a groupoid is a fibration, and a
map in the source is Cartesian if and only if it is an equivalence.)
\end{remark}

Let us denote by $\ccat{D}(I)_{\boundary I}$ the full subcategory of
$\ccat{D}(I)$ consisting of objects each of which contains (as a
submanifold of $I$) both points of $\boundary I$.
Note that the inclusion $\ccat{D}(I)_{\boundary I}\to\ccat{D}(I)$ is
cofinal.

\begin{lemma}\label{lem:equiv-to-simplices}
The functor $\ccat{D}(I)_{\boundary I}\to\Simp^\op$ given by
$U\mapsto\pi_0(I-U)$ (with the order inherited from the order in $I$)
is an equivalence.
\end{lemma}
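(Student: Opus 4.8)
The plan is to verify that $F\colon\ccat{D}(I)_{\boundary I}\to\Simp^\op$, $U\mapsto\pi_0(I-U)$, is essentially surjective and homotopically fully faithful; since $\Simp^\op$ is a $(1,1)$-category, the latter amounts to showing that each mapping space of $\ccat{D}(I)_{\boundary I}$ is homotopically discrete and that $F$ induces a bijection on $\pi_0$. Essential surjectivity is immediate. An object $U$ of $\ccat{D}(I)_{\boundary I}$ must contain both boundary points, and since a component is an open or half-open interval while $I$ is closed, $U$ is necessarily a disjoint union of a left collar of $a$, a right collar of $b$, and finitely many open intervals in the interior. If $U$ has $k+2$ components, then $I-U$ consists of the $k+1$ gaps between consecutive components, so $\pi_0(I-U)$ is a $(k+1)$-element totally ordered set $\cong[k]$; conversely every $[n]$ is realised this way.

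For full faithfulness, fix $U,V$. By definition the mapping space is the homotopy fibre
\[
\Map_{\ccat{D}(I)}(U,V)\equivwith\Fibre\left[\Emb(U,V)\longto\Emb(U,I)\right]
\]
over the defining inclusion $\iota_U$, the map being postcomposition with $\iota_V\colon V\into I$. I would analyse $\Emb(U,V)$ by the germ-of-charts reduction used in the proof of Lemma \ref{lem:disj-cofinal-d}: an embedding is determined up to contractible choice by the images of the germs of charts at marked points of the components of $U$, so $\Emb(U,V)$ is homotopy equivalent to a configuration-type space of labelled germs in $V$. The boundary-to-boundary condition forces the collar components of $U$ to land in the collar components of $V$ with $a\mapsto a$ and $b\mapsto b$, and passing to the fibre over $\iota_U$ fixes the isotopy class in $I$. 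Because an isotopy of embeddings of a $1$-manifold into $I$ cannot permute or reverse the left-to-right order of the components, each component of the fibre is contractible, and the set of components is the set of combinatorial types of such embeddings.

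It remains to match these combinatorial types with $\Hom_\Simp(\pi_0(I-V),\pi_0(I-U))$, compatibly with $F$. Given an embedding isotopic to the inclusion we have $I-V\sub I-U$ up to the isotopy, so each gap of $V$ lies in a unique gap of $U$; this assignment is order-preserving and gives a morphism $g\colon\pi_0(I-V)\to\pi_0(I-U)$ in $\Simp$, i.e.\ a morphism $F(U)\to F(V)$ in $\Simp^\op$, with $g_{\psi\phi}=g_\phi\circ g_\psi$ as required. Two gaps of $V$ may share a gap of $U$ (when a component of $V$ fills part of that gap), yielding degeneracies, while a gap of $U$ may be wholly covered by $V$, yielding faces; running over all combinatorial types shows $g$ realises every order-preserving map and that distinct types give distinct $g$. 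Hence $F$ induces a bijection $\pi_0\Map_{\ccat{D}(I)_{\boundary I}}(U,V)\cong\Hom_\Simp(\pi_0(I-V),\pi_0(I-U))$, completing full faithfulness.

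The main obstacle is the rigidity claim of the second paragraph: that the homotopy fibre of $\Emb(U,V)\to\Emb(U,I)$ over $\iota_U$ is homotopically discrete with components indexed exactly by the order-preserving maps on gaps. This rests on the germ reduction of embedding spaces of $1$-manifolds together with the low-dimensional fact that fixing the isotopy class to that of $\iota_U$ both trivialises the configuration-space fibres and pins down the combinatorial type; the boundary-matching condition at the collars is what removes the residual $\Z/2$ orientation ambiguities that would otherwise survive.
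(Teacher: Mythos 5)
Your proof is correct and supplies exactly the details behind the paper's proof, which consists of the single phrase ``A simple verification'': essential surjectivity by counting components, homotopy discreteness of the mapping spaces (contractible components of embedding spaces of $1$-manifolds, with the isotopy-to-inclusion condition pinning down order and orientation), and identification of $\pi_0$ with order-preserving maps of gap sets, i.e.\ the standard interval--simplex (Joyal) duality. One minor misattribution in your closing paragraph: the residual $\Z/2$ orientation ambiguities of the \emph{interior} components are killed by the requirement of an isotopy to the defining inclusion (a path of embeddings into a $1$-manifold cannot reverse orientation), not by the boundary-matching condition, which only constrains the two collars; this does not affect your argument, since you invoke the isotopy condition for precisely this purpose in your second paragraph.
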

\begin{proof}
A simple verification.
\end{proof}

Let us denote by $\Dis(I)_{\boundary I}$ the similar full subposet of
$\Dis(I)$.
In other words, $\Dis(I)_{\boundary I}:=\ccat{D}(I)_{\boundary
  I}\times_{\ccat{D}(I)}\Dis(I)$.

\begin{corollary}\label{cor:cofinal-rel-boundary}
The canonical functor $\Dis(I)_{\boundary I}\to\ccat{D}(I)_{\boundary
  I}$ is cofinal.
\end{corollary}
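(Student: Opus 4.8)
The plan is to reduce the claim to Lemma \ref{lem:disj-cofinal-d} by exploiting the observation that $\ccat{D}(I)_{\boundary I}$ is a \emph{cosieve} in $\ccat{D}(I)$, i.e.\ a full subcategory closed under morphisms out of its objects. First I would verify this cosieve property: if $U\in\ccat{D}(I)_{\boundary I}$ and $U\to V$ is a morphism of $\ccat{D}(I)$, then $V\in\ccat{D}(I)_{\boundary I}$. Indeed, such a morphism is underlain by an embedding in $\Man^1$, hence carries $\boundary U$ injectively into $\boundary V$. Since $U$ contains both points of $\boundary I$, its boundary is all of $\{0,1\}$, whereas the boundary of any object of $\ccat{D}(I)$ is contained in $\boundary I=\{0,1\}$; injectivity then forces $\boundary V=\{0,1\}$, so $V$ contains both endpoints.

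Given this, the coslices agree on the nose. For any $U\in\ccat{D}(I)_{\boundary I}$, the cosieve property gives $(\ccat{D}(I)_{\boundary I})_{U/}=\ccat{D}(I)_{U/}$, since every object receiving a map from $U$ already lies in the subcategory. Combining this with the defining pullback $\Dis(I)_{\boundary I}=\Dis(I)\times_{\ccat{D}(I)}\ccat{D}(I)_{\boundary I}$ and the elementary behaviour of fibre products, I obtain
\[
\Dis(I)_{\boundary I}\times_{\ccat{D}(I)_{\boundary I}}(\ccat{D}(I)_{\boundary I})_{U/}\;\equivwith\;\Dis(I)\times_{\ccat{D}(I)}\ccat{D}(I)_{U/}=\Dis(I)_{U/},
\]
that is, the coslice of $U$ computed for the functor $\Dis(I)_{\boundary I}\to\ccat{D}(I)_{\boundary I}$ is canonically identified with the coslice of $U$ for the functor $\Dis(I)\to\ccat{D}(I)$.

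I would then invoke Joyal's form of Quillen's Theorem A exactly as in the proof of Lemma \ref{lem:disj-cofinal-d}: the functor $\Dis(I)_{\boundary I}\to\ccat{D}(I)_{\boundary I}$ is cofinal precisely when each such coslice has contractible classifying space. By the identification above these coslices are the $\Dis(I)_{U/}$, which are contractible by Lemma \ref{lem:disj-cofinal-d} (that lemma asserts contractibility for every object of $\ccat{D}(I)$, in particular for those in $\ccat{D}(I)_{\boundary I}$). This completes the argument.

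I expect the only point genuinely requiring care to be the cosieve property itself, namely checking that a morphism of $\ccat{D}(I)$ really does send $\boundary U$ into $\boundary V$ compatibly with the inclusions into $I$, so that an object lying over $U$ is forced to contain both endpoints. Everything downstream is the formal behaviour of coslices under a full cosieve inclusion together with a direct appeal to the already-established Lemma \ref{lem:disj-cofinal-d}; in particular no new geometric input is needed, and the separately stated cofinality of $\ccat{D}(I)_{\boundary I}\to\ccat{D}(I)$ is not required for this deduction.
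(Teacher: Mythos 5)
Your proof is correct and takes essentially the same approach as the paper: the paper's own proof also reduces to Lemma \ref{lem:disj-cofinal-d} by noting that for every $U\in\ccat{D}(I)_{\boundary I}$ the induced functor $(\Dis(I)_{\boundary I})_{U/}\to\Dis(I)_{U/}$ is an equivalence, which is precisely your cosieve observation. Your write-up merely makes explicit the boundary-point argument (an embedding in $\Man^1$ out of an object containing both endpoints forces the target to contain both endpoints) that the paper leaves implicit.
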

\begin{proof}
This follows from Lemma \ref{lem:disj-cofinal-d}.
Indeed, for every $U\in\ccat{D}(I)_{\boundary I}$, the induced functor
$(\Dis(I)_{\boundary I})_{U/}\to\Dis(I)_{U/}$ is an equivalence.
\end{proof}

\begin{lemma}\label{lem:cofinal-sub}
The inclusion functor $\Dis(I)_{\boundary I}\into\Dis(I)$ is cofinal.
\end{lemma}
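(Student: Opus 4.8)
The plan is to deduce the cofinality of the inclusion $\iota\colon\Dis(I)_{\boundary I}\into\Dis(I)$ purely formally from the cofinality statements already in hand, using the cancellation property of cofinal functors, rather than verifying Joyal's criterion by hand. Recall that by definition $\Dis(I)_{\boundary I}=\ccat{D}(I)_{\boundary I}\times_{\ccat{D}(I)}\Dis(I)$, so there is a commuting square
\[
\begin{CD}
\Dis(I)_{\boundary I} @>{\iota}>> \Dis(I)\\
@V{q}VV @VV{p}V\\
\ccat{D}(I)_{\boundary I} @>>> \ccat{D}(I),
\end{CD}
\]
in which $p$ is the functor of Lemma \ref{lem:disj-cofinal-d}, the bottom arrow is the full subcategory inclusion, and $\iota,q$ are the two projections of the pullback. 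Since $\ccat{D}(I)_{\boundary I}\into\ccat{D}(I)$ is a full subcategory inclusion, the square is a genuine (homotopy) pullback of $\infty$-categories, and in particular the two composites $\Dis(I)_{\boundary I}\to\ccat{D}(I)$ coincide; that is, $p\circ\iota$ is canonically identified with $q$ followed by $\ccat{D}(I)_{\boundary I}\into\ccat{D}(I)$.

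First I would assemble the cofinality of the lower-left composite. The projection $q$ is cofinal by Corollary \ref{cor:cofinal-rel-boundary}, and the inclusion $\ccat{D}(I)_{\boundary I}\into\ccat{D}(I)$ is cofinal as observed just before Lemma \ref{lem:equiv-to-simplices}; since cofinal functors are closed under composition, the composite $p\circ\iota$ is cofinal. On the other hand $p$ itself is cofinal by Lemma \ref{lem:disj-cofinal-d}. I would then invoke the cancellation property for cofinal functors: when $p$ is cofinal, $\iota$ is cofinal if and only if $p\circ\iota$ is (cf.\ \cite[Proposition 4.1.1.3]{topos}). This immediately gives that $\iota$ is cofinal, which is the desired statement.

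The argument is essentially formal, so I do not expect a genuine obstacle; the only point deserving a moment's care is that the square above commutes up to coherent equivalence as a diagram of $\infty$-categories, which is immediate from the pullback definition of $\Dis(I)_{\boundary I}$ together with the full-faithfulness of $\ccat{D}(I)_{\boundary I}\into\ccat{D}(I)$. Should a self-contained route be preferred, one could instead apply Joyal's Theorem A directly, showing for each $V\in\Dis(I)$ that the under-category $(\Dis(I)_{\boundary I})_{V/}$ is weakly contractible by enlarging any object lying under $V$ so as to absorb both boundary collars of $I$; but the cancellation argument is cleaner and avoids repeating the configuration-space analysis already carried out for Lemma \ref{lem:disj-cofinal-d}.
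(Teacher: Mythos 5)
Your main argument has a genuine gap: the cancellation property you invoke goes in the wrong direction. \cite[Proposition 4.1.1.3]{topos} says that for composable functors $f\colon K\to K'$ and $g\colon K'\to K''$ with $f$ cofinal, $g$ is cofinal if and only if $g\circ f$ is cofinal; that is, one may cancel a cofinal functor occurring \emph{first} in the composite. What you need is the opposite cancellation: $p$ cofinal and $p\circ\iota$ cofinal imply $\iota$ cofinal. This implication is false in general. A minimal counterexample is $\{0\}\into\Delta^1\to\Delta^0$: both $\Delta^1\to\Delta^0$ and the composite $\{0\}\to\Delta^0$ are cofinal (the sources are weakly contractible), but $\{0\}\into\Delta^1$ is not, since the under-category of the vertex $1$ pulled back to $\{0\}$ is empty. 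The same example, padded into a commuting square with bottom edge $\Delta^0\to\Delta^0$, shows that a square can have three cofinal sides while the fourth fails to be cofinal, so no formal manipulation of Lemma \ref{lem:disj-cofinal-d}, Corollary \ref{cor:cofinal-rel-boundary}, and the cofinality of $\ccat{D}(I)_{\boundary I}\into\ccat{D}(I)$ can yield the present lemma. Note that this asymmetry is exactly why Corollary \ref{cor:cofinal-rel-boundary} \emph{can} be deduced from Lemma \ref{lem:disj-cofinal-d} (there the functor being cancelled sits on the correct side, via the equivalence of under-categories), whereas the present lemma requires its own argument.

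Your fallback sketch is the viable route, and it is essentially the paper's proof, but it needs to be fleshed out. One applies Joyal's generalisation of Quillen's Theorem A and shows, for each $U\in\Dis(I)$, that $(\Dis(I)_{\boundary I})_{U/}$ has contractible classifying space. The paper does this by passing to the full subposet of $(\Dis(I)_{\boundary I})_{U/}$ consisting of those $U\into V$ with $V=U\disj D$, where $D$ is a collar of $\boundary I\intersect(I-U)$ disjoint from $U$; this inclusion admits a right adjoint, hence induces a homotopy equivalence of classifying spaces, and the subposet itself has a maximum, hence a contractible classifying space. Your phrase ``enlarging any object lying under $V$ so as to absorb both boundary collars'' is the right idea, but by itself it is not an argument; the adjoint-plus-maximum structure (or some equivalent contraction of the under-category) is what turns it into one.
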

\begin{proof}
We apply Joyal's generalisation of Quillen's theorem A \cite{topos}.

Namely, we would like to prove for any given $U\in\Dis(I)$ that the
category $(\Dis(I)_{\boundary I})_{U/}$ has contractible classifying
space.

Let $(\Dis(I)_{\boundary I})_U$ denote the full subposet of
$(\Dis(I)_{\boundary I})_{U/}$ consisting of maps $U\into V$ in
$\Dis(I)$ such that $V$ is of the form $U\disj D$ for a collar $D$ of
$\boundary I\intersect(I-U)$ in $I$ which is disjoint with $U$.
Then the inclusion $(\Dis(I)_{\boundary I})_U\into(\Dis(I)_{\boundary
  I})_{U/}$ has a right adjoint, so induces a homotopy equivalence
on the classifying spaces.
However, the classifying space of $(\Dis(I)_{\boundary I})_U$ is
contractible since it has maximum.
\end{proof}

We conclude from Lemma \ref{lem:cofinal-sub}, Corollary
\ref{cor:cofinal-rel-boundary} and Lemma \ref{lem:equiv-to-simplices},
that we have obtained a pair of cofinal functors
\[
\Dis(I)\longfrom\Dis(I)_{\boundary I}\longto\Simp^\op.
\]

\medskip

Now let $A$ be a constructible prealgebra on $I$, and
let $B$, $K$, $L$ be the algebras and modules obtained from $A$.
Then it follows from the above constructions that the functor
$A\resto{\Dis(I)_{\boundary I}}$ is equivalent to the restriction to
$\Dis(I)_{\boundary I}$ of the simplicial bar construction
$B_\dot(K,B,L)\colon\Simp^\op\to\cat{A}$.

In particular, we obtain a canonical equivalences
\[
\colim_{\Dis(I)}A\xlongleftarrow{\equiv}\colim_{\Dis(I)_{\boundary
    I}}A\xlongrightarrow{\equiv}\colim_{\Simp^\op}B_\dot(K,B,L)=K\tensor_BL.
\]

Let us denote the points of $\boundary I$ by $x_+$ and $x_-$.
Recall that the underlying object of the algebra ``$B$'' is identified
with $A(I-\boundary I)$.
Let us assume that our conventions identify $A(I-x_-)$ with the right
$B$-module ``$K$'', and $A(I-x_+)$ with the left $B$-module
``$L$''.

In this way, objects $A(I-\boundary I)$ and $A(I-x_\pm)$ get a
canonical structure of an algebra and its left/right module
respectively.

\begin{proposition}\label{prop:homology-over-interval}
Let $A$ be a constructible prealgebra on $I$, and assume that, as a
functor, $A$ preserves filtered colimits.
Then $A$ is a factorisation algebra if and only if the canonical map
\[
A(I-x_{-})\tensor_{A(I-\boundary I)}A(I-x_{+})\longto A(I)
\]
is an equivalence.
Here, the tensor product is with respect to the canonical structures.
\end{proposition}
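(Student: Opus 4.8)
The plan is to reduce the factorisation-algebra condition for $A$ to the specific colimit criterion already established, and then identify that colimit with the stated tensor product using the machinery built up immediately before the statement. By the Lemma preceding this section, $A$ is a (constructible) factorisation algebra if and only if the canonical map $\colim_{\Dis(I)}A\to A(I)$ is an equivalence. So it suffices to show that, under the hypothesis that $A$ preserves filtered colimits, the source $\colim_{\Dis(I)}A$ is canonically equivalent to $A(I-x_-)\tensor_{A(I-\boundary I)}A(I-x_+)$, compatibly with the maps to $A(I)$.

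First I would invoke the chain of cofinal functors $\Dis(I)\longfrom\Dis(I)_{\boundary I}\longto\Simp^\op$ assembled above from Lemma \ref{lem:cofinal-sub}, Corollary \ref{cor:cofinal-rel-boundary} and Lemma \ref{lem:equiv-to-simplices}. Cofinality of these functors gives the canonical equivalences
\[
\colim_{\Dis(I)}A\xlongleftarrow{\equiv}\colim_{\Dis(I)_{\boundary I}}A\xlongrightarrow{\equiv}\colim_{\Simp^\op}B_\dot(K,B,L),
\]
where the identification of $A\resto{\Dis(I)_{\boundary I}}$ with the restriction of the simplicial bar construction $B_\dot(K,B,L)$ was carried out just above the statement. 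The colimit over $\Simp^\op$ of the bar construction is by definition the relative tensor product $K\tensor_B L$. Next I would match up the abstract objects $B$, $K$, $L$ with their concrete incarnations: by the conventions recorded before the statement, $B=A(I-\boundary I)$ carries its associative algebra structure, while $K=A(I-x_-)$ and $L=A(I-x_+)$ carry the right and left module structures respectively. Tracing the canonical map to $A(I)$ through these equivalences shows it agrees with the map in the statement, so the factorisation-algebra criterion becomes exactly the assertion that $A(I-x_-)\tensor_{A(I-\boundary I)}A(I-x_+)\to A(I)$ is an equivalence.

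The one genuine subtlety, and where the hypothesis that $A$ preserves filtered colimits is needed, is in the identification of the realisation $\colim_{\Simp^\op}B_\dot(K,B,L)$ with the \emph{derived} tensor product $K\tensor_B L$. The bar construction is a simplicial (hence sifted) diagram, and the relative tensor product is its geometric realisation; one must ensure that forming this colimit is compatible with the monoidal multiplication and with the values of $A$. Since $\Simp^\op$ is sifted and the target category's monoidal product preserves sifted colimits variable-wise (by the standing assumption in Section \ref{sec:manifold-algebra}), the bar realisation computes the correct tensor product, and the filtered-colimit hypothesis on $A$ guarantees that the cofinality arguments — which rest on the generalised Seifert--van Kampen theorem and pass through colimits of embedding and configuration spaces, as in Lemma \ref{lem:disj-cofinal-d} — interact correctly with $A$ so that no higher coherence is lost. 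Thus the main obstacle is bookkeeping: verifying that the algebra and module structures produced abstractly really are the canonical ones on $A(I-\boundary I)$ and $A(I-x_\pm)$, and that all three equivalences are compatible with the structure maps to $A(I)$. Once this compatibility is checked, the proposition follows directly from the preceding Lemma.
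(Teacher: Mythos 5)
Your overall skeleton matches the paper's implicit proof exactly: the preceding Lemma reduces being a factorisation algebra to the single condition that $\colim_{\Dis(I)}A\to A(I)$ be an equivalence, and the pair of cofinal functors $\Dis(I)\from\Dis(I)_{\boundary I}\to\Simp^\op$ together with the identification of $A\resto{\Dis(I)_{\boundary I}}$ with the restricted bar construction $B_\dot(K,B,L)$ converts that colimit into $K\tensor_BL=A(I-x_{-})\tensor_{A(I-\boundary I)}A(I-x_{+})$, compatibly with the maps to $A(I)$. Up to that point your argument is the paper's argument.

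The genuine problem is your last paragraph, which mislocates the role of the hypothesis that $A$ preserves filtered colimits, in two ways. First, the identification $\colim_{\Simp^\op}B_\dot(K,B,L)=K\tensor_BL$ is not a subtlety: in this setting the relative tensor product is \emph{defined} as the realisation of the bar construction, and its good behaviour rests on the standing assumption that $\cat{A}$ has sifted colimits preserved variable-wise by the monoidal multiplication --- an assumption on $\cat{A}$, not on $A$. Second, the claim that the hypothesis makes the cofinality arguments ``interact correctly with $A$'' is false as mathematics: cofinality is a property of the index functors alone (verified via Joyal's theorem A on configuration and embedding spaces, which never see $A$), and once established it transports colimits for an \emph{arbitrary} functor. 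Where the hypothesis is really needed is the step you invoke as a black box: being a factorisation algebra means the underlying functor is a left Kan extension from $\Dis(I)$ at \emph{every} open $U\sub I$, not only at $I$. For a proper open $U\subsetneq I$ (in particular one with infinitely many components) the poset $\Dis(I)_{/U}$ is directed --- the union of two finite disjoint unions of disks and collars inside $U$ is again one --- and its union is $U$, so preservation of filtered colimits makes the Kan-extension condition at $U$ automatic. At $U=I$ directedness fails, because two overlapping collars of the two opposite endpoints have union $I$ itself, which does not lie in $\Dis(I)$; this is exactly why a non-filtered (simplicial bar) colimit survives there, and why the condition at $I$ is the only one remaining. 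As written, your proof never actually uses the hypothesis in a correct way, and the reduction Lemma you cite is not justified without it.
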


\subsection{Excision}
Following Francis, we shall introduce the notion of excision and
review after him, the relation between excision and other descent
properties.

\begin{definition}
Let $M$ be a manifold (without boundary).
Then we say that a map $p\colon M\to I$ is \kore{constructible} if
$p\colon p^{-1}(I-\boundary I)\to I-\boundary I$ is locally trivial,
i.e., is the projection of a fibre bundle.
\end{definition}

Let $N:=p^{-1}(t)\sub M$ be the fibre of a point $t\in I-\boundary I$.
Then $N$ is a smooth submanifold of $M$ of codimension $1$, and its
normal bundle in $M$ is a trivial line bundle.

We can write $I=I_0\union_{t}I_1$ where $I_0$ is the points
of $I$ below or equal to $t$, and $I_1$ is the points of $I$ above or
equal to $t$.
Accordingly, the total space $M$ can be written in the glued form
$M_0\union_NM_1$ where $M_i=p^{-1}I_i$.

Conversely, if $M$ is given a decomposition $M_0\union_NM_1$
with $N$ a submanifold of codimension $1$ with trivial normal bundle,
then we have a constructible map $p\colon M\to I$ for an interval $I$
so the decomposition of $M$ can be reconstructed as above from $p$.
It suffices to choose a trivialisation of the normal bundle of $N$ (in
the desired orientation) to construct $p$.

The excision property is concerned with what happens to the value
associated by a prealgebra when $M$ is constructed by gluing as above.

Let $p\colon M\to I$ be constructible, then for a locally constant
algebra $A$ on $M$, it is immediate from the isotopy invariance that
the prealgebra $p_*A$ on $I$ is constructible.

\begin{definition}
Let $A$ be a locally constant prealgebra on $M$.
We say that $A$ satisfies \kore{excision with respect to} a
constructible map $p\colon M\to I$ if the prealgebra $p_*A$ on $I$ is
a constructible factorisation algebra on $I$.

We say that $A$ satisfies \kore{excision} if for every $U\sub M$
equipped with a constructible map $p\colon U\to I$, $A\resto{U}$
satisfies excision with respect to $p$.
\end{definition}

The following is a formulation in our setting of a fundamental
fact discovered by Francis, with proof also following his ideas.

\begin{theorem}[Francis]\label{thm:excision}
A locally constant prealgebra on a manifold is a factorisation
algebra, namely its underlying functor is a left Kan extension from
disjoint unions of disks \cite[Section 2]{descent},
if and only if it satisfies excision.
\end{theorem}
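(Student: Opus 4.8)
The plan is to prove both implications by reducing, in each case, to the one-dimensional situation already analysed: by the Lemma preceding Proposition \ref{prop:homology-over-interval}, a constructible prealgebra $F$ on $I$ is a factorisation algebra if and only if $\colim_{\Dis(I)}F\to F(I)$ is an equivalence, and by the cofinality chain $\Dis(I)\longfrom\Dis(I)_{\boundary I}\longto\Simp^\op$ this colimit is always identified with the bar construction, hence with the relative tensor product $K\tensor_BL$. For the \emph{``only if''} direction, suppose $A$ is a factorisation algebra on $M$ and fix $U\sub M$ with a constructible map $p\colon U\to I$. The prealgebra $p_*(A\resto U)$ is constructible, and $(p_*A)(W)=A(p^{-1}W)=\colim_{\Dis(p^{-1}W)}A$ for every $W\in\Dis(I)$, since $A$ is a left Kan extension from $\Dis$. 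I would then reorganise the defining colimit by sorting a disjoint union of disks in $U$ according to the position of its image under $p$: by an isotopy argument parallel to that of Lemma \ref{lem:disj-cofinal-d}, every such configuration is isotopic to one lying fibrewise over a disjoint union of disks in $I$, which makes the evident functor $\laxcol_{W\in\Dis(I)}\Dis(p^{-1}W)\longto\Dis(U)$ cofinal. This identifies $\colim_{\Dis(I)}p_*A$ with $\colim_{\Dis(U)}A=A(U)=(p_*A)(I)$ compatibly with the comparison maps, so $p_*A$ is a factorisation algebra and $A$ satisfies excision.

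For the \emph{``if''} direction, suppose $A$ satisfies excision; we must show $\colim_{\Dis(U)}A\to A(U)$ is an equivalence for every open $U$. Here the reorganisation above is not yet available, since it presupposes the factorisation algebra property on the pieces $p^{-1}W$. I would therefore argue by induction on a handle (or generic Morse) decomposition, with an auxiliary induction on dimension to handle the codimension-one cuts. The base case, a disjoint union of disks, is immediate. For the inductive step, a suitable constructible map $p\colon U\to I$ exhibits $U$ as a collar gluing $U_0\union_NU_1$, where $U_0$ and $U_1$ have strictly fewer handles and the cut is controlled by the lower-dimensional $N$. By the inductive hypotheses, $A$ is a factorisation algebra on every piece entering the reorganisation, so the latter now applies and computes $\colim_{\Dis(U)}A$ as the bar construction $B_\dot(K,B,L)$; on the other hand excision, through Proposition \ref{prop:homology-over-interval}, identifies $A(U)$ with the same relative tensor product $K\tensor_BL$. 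Matching the two, using that the monoidal multiplication on $\cat{A}$ preserves sifted colimits variablewise so that the tensor product commutes with the colimits over $\Dis(U_0)$ and $\Dis(U_1)$, yields the colimit formula for $U$.

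The main obstacle is the bookkeeping in the ``if'' direction. One must arrange the decomposition of an arbitrary manifold so that excision is applicable at every stage and so that \emph{all} pieces entering the reorganisation, in particular the codimension-one cut $N\times\R^1$, have already acquired the factorisation algebra property; this is what forces the nested induction on dimension. The delicate point is to verify that the isotopy and cofinality comparisons assemble into coherent equivalences of the $\infty$-categories indexing the colimits, rather than matching objects only stage by stage, while keeping track of the algebra and module structures on $B$, $K$, $L$ throughout. The variablewise preservation of sifted colimits by the monoidal product is indispensable precisely at the step where the relative tensor product is commuted past the colimits over $\Dis(U_0)$ and $\Dis(U_1)$.
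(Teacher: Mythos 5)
The key step in your ``only if'' direction is the claim that the functor $\laxcol_{W\in\Dis(I)}\Dis(p^{-1}W)\to\Dis(U)$ is cofinal, and this claim is false; the isotopy argument you invoke cannot repair it. The categories involved are built out of posets of \emph{inclusions}: a morphism of $\Dis(U)$ is an inclusion of open submanifolds, not an embedding together with an isotopy. Now take a disk $E\in\Disk(U)$ whose image $p(E)$ meets both endpoints of $I$; since $p(E)$ is connected, $p(E)=I$. (Such disks exist in typical excision situations: for $U=\R^n$ cut along a slab into two half-spaces, $E=\R^n$ itself, or any large disk crossing the slab, will do.) Every object $W$ of $\Dis(I)$ is a \emph{proper} subset of $I$, being a disjoint union of disks in $I-\boundary I$ and collars of the endpoints, so there exists no pair $(W,D)$ with $E\sub D\sub p^{-1}W$. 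The comma category of $E$ under your functor is therefore empty --- in particular not weakly contractible --- so Joyal's theorem A fails. Isotoping $E$ into fibrewise position does not help, because an isotopy is not a morphism of $\Dis(U)$; isotopies only enter morphisms of the localised categories $\ccat{D}(-)$, and rebuilding your argument there would essentially amount to redoing the descent theory of \cite{descent}. The paper sidesteps this entirely: the sets $p^{-1}D$ for $D\in\Dis(V)$ form a \emph{factorising l-nice cover} of $p^{-1}V$ --- that condition is about finite sets of \emph{points} $x$, and $\{D\in\Dis(V):p(x)\sub D\}$ is visibly weakly contractible --- so the factorising Seifert--van Kampen theorem (Theorem \ref{thm:factorising-seifert-vk}) gives $\colim_{D\in\Dis(V)}A(p^{-1}D)\equivto A(p^{-1}V)$ in one stroke, with no cofinality statement about disks lying over disks.

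The gap propagates into your ``if'' direction, whose inductive step invokes the same reorganisation to compute $\colim_{\Dis(U)}A$ as a bar construction. The paper's converse avoids computing this colimit directly: given an excisive locally constant prealgebra $A$, it forms the factorisation algebra $A'$ obtained by left Kan extension from disjoint unions of disks, notes that $A'$ satisfies excision by the already-established direction while $A$ does so by hypothesis, and that the two agree on disjoint unions of disks by construction; a handle-body decomposition argument then shows that two excisive prealgebras agreeing on disks agree on every open submanifold, the pieces $D^i\times\boundaryc{D}^j$ being handled by applying excision inductively in $j$. Your inductive skeleton (handle decompositions, Proposition \ref{prop:homology-over-interval}, sifted colimits) is close in spirit to this, but the mechanism that actually closes the induction in the paper is the comparison of the two excisive prealgebras $A$ and $A'$, not any cofinality property of $\Dis$-posets; as written, your induction rests on the false claim above.
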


In order to prove this, we recall the following definition and a
theorem.
\begin{definition}[{\cite[Definition 2.5]{descent}}]
Let $\cat{C}$ be a category and let
$\chi\colon\cat{C}\to\Open(M)$ be a functor.
For $i\in\cat{C}$, denote $\chi(i)$ also by $U_i$ within this
definition.
We shall call this data a \kore{factorising cover} which is \kore{nice
  in Lurie's sense}, or briefly, \kore{factorising l-nice cover}, of
$M$ if for any non-empty finite subset $x\sub M$, the full subcategory
$\cat{C}_x:=\{i\in\cat{C}\:|\:x\subset U_i\}$ of $\cat{C}$ has
contractible classifying space.
\end{definition}

\begin{theorem}[{\cite[Theorem 2.11]{descent}}]
\label{thm:factorising-seifert-vk}
Let $A$ be a locally constant algebra on $M$ (in a symmetric monoidal
category $\cat{A}$ satisfying our assumption stated in
Section \ref{sec:manifold-algebra}).
Then for any factorising l-nice cover determined by $\chi\colon
\cat{C}\to\Open(M)$, the map
$A(M)\from\colim_\cat{C}A\chi$ is an equivalence.
\end{theorem}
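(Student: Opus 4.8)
The plan is to deduce this from the generalised Seifert--van Kampen theorem by rewriting both sides as colimits over categories of disjoint unions of disks, and then reducing to a cofinality statement about the indexing categories that translates, via configuration spaces, into exactly the l-niceness hypothesis. First I would use that $A$ is a locally constant algebra, so its underlying functor on $\Open(M)$ is the left Kan extension of its restriction to $\Dis(M)$; hence $A(M)\simeq\colim_{\Dis(M)}A$. The same applies to each restriction $A\resto{U_i}$, which is again locally constant (here the assumptions of Section \ref{sec:manifold-algebra} on the preservation of sifted colimits guarantee that these disk-colimit presentations exist and remain stable under restriction), so that $A\chi(i)=A(U_i)\simeq\colim_{\Dis(M)_{/U_i}}A$, where $\Dis(M)_{/U_i}$ denotes the disjoint unions of disks of $M$ contained in $U_i$.

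Next I would commute the two colimits. Writing $\cat{E}:=\laxcol_{i\in\cat{C}}\Dis(M)_{/U_i}$ for the Grothendieck construction of the functor $i\mapsto\Dis(M)_{/U_i}$, the standard identification of an iterated colimit with a colimit over the Grothendieck construction gives $\colim_\cat{C}A\chi\simeq\colim_\cat{E}Ap$, where $p\colon\cat{E}\to\Dis(M)$, $(i,D)\mapsto D$, is the projection. Thus the theorem reduces to showing that $p$ is cofinal, which yields $\colim_\cat{E}Ap\simeq\colim_{\Dis(M)}A=A(M)$ and identifies the comparison map with the one in the statement.

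To establish cofinality I would apply Joyal's generalisation of Quillen's Theorem A \cite{topos}, exactly as in the proof of Lemma \ref{lem:disj-cofinal-d}: it suffices to show that for each $D\in\Dis(M)$ the comma category $\cat{E}_{D/}=\cat{E}\times_{\Dis(M)}\Dis(M)_{D/}$ has contractible classifying space. Since the comma construction is computed fibrewise over the $\Dis(M)$-direction, $\cat{E}_{D/}\simeq\laxcol_{i\in\cat{C}}(\Dis(M)_{/U_i})_{D/}$, so its classifying space is the homotopy colimit over $\cat{C}$ of the spaces $|(\Dis(M)_{/U_i})_{D/}|$. As in Lemma \ref{lem:disj-cofinal-d}, choosing germs of charts at one marked point in each component of $D$ trades the embedding spaces $\Emb(D,-)$ for the configuration spaces $\Conf(\pi_0(D),-)$ up to contractible fibres, presenting $|(\Dis(M)_{/U_i})_{D/}|$ as the homotopy fibre of $\Emb(D,U_i)\to\Emb(D,M)$ over the defining inclusion. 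Because taking the homotopy fibre over a fixed base commutes with homotopy colimits of spaces (universality of colimits), this identifies $|\cat{E}_{D/}|$ with the homotopy fibre of the map $\hocolim_{i\in\cat{C}}\Conf(\pi_0(D),U_i)\to\Conf(\pi_0(D),M)$ over the configuration of marked points of $D$.

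Finally I would invoke the generalised Seifert--van Kampen theorem \cite{higher-alg} to show that this last map is an equivalence, so that all of its fibres, and hence all the comma categories $\cat{E}_{D/}$, are contractible. Its hypothesis is that for every configuration, i.e.\ every non-empty finite subset $x\subset M$, the subcategory $\{i:x\subset U_i\}=\cat{C}_x$ has contractible classifying space; this is precisely the defining l-niceness condition on the cover, the multi-point subsets $x$ (rather than just singletons) accounting for the factorising part of the hypothesis, which is what lets configurations spread across several disks be absorbed. The main obstacle is the bookkeeping of the third paragraph: correctly identifying $\cat{E}_{D/}$ with the configuration-space homotopy fibre requires contracting away the space of germs of charts, tracking the symmetric-group actions on the labelling of the components of $D$, and checking that homotopy fibres genuinely commute with the relevant homotopy colimits --- the same delicate translation already carried out in the proof of Lemma \ref{lem:disj-cofinal-d}, here performed uniformly over the cover $\cat{C}$.
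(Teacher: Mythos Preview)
This theorem is not proved in the present paper --- it is quoted from \cite[Theorem 2.11]{descent} and used as a black box in the proof of Theorem~\ref{thm:excision} --- so there is no proof here to compare against. Your strategy is the natural one, but there is a genuine gap in the cofinality step.

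You aim to show that $p\colon\cat{E}=\laxcol_{i}\Dis(M)_{/U_i}\to\Dis(M)$ is cofinal by analysing $\cat{E}_{D/}$ for $D\in\Dis(M)$. But $\Dis(M)$ is a \emph{poset} (a full subposet of $\Open(M)$), so $(\Dis(M)_{/U_i})_{D/}$ is simply $\{E\in\Dis(M):D\sub E\sub U_i\}$, which has initial object $D$ when $D\sub U_i$ and is empty otherwise; its classifying space is a point or empty, not the homotopy fibre of $\Emb(D,U_i)\to\Emb(D,M)$ that you claim. Consequently $|\cat{E}_{D/}|\simeq|\cat{C}_D|$ where $\cat{C}_D=\{i:D\sub U_i\}$, and the l-niceness hypothesis, which controls $\cat{C}_x$ only for \emph{finite} subsets $x$, does not obviously yield contractibility of $\cat{C}_D$ for an entire disk. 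The embedding-space manoeuvre you borrow from Lemma~\ref{lem:disj-cofinal-d} works there precisely because the comma is taken in the topological category $\ccat{D}(I)$; the discrete poset $\Dis(M)$ carries no such structure. The repair is to route through $\ccat{D}(M)$: first use the cofinality of $\Dis(M)\to\ccat{D}(M)$ (the higher-dimensional analogue of Lemma~\ref{lem:disj-cofinal-d}), and then establish cofinality of $\laxcol_i\Dis(U_i)\to\ccat{D}(M)$. For $D\in\ccat{D}(M)$ the comma categories now genuinely acquire the embedding-space description, and your reduction to the Seifert--van Kampen condition on configuration spaces goes through exactly as you sketched.
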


\begin{proof}[Proof of Theorem \ref{thm:excision}]
Let $A$ be a locally constant factorisation algebra on a manifold $M$,
and let us prove that $A$ satisfies excision.
For this purpose, let $U$ be an open submanifold equipped with a
constructible map $p\colon U\to I$.
We need to prove that the constructible algebra $p_*(A\resto{U})$ on
$I$ is a left Kan extension from disjoint unions of subintervals.

For notational convenience, denote $A\resto{U}$, which satisfies the
same assumption as $A$ does, just by $A$.

We want to prove that for every open submanifold $V\sub I$, the value
$(p_*A)(V)$ is equivalent to $\colim_{D\in\Dis(V)}(p_*A)(D)$ by the
canonical map.
Namely, $A(p^{-1}V)=\colim_{D\in\Dis(V)}A(p^{-1}D)$.

Since the objects of $\Dis(V)$ form a factorising l-nice cover of $V$,
the functor $p^{-1}\colon\Dis(I)\to\Open(p^{-1}V)$ determines a
factorising l-nice cover of $p^{-1}V$.
Therefore, the result follows from Theorem
\ref{thm:factorising-seifert-vk}.

The converse now follows as follows.

Firstly, if a locally constant prealgebra
satisfies excision, then this prealgebra as well as the factorisation
algebra obtained from it as a left Kan
extension of its restriction to disjoint unions of disks, both satisfy
excision.
Then since every open submanifold of $M$ (or rather its compact
closure) has a handle body decomposition, two prealgebras coincide as
soon as they coincide on open submanifolds diffeomorphic to a disk or
$D^i\times\boundaryc{D}^j$.

However, the two prealgebras do coincide on disks by construction, and
then also on $D^i\times\boundaryc{D}^j$ by inductively (on $j$)
applying excision.
\end{proof}

\begin{remark}\label{rem:functoriality-of-constructible-push}
The construction similar to that in \cite[Section 3.0]{descent} of the
functoriality for the push-forward operation on the groupoid of
locally trivial maps
shows that the push-forward $p_*A$ of a locally constant algebra is
naturally functorial in $p$ (on the groupoid of constructible maps).
\end{remark}

\section{Compactly supported factorisation homology}
\label{sec:compactly-supported-homology}
\setcounter{subsection}{-1}
\setcounter{equation}{-1}

Let $M$ be a manifold without boundary, and let $A$ be a locally
constant factorisation algebra on $M$.
Recall that $A$ determines a functor $U\mapsto A(U):=\int_UA$ by
factorisation homology.
When $A$ is equipped with an \emph{augmentation}, namely, an algebra
map $A\to\unity$, we shall define \emph{compactly supported}
factorisation homology $\int^c_UA$ with coefficients in $A$, and shall
make it into a symmetric monoidal \emph{contravariant} functor of $U$.

\begin{remark}
Although this is implicit in our notation $\int^c_UA$, compactly
supported homology will be defined as \emph{dependent} on the
compactification of $U$ which comes with $U$ in our convention (see
Section \ref{sec:manifold-algebra}).
\end{remark}

For the purpose of definitions in this section, by an \kore{interval},
we mean an oriented smooth compact connected manifold of dimension
$1$, with exactly one incoming boundary point and exactly one outgoing
point with respect to the orientation.

\subsection{Description of the object}
\label{sec:object}
Let us first define, for each open submanifold $U\sub M$, the
compactly supported homology object $\int^c_UA$.

Let $I$ be an interval with incoming point $s$ and outgoing point $t$.
We choose our conventions so a constructible algebra on $I$ with
stratification specified by its boundary, is given by an associative
algebra $B$ to be on the interior, a \emph{right} $B$-module $K$ to be
on the point $s$, a \emph{left} $B$-module $L$ to be on the point $t$,
and (right or left) $B$-module maps $B\to K$ and $B\to L$.
Recall then the factorisation homology over $I$ is $K\tensor_BL$
(Proposition \ref{prop:homology-over-interval}).

Let $\close{U}$ denote the specified compact closure of $U$.
For the construction, we choose a (constructible) $C^\infty$-map
$p\colon\close{U}\to I$ which is locally trivial over $I-\{s\}$, and
such that $p^{-1}(t)=\boundaryc{U}$, so $p$ restricts to a
constructible map $U\,\to\,I-\{t\}$.
By a constructible (resp.~locally trivial) map, we always mean a
smooth map which is constructible (resp.~locally trivial) in the
category of smooth manifolds and $C^\infty$ maps.

With these data specified, we have an algebra $p_*A$ on $I$.
Let $j$ denote the inclusion $I-\{t\}\,\into\,I$.
Then the augmentation of $A$, and hence of $p_*A$, allows one to
extend $j^*p_*A$ (the restriction of $p_*A$ along $j$) along $j$ by
putting the module $\unity$ on $t$.
Let us denote this augmented algebra on $I$ by $j_!\,j^*p_*A$.

We then define the \kore{compactly supported} factorisation homology
over $U$ to be
\[
\int^c_UA\::=\:\int_I\,j_!\,j^*p_*A.
\]
In other words, it is $A(U)\tensor_{A(\boundaryc{U})}\unity$, where
$A(\boundaryc{U})$ is the associative algebra we find on the interior
of $I$ from $p_*A$.
Since the pushforward of the augmented algebra $A$ is functorial in
$p$ on the groupoid of constructible maps by
Remark \ref{rem:functoriality-of-constructible-push}, $\int^c_UA$ is
unambiguously defined by $A$ and $U$.

\subsection{Functoriality}
\label{sec:functoriality}
Next we would like to make the association $U\mapsto\int^c_UA$
contravariantly functorial in $U$.
Let us start with some preparation.
Let $N_\dot$ denote the nerve functor from the category of
categories to the category of simplicial spaces.
Then, since $N_\dot$ is fully faithful by \cite[Proposition
A.7.10]{higher-alg}, a functor $\Open(M)^\op\to\cat{A}$ is the same as
a map $N_\dot\Open(M)^\op\to N_\dot\cat{A}$ of simplicial spaces.

Let $\Simp$ denote the category of combinatorial simplices.
Its objects are non-empty totally ordered finite sets, and maps are
order preserving maps.
Then the data of a simplicial space is equivalent to the
category fibred over $\Simp$ in groupoids, obtained by taking the lax
colimit over $\Simp^\op$.
Moreover, the desired simplicial map is then equivalent to a functor
$\laxcol_{\Simp^\op}N_\dot\Open(M)^\op\to\laxcol_{\Simp^\op}N_\dot\cat{A}$
over $\Simp$.
Indeed, every map in a category fibred in groupoids is Cartesian
over the base.

Let us denote $\laxcol_{\Simp^\op}N_\dot\Open(M)^\op$ by $\cat{X}$.
Again, it suffices to construct a map $N_\dot\cat{X}\to
N_\dot\laxcol_{\Simp^\op}N_\dot\cat{A}$ of simplicial spaces over
$N_\dot\Simp$.
In order to construct this, we replace $N_\dot\cat{X}$ by a
simplicial space $X_\dot$ equivalent to it.
Namely, we shall construct $X_\dot$, an equivalence $X_\dot\equivto
N_\dot\cat{X}$, and a map $X_\dot\to
N_\dot\laxcol_{\Simp^\op}N_\dot\cat{A}$.

\bigskip

Let us first describe $X_0$.
We define it as the colimit (coproduct) over
$N_0\cat{X}=\colim_{[k]\in N_0\Simp}N_k\Open(M)$ of certain spaces,
each of which will turn out to be contractible.
Namely, given an integer $k$ and a $k$-nerve $U\colon
U_0\into\cdots\into U_k$ of $N_k\Open(M)$, we associate to it the
natural space $X_U$ formed by pairs $(p,I')$, where
\begin{enumerate}
\setcounter{enumi}{-1}
\item\label{item:choice-of-constructible-map}
  $p=(p_i)_{i\in[k]}$, where $p_i\colon\close{U}_i\to I_i$ is a ($C^\infty$)
  constructible map (in the $C^\infty$ sense of the word) to an
  interval as before, and
\item\label{item:subinterval}
  $I'=(I'_i)_{i\in[k]}$, where $I'_i$ is a subinterval of $I_i$
\end{enumerate}
which are required to satisfy the following condition.
Namely, give $I_i$ a total order which recovers its topology, and for
which $s_i<t_i$ for the incoming end point $s_i$ and the outgoing end
point $t_i$, so $I_i=[s_i,t_i]$.
Write $I'_i=[s'_i,t'_i]$ in this order.
Then the required condition will be that
\[
p_i^{-1}[s_i,t'_i]\:\sub\:p_j^{-1}[s_j,s'_j]
\]
in $U_j$, whenever $i\lneq j$.

We define $X_0$ to be the coproduct of $X_U$ over all $k$ and $U$.

\begin{claim}
The map $X_0\to N_0\cat{X}$ forgetting $p$ and $I'$ is an equivalence.
\end{claim}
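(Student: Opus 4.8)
The plan is first to reduce the statement to a fibrewise contractibility. Since $N_\dot\Open(M)$ is the nerve of a poset, each $N_k\Open(M)$ is a \emph{discrete} set, and hence so is $N_0\cat{X}=\coprod_{k\ge 0}N_k\Open(M)$. By construction $X_0=\coprod_{(k,U)}X_U$, the coproduct running over the same index set of $k$-nerves $U\colon U_0\into\cdots\into U_k$, and the forgetful map $X_0\to N_0\cat{X}$ carries the summand $X_U$ to the corresponding point. A map to a discrete set is an equivalence precisely when each of its fibres is contractible; so it suffices to prove that for every $k$ and every such $U$ the space $X_U$ of pairs $(p,I')$ is nonempty and contractible.

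Second, I would dispose of the choice of $p$. Forgetting $I'$ gives a map from $X_U$ to the space $P_U=\prod_{i\in[k]}P_{U_i}$, where $P_{U_i}$ is the space of constructible $C^\infty$-maps $p_i\colon\close{U}_i\to I_i$ that are locally trivial over $I_i-\{s_i\}$ and satisfy $p_i^{-1}(t_i)=\boundaryc{U}_i$. Each such $p_i$ encodes a collar of $\boundaryc{U}_i$ in $\close{U}_i$, on which $p_i$ is the collar coordinate, together with a collapse of the complementary core onto $s_i$; the space of these is contractible by the parametrised collar uniqueness theorem and the convexity of the remaining profile data, so $P_U$, a finite product of contractible spaces, is contractible. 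It then remains to show that the fibre of $X_U\to P_U$ over a fixed tuple $p=(p_i)$, namely the space $R=R(p)$ of tuples $I'=(I'_i)_i$ satisfying the nesting condition, is contractible.

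Third, I would analyse $R$ using the monotonicity built into the condition. Writing $I_i=[s_i,t_i]$ and $I'_i=[s'_i,t'_i]$, the set $p_i^{-1}[s_i,t'_i]$ grows with $t'_i$ while $p_j^{-1}[s_j,s'_j]$ grows with $s'_j$, so each required inclusion $p_i^{-1}[s_i,t'_i]\sub p_j^{-1}[s_j,s'_j]$ (for $i<j$) becomes an upper bound on $t'_i$ that increases with $s'_j$, to be combined with the interval bounds $s_i\le s'_i\le t'_i\le t_i$. Nonemptiness is immediate, since taking every $t'_i$ near $s_i$ and every $s'_j$ near $t_j$ satisfies all the inclusions. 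For contractibility I would remove the coordinates one at a time: fixing all but one coordinate, the admissible values of that coordinate form a nonempty closed interval, whose endpoints are the constant interval bounds together with the monotone bounds coming from the other coordinates, and a fibrewise straight-line homotopy retracts $R$ onto the locus where that coordinate sits at the appropriate end of this interval. Each step lowers the number of free coordinates while leaving the remaining constraints of the same monotone form, so by induction $R$ deformation retracts onto a point.

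The genuine content, and the step I expect to need the most care, is this contractibility of $R$. Although each individual inclusion is a simple monotone constraint, a single endpoint $s'_i$ or $t'_i$ may carry a lower bound forced by smaller indices and an upper bound forced by larger ones, so one cannot contract every $I'_i$ to a fixed endpoint of $I_i$. Retracting instead onto the active (possibly variable) bound, as above, is what makes the induction go through; the routine part is checking that each intermediate admissible interval is nonempty, which follows from the nonemptiness of $R$ and the monotonicity of the sets $p_i^{-1}[\,\cdot\,]$. The contractibility of each $P_{U_i}$ is the geometric input, but it is standard.
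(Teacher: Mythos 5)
Your overall route is the same as the paper's: since $\Open(M)$ is a poset, $N_0\cat{X}$ is discrete, so the claim reduces to contractibility of each $X_U$; both you and the paper then fix the tuple $p$ and analyse the space of admissible subinterval tuples $I'$ using the monotonicity of the inclusions $p_i^{-1}[s_i,t'_i]\sub p_j^{-1}[s_j,s'_j]$. Your explicit treatment of the space of $p$'s via uniqueness of collars is a reasonable supplement (the paper performs this reduction without comment).

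The genuine gap is your nonemptiness step, and it is load-bearing: your contractibility argument is a retraction carried out \emph{inside} $R$, so it needs a point of $R$ to exist in the first place. The prescription ``every $t'_i$ near $s_i$ and every $s'_j$ near $t_j$'' is self-contradictory as soon as $k\ge 2$: a middle index $0<m<k$ occurs as the lower index of the constraints with $j>m$ and as the upper index of the constraints with $i<m$, so you are demanding $t'_m$ near $s_m$ and $s'_m$ near $t_m$ simultaneously, which violates $s'_m<t'_m$; that is, $[s'_m,t'_m]$ is then not a subinterval of $I_m$ at all. The conflict is not cosmetic. The pair $(i,m)$ forces $s'_m\ge\max p_m\big(p_i^{-1}[s_i,t'_i]\big)$, a definite level recording where $p_m$ sends the core $p_i^{-1}(s_i)$, and this is in general nowhere near $s_m$; consequently $t'_m$ is pushed up as well, which in turn pushes up $s'_j$ for every $j>m$, and so on. A point of $R$ must therefore be produced by choosing the intervals \emph{sequentially}, propagating these bounds (for instance upward: pick $t'_0$, then $s'_1\ge\max p_1\big(p_0^{-1}[s_0,t'_0]\big)$, where there is room below $t_1$ because this compact set lies in $U_1$, then $t'_1<t_1$, then $s'_2$, and so on). This sequential threading is exactly the content of the paper's own induction, which chooses the $s'_j$ one index at a time, each above the cores $p_i^{-1}(s_i)$ for $i<j$ and inside the open window already fixed at the index above, and only afterwards picks the $t'$'s; it is the step your simultaneous prescription skips.

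A secondary inaccuracy in the same direction: the admissible set of a single coordinate is in general a half-open rather than closed interval (nondegeneracy forces $s'_j<t'_j$ strictly, so $s'_j$ ranges over $\big[\max_{i<j}\max p_j(p_i^{-1}[s_i,t'_i]),\,t'_j\big)$), and the attained endpoints for the $s'$'s depend on the $t'$'s while those for the $t'$'s depend on the $s'$'s. So ``retract onto the appropriate end'' needs a schedule in which each target depends only on coordinates not yet retracted --- e.g.\ retract all the $s'_j$ down to their closed, continuously varying lower bounds first --- rather than an arbitrary one-at-a-time order; as written, this is concealed rather than addressed.
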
 
\begin{proof}
It suffices to prove that, for every $k$, $U\in N_k\Open(M)$ and
\eqref{item:choice-of-constructible-map} above, choices for
\eqref{item:subinterval} form a contractible space.

Note that the required condition implies
$p_i^{-1}(s_i)\sub p_j^{-1}[s_j,s'_j
)$ for $i\lneq j$.
Now the space of $s'_k$ satisfying $p_i^{-1}(s_i)\sub
p_k^{-1}[s_k,s'_k
)$ for every $i\lneq k$, is contractible.
Moreover, once $s'_j$ is chosen so we have
\[
p_i^{-1}(s_i)\sub p_j^{-1}[s_j,s'_j
)
\]
for all $i\lneq j$, then the space of $s'_{j-1}$ satisfying
$p_i^{-1}(s_i)\sub p_{j-1}^{-1}[s_{j-1},s'_{j-1}
)$ for every $i\lneq j-1$, and $p_{j-1}^{-1}[s_{j-1},s'_{j-1}]\sub
  p_j^{-1}[s_j,s'_j
)$, is contractible.
Moreover, once $s'_j$'s are all chosen, so these conditions are
satisfied, then the space of $t'_j$'s satisfying the required
condition is contractible.
This completes the proof.
\end{proof}

Before describing $X_\kappa$ for $\kappa\ge 1$, let us construct a map
$X_0\to N_0\laxcol_{\Simp^\op}N_\dot\cat{A}$ over $N_0\Simp$, to be
the simplicial level $0$ of the desired map $X_\dot\to
N_\dot\laxcol_{\Simp^\op}N_\dot\cat{A}$.
For an integer $k\ge 0$, let
\[
X_{0k}\::=\:\coprod_{U\in
  N_k\Open(M)}X_U\:\big(=\:X_0\times_{N_0\cat{X}}N_k\Open(M)\big),
\]
so $X_0=\colim_{[k]\in N_0\Simp}X_{0k}$.
We construct maps $X_{0k}\to N_k\cat{A}$ for all $k$, and define
the desired map as the colimit (coproduct) of these maps over $[k]\in
N_0\Simp$.
The map $X_{0k}\to N_k\cat{A}$ will be defined by induction on $k$ as
follows.

We first define a map $X_{00}\to N_0\cat{A}$.
Thus, suppose given $U\in\Open(M)$ and a pair consisting of
$p\colon\close{U}\to I$ and $I'\sub I$ as above.
Then define $p'\colon\close{U}\to I'$ as the composite of $p$
with the map $I\onto I'$ which collapses each of $[s,s']$ and
$[t',t]$.
Let $j'$ denote the inclusion $I'-\{t'\}\,\into\,I'$.
Then we associate to the point $(U,p,I')$ of $X_{00}$ the point
$\int_{I'}j'_!\,j'^*p'_*A$ of $N_0\cat{A}$.
This is functorial on the groupoid $X_{00}$ by the functoriality of
the push-forward construction on the space of constructible maps
(Remark \ref{rem:functoriality-of-constructible-push}), and the value
is canonically equivalent to the compactly supported homology
$\int^c_UA$.
Note that the value for $(U,p,I')$ can also be written as
$\int_{I'}j'_!\,j'^*(p'\resto{\close{U}{}'})_*A$, where
$\close{U}{}':=p^{-1}[s,t']\sub\close{U}$.

Next, we construct a map $X_{01}\to N_1\cat{A}$.
Thus, consider a point of $X_{01}$ given by a $1$-simplex $U\colon
U_0\into U_1$ of $N_\dot\Open(M)$, and $(p,I')\in X_U$.
Then we would like to construct a $1$-simplex of $N_\dot\cat{A}$ to be
associated to it.

In order to do this, define $p'_i$ to be the composite of $p_i$ with
the map $I_i\onto I'_i$ collapsing each of $[s_i,s'_i]$ and
$[t'_i,t_i]$, and define $\close{U}'_i:=p_i^{-1}[s_i,t'_i]$.
Then
$\close{U}_1=\close{U}'_0\union_{p_0^{-1}(t'_0)}(\close{U}_1-U'_0)$,
and the maps $p'_0$ and $p'_1\resto{\close{U}_1-U'_0}$ glue together
to define a map
\[
p\colon\close{U}_1\longto I'_0\union I'_1=:I,
\]
where the intervals are glued by the relation $t'_0=s'_1$.

Using this, we have the following.
Let $j_0$ denote the inclusion $I'_0-\{t'_0\}\,\into\,I'_0$, $j_1$
denote the inclusion $I-\{t'_1\}\,\into\,I$, and $q$ denote the map
$I\onto I'_0$ collapsing $I'_1$.
Then the induced augmentation of
$B:=q_{*\,}j_{1!}\,j_1^*\,p_*A$ induces a map
$B\,\to\,j_{0!}\,j_0^*B=j_{0!}\,j_0^*\,p'_{0*}A$.

Integrating this over $I'_0$, we obtain a map
\[
\int_{I'_1}\,j_{1!}\,j_1^*\,p'_{1*}A\:=\:\int_I\,j_{1!}\,j_1^*\,p_*A\,\longto\,\int_{I'_0}\,j_{0!}\,j_0^*\,p'_{0*}A.
\]
The source and the target here are models of the compactly supported
homology associated to the composites
$\close{U}_i\xrightarrow{p_i}I_i\onto[s_i,t'_i]$
for $i=1,0$ respectively, where the latter map collapses the
subinterval $[t'_i,t_i]$ of $I_i$.
The $1$-simplex in $\cat{A}$ we would like to associate is this map
with $0$-faces given by its source and target.
This $1$-simplex (as an object of the groupoid $N_1\cat{A}$) is
functorial on the groupoid $X_{01}$, since the push-forward of the
augmentation map (together with the push-forward of the algebra)
depends functorially on the space of constructible maps by Remark
\ref{rem:functoriality-of-constructible-push}.
Therefore, we have constructed a map $X_{01}\to N_1\cat{A}$ as
desired.

Next, we construct a map $X_{02}\to N_2\cat{A}$.
Thus, suppose given a $2$-simplex $U\colon U_0\into U_1\into U_2$ of
$N_\dot\Open(M)$, and $(p,I')\in X_U$.
In order to construct a $2$-simplex of $N_\dot\cat{A}$ from these
data, note that the previous constructions applied to $0$- and
$1$-faces of $(U,p,I')$ (by which we mean the faces of the nerve $U$
equipped with the restrictions of $(p,I')$ there) give what could be
the boundary of a $2$-simplex $[2]\to\cat{A}$.
In order to fill inside of this by a $2$-isomorphism to actually get a
$2$-simplex, define $p'_i\colon\close{U}_i\to I'_i$ and
$\close{U}'_i\sub\close{U}_i$ as before.
Then the maps $p'_0\colon\close{U}'_0\to I'_0$,
$p'_1\colon\close{U}'_1-U'_0\,\to\,I'_1$,
$p'_2\colon\close{U}_2-U'_1\,\to\,I'_2$ glue together to define a map
\[
p\colon\close{U}_2\longto I'_0\union I'_1\union I'_2=:I,
\]
where the union is taken under the relations $t'_i=s'_{i+1}$.
Let $q_{ij}\colon I\onto I_{ij}:=I'_i\union I'_j$ be the map
collapsing the other interval, $q_j\colon I_{ij}\to I'_j$ be the map
collapsing $I'_i$, $j_2\colon I-\{t'_2\}\,\into\,I$ be the inclusion,
and $B:=q_{01*\,}j_{2!}\,j_2^*\,p_*A$ on $I_{01}$.
Then the $2$-isomorphism we would like to find is between
$\int_{I'_0}$ of the augmentation map
$q_{0*}B\:\to\:j_{0!}\,j_0^*\,q_{0*}B$, and $\int_{I'_0}$ of the
composite
\[
q_{0*}B\xlongrightarrow{q_{0*}\varepsilon}q_{0*\,}j_{1!}\,j_1^*B\xlongrightarrow{\varepsilon}j_{0!}\,j_0^*\,q_{0*\,}j_{1!}\,j_1^*B=j_{0!}\,j_0^*\,q_{0*}B
\]
of augmentation maps.
We find an isomorphism between these, induced from the data of
multiplicativity/monoidality of the augmentation map.
Moreover, the $2$-simplex we have thus constructed again depends
functorially on $X_{02}$, since the push-forward of all data
associated with augmentation maps are functorial on the space of
constructible maps by Remark
\ref{rem:functoriality-of-constructible-push}.
Therefore, we have constructed a map $X_{02} \to N_2\cat{A}$.

Inductively, let a point $(U,p,I')\in X_{0k}$ be given by $U\in
N_k\Open(M)$ and $(p,I')\in X_U$.
Then the previous steps applied to the boundary faces of $(U,p,I')$
give a diagram of the shape of $\boundary\simp^k$ in $\cat{A}$.
We would like to fill this by a $k$-isomorphism to obtain a
$k$-simplex $[k]\to\cat{A}$, which we can then associate to
$(U,p,I')$.

In the case $k=3$, the previous step of the induction implies that the
$1$-faces of this $\boundary\simp^3$-shaped diagram are the
$1$-isomorphisms induced from the suitable instances of the
augmentation map $\epsilon$ of $A$, and the $2$-faces are the
$2$-isomorphisms induced from the appropriate instances of the
$2$-isomorphism of multiplicativity of the augmentation map
$\epsilon$.
We get a $3$-simplex in $\cat{A}$ by filling the inside of this
$\boundary\simp^3$ shape by the
$3$-isomorphism in $\cat{A}$ induced from the $3$-isomorphism of
coherence of the multiplicativity of the augmentation map.
We associate this $3$-simplex of $N_\dot\cat{A}$ to $(U,p,I')\in
X_{03}$.

In the case $k=4$, the faces of dimension up to $2$ of the
$\boundary\simp^4$-shaped diagram are similar to those in the previous
case, and the $3$-faces are the instances of $3$-simplices constructed
in the previous step from the $3$-isomorphisms of coherence of the
multiplicativity of the augmentation map.
We get a $4$-simplex by filling inside this $\boundary\simp^4$ shape
by the $4$-isomorphism induced from the $4$-isomorphism here of the
next level coherence of the multiplicativity of the augmentation map.

For a general $k$, we may assume by induction, that the previous steps
are done by similarly taking the isomorphisms of dimension up to
$k-1$, all induced from the appropriate instances of the coherence
isomorphisms.
In particular, the simplices we obtain from the boundary faces of
$(U,p,I')$, are made up of the isomorphisms induced from the coherence
isomorhisms (up to dimension $k-1$) of the multiplicativity of the
augmentation map, and the shape of $\boundary\simp^k$ in $\cat{A}$ is
therefore made up of these simplices.
Then, as promised by the inductive hypothesis, we associate to
$(U,p,I')$ the $k$-simplex $[k]\to\cat{A}$ obtained by filling inside this
$\boundary\simp^k$ shape by the $k$-isomorphism in $\cat{A}$ induced
from the $k$-isomorphism here of the next level coherence of the
multiplicativity of the augmentation map of $A$.
This construction of a $k$-simplex is again functorial on $X_{0k}$.
Therefore, we have constructed a map $X_{0k}
\to N_k\cat{A}$ in the way required by the next inductive hypothesis.
This completes the induction on $k$, and therefore the construction of
a map $X_0\to N_0\laxcol_{\Simp^\op}N_\dot\cat{A}$ over $N_0\Simp$.

\bigskip

Let us next construct the space $X_1$.
We define it as the coproduct over $N_1\cat{X}$ of certain spaces,
each of which will turn out contractible.
First, note that a point of $N_1\cat{X}$ is specified by integers
$k,\ell\ge 0$, a map $\phi\colon[k]\to[\ell]$ in $\Simp$, and $U\in
N_\ell\Open(M)$.
Given a point of $N_1\cat{X}$ specified by these data, we let the
space naturally formed by the following be the component of
$X_1$ lying over it.
\begin{enumerate}
\setcounter{enumi}{-1}
\item A point $(p,I')\in X_U$.
\item For every $i\in[k]$, a subinterval $J_i=[u_i,v_i]$ of $I'_{\phi
    i}$ such that $v_i\le u_{i+1}$ whenever $\phi i=\phi(i+1)$.
\item A map in the fundamental groupoid of the space
  \begin{equation}\label{eq:space-of-sequences}
    \Map_{/[\ell]}(\phi,I'):=\prod_{j\in[\ell]}\Map_{\le}(\phi^{-1}j,I'_j)
  \end{equation}
  (where $\Map_{\le}$ stands for the space of order preserving maps),
  from $v=(v_i)_{i\in[k]}$ to $\phi^*t'=(t'_{\phi i})_{i\in[k]}$,
  where we are identifying a point of $\Map_{/[\ell]}(\phi,I')$ in
  general with an increasing sequence $x=(x_i)_{i\in[k]}$ of points in
  $\Union_{j\in[\ell]}I'_j$ such that $x_i\in I'_{\phi i}$.
\end{enumerate}
Note, as we have claimed, that the space is contractible, so the
projection $X_1\to N_1\cat{X}$ is an equivalence.

\bigskip

More generally, for an integer $\kappa\ge 2$, we let $X_\kappa$ be
the coproduct over $N_\kappa\cat{X}$ of the following (again,
contractible) spaces.
Namely, let a point of $N_\kappa\cat{X}$ be specified by a
$\kappa$-nerve
\begin{equation}\label{eq:nerve-of-simplices}
\phi\colon[k_\kappa]\xlongrightarrow{\phi_\kappa}\cdots\xlongrightarrow{\phi_1}[k_0]
\end{equation}
and $U\in N_{k_0}\Open(M)$, where we consider
$[\kappa]$ as $\{\kappa\to\cdots\to 0\}$ for notational convenience.
Then we take the natural space formed by the following, for the
component of $X_\kappa$ to lie over the specified point of
$N_\kappa\cat{X}$.
\begin{enumerate}
\setcounter{enumi}{-1}
\item A point $(p,J^0)\in X_U$.
\item For every $\iota\in[\kappa]-\{0\}$, a family
  $J^\iota=(J^\iota_i)_{i\in[k_\iota]}$ of subintervals
  $J^\iota_i=[u^\iota_i,v^\iota_i]$ of $J^{\iota-1}_{\phi_\iota i}$,
  satisfying $v^\iota_i\le u^\iota_{i+1}$ if $\phi_\iota
  i=\phi_\iota(i+1)$.
\item For every $\iota\in[\kappa]-\{0\}$, a map in the
  fundamental groupoid of the space
  $\Map_{/[k_{\iota-1}]}(\phi_\iota,J^{\iota-1})$ (see
  (\ref{eq:space-of-sequences}) above),
  from $v^\iota$ to $\phi_\iota^*(v^{\iota-1})$.
\end{enumerate}
Note that this space is contractible as claimed, so the projection
$X_\kappa\to N_\kappa\cat{X}$ is an equivalence.

\bigskip

It follows that there uniquely exists a pair consisting of a
simplicial structure on $X_\dot$ and a simplicial structure on the
level-wise map $X_\dot\to N_\dot\cat{X}$ given by the projections.
We will want to use a more concrete description of this.

In order to get a desired description of the simplicial structure, for
$\iota\in[\kappa]$, let $\psi_\iota$ denote the composite
$\phi_1\cdots\phi_\iota\colon[k_\iota]\to[k_0]$.
Then note that the pair $(\psi_\iota^*p,J^\iota)$ determines a point
of $X_{\psi_\iota^*U}$.
Now suppose given a map $f\colon[\lambda]\to[\kappa]$ in $\Simp$.
We would like to define a map $f^*\colon X_\kappa\to X_\lambda$.
In order to do this, suppose given a point of $X_\kappa$ over the
point $(\phi,U)\in N_\kappa\cat{X}$, specified as above.
Then we let $f^*$ associate to this point a point of $X_\lambda$ lying
over $f^*(\phi,U)=\big(f^*\phi,(f^*\phi)_0^*U\big)\in
N_\lambda\cat{X}$, specified by
\begin{enumerate}
\setcounter{enumi}{-1}
\item the point $(\psi_{f(0)}^*p,J^{f(0)})\in
  X_{\psi_{f(0)}^*U}=X_{(f^*\phi)_0^*U}$,
\item $f^*J=(J^{f(\iota)})_{\iota\in[\lambda]-\{0\}}$, and
\item for every $\iota\in[\lambda]-\{0\}$, the map
  $v^{f(\iota)}\to(f^*\phi)_\iota^*(v^{f(\iota-1)})$
  in the fundamental groupoid of
  $\Map_{/[k_{f(\iota-1)}]}\left((f^*\phi)_\iota,J^{f(\iota-1)}\right)$,
  obtained by composing the paths determining the point of $X_\kappa$.
\end{enumerate}
This extends to a map $X_\kappa\to X_\lambda$ by functoriality, and we
define $f^*$ as this map.
By the associativity of composition of maps in the fundamental
groupoids, this construction defines a functoriality on $\Simp$, and
we have thus described a simplicial structure of $X_\dot$, lying over
the simplicial structure of $N_\dot\cat{X}$.

\bigskip

Let us finally extend the map $X_0\to
N_0\laxcol_{\Simp^\op}N_\dot\cat{A}$ over $N_0\Simp$ we have
constructed, to a full simplicial map over $N_\dot\Simp$.

For this purpose, suppose given the following partial data towards a
$1$-simplex of $X_\dot$.
The data we consider are a $1$-simplex $(\phi,U)\in N_1\cat{X}$
($\phi\colon[k_1]\to[k_0]$), which corresponds to a component of
$X_1$, a family $p=(p_j)_{j\in[k_0]}$ of constructible maps
$p_j\colon\close{U}_j\to I_j$ as before, $J^0=(J^0_j)_{j\in[k_0]}$
($J^0_j=[u^0_j,v^0_j]\sub I_j$) such that $(p,J^0)\in X_U$.
Then we construct a map $\Map_{/[k_0]}(\phi,J^0)\to N_{k_1}\cat{A}$ as
follows.
Namely, let a point of $\Map_{/[k_0]}(\phi,J^0)$ be represented by an
increasing sequence $x=(x_i)_{i\in[k_1]}$ of $\Union_{[k_0]}J^0$.
Then we construct a $k_1$-simplex $[k_1]\to\cat{A}$ from this data
by making the following modifications to the construction of the map
$X_{0k_1}\to N_{k_1}\cat{A}$ we have done before.

To be precise on the comparison with the previous construction,
what we shall construct is a $k_1$-simplex of $N_\dot\cat{A}$ which
specialises to the $k_1$-simplex associated to
$(\phi^*U,\phi^*p,J^1)\in X_{0k_1}$ (denote it by
$\int^c_{\phi^*U,\phi^*p,J^1}A$) if $x=v^1$ for a family
$J^1=(J^1_i)_{i\in[k]}$ of subintervals $J^1_i=[u^1_i,v^1_i]\sub
J^0_{\phi i}$ satisfying the conditions we have described in the
definition of the space $X_1$, so $(\phi^*p,J^1)\in X_{\phi^*U}$.
The modification will be made to the construction of
$\int^c_{\phi^*U,\phi^*p,J^1}A$ from the data
$(\phi^*U,\phi^*p,J^1)\in X_{0k_1}$.
Its description as follows.

Firstly, by denoting $p_{\phi i}^{-1}[s_{\phi i},v^1_i]$ by
$\close{V}_i$, the construction of $\int^c_{\phi^*U,\phi^*p,J^1}A$
used the constructible map
\[
\Union_{i\in[k_1]}q_i\resto{\close{V}_i-V_{i-1}}\colon\,\close{V}_{k_1}\longto\Union_{i\in[k_1]}J^1_i,
\]
where $q_i\colon\close{U}_{\phi i}\xrightarrow{p_{\phi i}}I_{\phi
  i}\onto J^1_i$.
In the construction for $x$, we instead use
\[
\Union_{j\in\phi[k_1]}p'_j\resto{\close{U}'_j-U'_{j'}}\colon\,\close{U}_{\phi(k_0)}\longto\Union_{j\in\phi[k_1]}J^0_j,
\]
where $j'$ denotes the element of $\phi[k_1]\sub[k_0]$, previous to
$j$.
Moreover, whenever we push-forward an algebra to $J^1_i$ in the
original construction, we instead push the corresponding algebra
forward to $[s'_{\phi i},x_i]$.
The rest of the construction will be unchanged.
The construction is functorial on $\Map_{/[k_0]}(\phi,J^0)$.

Note that, for the point $x=\phi^*(v^0)\in\Map_{/[k_0]}(\phi,J^0)$, we
obtain the $k_1$-simplex $\phi^*\int^c_{U,p,J^0}A$.
In particular, if we are given a $1$-simplex of $X_\dot$ in the
component for $(\phi,U)\in N_1\cat{X}$, specified by $p$,
$J=(J^\iota)_{\iota\in[1]}$ as above, and a map $\alpha\colon
v^1\equivto\phi^*(v^0)$ in $\Map_{/[k_0]}(\phi, J^0)$, then $\alpha$
induces an equivalence
$\int^c_{\phi^*U,\phi^*p,J^1}A\equivto\phi^*\int^c_{U,p,J^0}A$, or
equivalently, a (Cartesian) map
$\int^c_{\phi^*U,\phi^*p,J^1}A\to\int^c_{U,p,J^0}A$ in
$\laxcol_{\Simp^\op}N_\dot\cat{A}$, covering the map $\phi$ in
$\Simp$.

Using this, for every fixed $\kappa\ge 1$, we construct the map
$X_\kappa\to N_\kappa\laxcol_{\Simp^\op}N_\dot\cat{A}$ over
$N_\kappa\Simp$ as follows.
Namely, let a point of $X_\kappa$ in the component for $(\phi,U)\in
N_\kappa\cat{X}$ be specified by $(p,J,\alpha)$,
$J=(J^\iota)_{\iota\in[\kappa]}$,
$\alpha=(\alpha^\iota)_{\iota\in[\kappa]-\{0\}}$, $\alpha^\iota\colon
v^\iota\equivto\phi_\iota^*(v^{\iota-1})$, as before.
Then applying the above construction to the $1$-faces of this
$\kappa$-simplex connecting the adjacent vertices, we obtain a
sequence of maps
\begin{equation}\label{eq:nerve-of-compactly-supported-homology}
\int^c_{\psi_\kappa^*U,\psi_\kappa^*p,J^\kappa}A\longto\cdots\longto\int^c_{\psi_\iota^*U,\psi_\iota^*p,J^\iota}A\longto\cdots\longto\int^c_{U,p,J^0}A
\end{equation} 
in $\laxcol_{\Simp^\op}N_\dot\cat{A}$.
Then using composition of maps in $\laxcol_{\Simp^\op}N_\dot\cat{A}$,
we obtain a $\kappa$-simplex
$[\kappa]\to\laxcol_{\Simp^\op}N_\dot\cat{A}$.
Since the construction is functorial on $X_\kappa$, we obtain a
desired map.

Moreover, the collection over $[\kappa]\in\Simp$ of these maps
$X_\kappa\to N_\kappa\laxcol_{\Simp^\op}N_\dot\cat{A}$ over
$N_\kappa\Simp$, has a functoriality in $[\kappa]\in\Simp$,
coming from our construction of the simplicial structure of $X_\dot$
by compositions (and their associativity) of maps in the fundamental
groupoids of spaces of increasing sequences we used, and the
construction of the sequence
\eqref{eq:nerve-of-compactly-supported-homology} by functoriality on
these groupoids.
This completes the construction of a functoriality of
$U\mapsto\int^c_UA$ on $\Open(M)^\op$.

\subsection{Symmetric monoidality}
\label{sec:monoidality}
Next, we would like to give the compactly supported homology functor
$\int^c_?A\colon\Open(M)^\op\to\cat{A}$
a natural symmetric monoidal structure.
Recall from Section \ref{sec:symmetric-monoidal}
that, for us, this means extending the functor to a map of the
functors $\Fin_*\to\Cat$ (i.e., ``pre-$\Gamma$-categories'').
The pre-$\Gamma$-category to be the target here is the underlying
functor $S_+\mapsto\cat{A}^S$ of the symmetric monoidal category
$\cat{A}$.
The source is the functor on $\Fin_*$ defined by
$S_+\mapsto\Open^{(S)}(M)^\op$, where $\Open^{(S)}(M)$ is the full
subposet of $\Open(M)^S$ consisting of families $U=(U_s)_{s\in S}$ of
pairwise disjoint open submanifolds of $M$, indexed by $S$.

Thus, it suffices to show that our construction of the functor
$\Open(M)^\op\to\cat{A}$ extends to $\Open^{(S)}(M)^\op\to\cat{A}^S$
in a way functorial in $S_+$.
This can be done by defining
$\cat{X}^{(S)}:=\laxcol_{\Simp^\op}N_\dot\Open^{(S)}(M)^\op$, and
concretely constructing simplicial spaces $X^{(S)}$ and maps
\[
N_\dot\cat{X}^{(S)}\longequivfrom
X^{(S)}_\dot\longto
N_\dot\laxcol_{\Simp^\op}(N_\dot\cat{A})^S=N_\dot\laxcol_{\Simp^\op}N_\dot\cat{A}^S
\]
over $N_\dot\Simp$, functorially in $S_+$, extending the previous
construction from the case where $S$ is one point.

We define the simplicial space $X^{(S)}_\dot$ so the $\kappa$-th space
is the coproduct over $N_\kappa\cat{X}^{(S)}$ of the following
(contractible) spaces.
Namely, let $(\phi,U)\in N_\kappa\cat{X}^{(S)}$, where $\phi$ is a
$\kappa$-nerve \eqref{eq:nerve-of-simplices} in $\Simp$, and $U$ is a
$k_0$-simplex $U_0\into\cdots\into U_{k_0}$ of $N_\dot\Open^{(S)}(M)$,
where $U_i=(U_{is})_{s\in S}$ is a family of disjoint open submanifolds of
$M$, and each $U_{*s}$ is a $k_0$-simplex of $N_\dot\Open(M)$.
Then we let the component of $X^{(S)}_\kappa$ corresponding to
$(\phi,U)$ be the same as the component of $X_\kappa$ corresponding to
$(\phi,\Disj_SU)$ of $N_\kappa\cat{X}$.
As we have observed before, these spaces are contractible, so the
projection $X^{(S)}_\kappa\to N_\kappa\cat{X}^{(S)}$ is an
equivalence.
Note that a concrete description of the unique lift of the simplicial
structure of $N_\dot\cat{X}^{(S)}$ to $X^{(S)}_\dot$, is obtained by
pulling back the concrete description of the lift of the simplicial
structure along $X_\dot\to N_\dot\cat{X}$.

Moreover, since the map $X^{(S)}_\dot\to N_\dot\cat{X}^{(S)}$ is an
equivalence, the functoriality in $S_+$ of $N_\dot\cat{X}^{(S)}$ lifts
for $X^{(S)}_\dot$ uniquely.
We will want to use the following concrete description of a lift.
Namely, suppose given a map $f\colon S_+\to T_+$ in $\Fin_*$.
Then for a simplex $U$ of $N_\dot\Open^{(S)}(M)$, we have
$\Disj_Tf_!U=\Disj_{f^{-1}T}U$ for the $\Gamma$-structure map $f_!$ of
$\Open(M)$.
Therefore, if $(\phi,U)$ is a simplex of $N_\kappa\cat{X}^{(S)}$, then
the data for specifying a point in the corresponding component of
$X^{(S)}_\kappa$, namely, in the component of $X_\kappa$ corresponding
to $(\phi,\Disj_SU)\in N_\kappa\cat{X}$, can be restricted to
$\Disj_Tf_!U$, to specify a point in the component of $X^{(T)}_\kappa$
corresponding to $f_!(\phi,U)=(\phi,f_!U)\in N_\kappa\cat{X}^{(T)}$.
This defines a map $X^{(S)}_\kappa\to X^{(T)}_\kappa$, which is
functorial in $\kappa$ by our concrete description of the simplicial
structures.
By taking the resulting map $N_\kappa\cat{X}^{(S)}\to
N_\kappa\cat{X}^{(T)}$ to be the structure map $f_!$, we obtain a
concrete description of a $\Gamma$-structure of $X_\dot$, lifting
that of $N_\dot\cat{X}$.

In order to construct a map
$X^{(S)}_\dot\to N_\dot\laxcol_{\Simp^\op}(N_\dot\cat{A})^S$ over
$N_\dot\Simp$, note that
$N_\kappa\laxcol_{\Simp^\op}(N_\dot\cat{A})^S$ is the collection of
the components of
$N_\kappa\left(\laxcol_{\Simp^\op}N_\dot\cat{A}\right)^S$ lying over
the diagonal of $N_\kappa\Simp^S$.
Thus, we shall construct a map
$X^{(S)}_\dot\to
\left(N_\dot\laxcol_{\Simp^\op}N_\dot\cat{A}\right)^S$ landing in
these components.
It suffices to describe the component for each $s\in S$ of this map.
We let it be the composite
\[
X^{(S)}_\dot\xlongrightarrow{\pr_s}X_\dot\xlongrightarrow{\int^c_?A}N_\dot\laxcol_{\Simp^\op}N_\dot\cat{A},
\]
where $\pr_s$ is the map corresponding to the inclusion $\{s\}_+\into
S_+$ in the $\Gamma$-structure of $X_\dot$, which has been concretely
described above.
This gives a map $X^{(S)}_\dot\to
N_\dot\laxcol_{\Simp^\op}(N_\dot\cat{A})^S$ over
$N_\dot\Simp$, as desired.

Finally, we want functoriality of these maps in $S_+$.
However, using the concrete description of the $\Gamma$-structure of
$X_\dot$, this results immediately from the
symmetric monoidality of $A$ and the augmentation map of $A$, and our
assumption on the monoidal structure of $\cat{A}$ (see Section
\ref{sec:manifold-algebra}).

\section{Koszul duality for factorisation algebras}
\label{sec:duality}
\setcounter{subsection}{-1}
\setcounter{equation}{-1}
\subsection{The Koszul dual of a factorisation algebra}
\label{sec:construction}

We have seen in the previous section that compactly supported homology
with coefficients in an augmented locally constant factorisation
algebra $A$, is contravariantly functorial, and symmetric monoidal, in
the open submanifolds.
Let us denote this symmetric monoidal functor by $A^+$.
Namely, for an open submanifold $U$ of the manifold $M$ on which $A$
is defined, we denote $A^+(U):=\int^c_UA$.

We can restrict this coalgebra on $\Open(M)$ to $\Dis(M)$, and
consider it as a coalgebra on the multicategory $\Disk(M)$.
Let us denote this coalgebra by $A^!$.

\begin{definition}\label{def:koszul-dual}
Let $\cat{A}$ be a symmetric monoidal category which is closed under
sifted colimits, and whose monoidal multiplication preserves sifted colimits.
Let $A$ be an augmented locally constant factorisation algebra on a
manifold $M$, taking values in $\cat{A}$.
Then the \kore{Koszul dual} of $A$ is defined as the augmented locally
constant coalgebra $A^!$ on $\Disk(M)$, taking values in $\cat{A}$.
\end{definition}

Even though $A^!$ came from a functor $A^+$, $A^+$ may not in general
satisfy any reasonable descent property.
However, if the symmetric monoidal structure of
$\cat{A}$ behaves well with both sifted colimits \emph{and} sifted
limits, then the results of \cite[Section 2]{descent}, in particular,
Theorem 2.11, can be applied in $\cat{A}^\op$.
In particular, there is a universal way to extend $A^!$ to a functor
on $\Open(M)$ satisfying factorising descent.
In this case, one may expect $A^+$ to be close to the functor extended
from $A^!$ by descent.

One of the cases is where $\cat{A}$ is the category $\Space^\op$ of
the opposite spaces with the coCartesian symmetric monoidal structure.
In this case, $A^+$ satisfies factorising descent as often as one may
expect.
This is indeed Lurie's ``nonabelian Poincar\'e duality'' theorem
\cite{higher-alg} (which is closely related to earlier results of
Segal \cite{segal-config}, McDuff \cite{mcduff} and Salvatore
\cite{salvato}) as we shall discuss in Section
\ref{sec:poincare-duality}.

Thus, compactly supported factorisation homology gives a context
generalising the context for this theorem.
As another case where the monoidal structure behaves well with sifted
colimits and sifted limits, we shall analyse in
Section \ref{sec:poincare-duality}, the case where $\cat{A}$ is
$\Space$ with the Cartesian symmetric monoidal structure.
We will find in this case that the theorem of Lurie's type (which in
fact is equivalent to Lurie's theorem) admits a
refinement which does not seem to exist in the opposite context (see
Remark \ref{rem:opposite-gromov}).

In the later sections, we shall describe a result in which the
symmetric monoidal structure is \emph{not} required to behave well
with sifted limits, but the behaviour of the functor $A^+$ can still
be nice thanks to some additional structure on the target category.

\bigskip

For the remainder of this section, we shall see some simple examples
of the Koszul dual coalgebras.
More specifically, we shall see instances of the following,
easy consequence of the constructions.
Let us denote the category of augmented locally constant
factorisation algebras by $\Alg_{M,*}(\cat{A})$, and the category of
augmented locally constant coalgebras on $\Disk(M)$ by
$\Coalg_{M,*}(\cat{A})$.
\begin{proposition}\label{prop:relating-koszul-duals}
Let $\cat{A}$ and $\cat{B}$ be symmetric monoidal category which are
closed under sifted colimits, and whose
symmetric monoidal multiplication preserves sifted colimits.
Let $F\colon\cat{A}\to\cat{B}$ be a symmetric monoidal functor which
preserves sifted colimits.
Then, the canonical map filling the square
\[\begin{tikzcd}
\Alg_{M,*}(\cat{A})\arrow{d}[swap]{F}\arrow{r}{\blank^!}&\Coalg_{M,*}(\cat{A})\arrow{d}{F}\\
\Alg_{M,*}(\cat{B})\arrow{r}{\blank^!}&\Coalg_{M,*}(\cat{B})
\end{tikzcd}\]
is an equivalence.
\end{proposition}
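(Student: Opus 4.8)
The plan is to reduce the statement to the pointwise claim that the symmetric monoidal functor $F$ commutes with compactly supported homology, namely that for each object $D$ of $\Disk(M)$ the canonical comparison $\int^c_D(F\circ A)\to F\bigl(\int^c_D A\bigr)$ is an equivalence, and then to promote these pointwise equivalences to an equivalence of coalgebras. First I would verify that the two vertical functors are well defined. Since $F$ is symmetric monoidal it sends the unit to the unit and therefore carries an augmented algebra to an augmented algebra and a coalgebra to a coalgebra; since it moreover preserves sifted colimits, it commutes with the left Kan extensions (equivalently the excision colimits) that characterise factorisation algebras, so $F\circ A$ is again a locally constant factorisation algebra and $F\circ(A^!)$ is again a locally constant coalgebra on $\Disk(M)$. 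Local constancy is a pointwise condition and is preserved automatically.

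The heart of the matter is that every ingredient in the definition of $\int^c_U A$ is either a sifted colimit or a monoidal operation, and $F$ respects both. Recall from Section \ref{sec:object} that $\int^c_U A=A(U)\otimes_{A(\boundaryc{U})}\unity$, where $A(U)=\int_U A=\colim_{\Dis(U)}A$ is the factorisation-homology colimit, where the algebra $A(\boundaryc{U})$ together with its modules is obtained by pushing $A$ forward along a constructible map (and is thus again assembled from such colimits), and where the relative tensor product is the geometric realisation $\colim_{\Simp^\op}B_\dot(K,B,L)$ of a bar construction. The indexing categories $\Dis(U)$ and $\Simp^\op$ are sifted, so by hypothesis $F$ preserves all these colimits; since $F$ is symmetric monoidal it commutes with the tensor products appearing in each term of the bar construction and sends $\unity$ to $\unity$, and it carries the augmentation of $A$ (hence the module structures over $A(\boundaryc{U})$) to the corresponding data for $F\circ A$. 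Combining these, the canonical comparison $\int^c_U(F\circ A)\equivto F\bigl(\int^c_U A\bigr)$ is an equivalence for every $U$.

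It remains to upgrade these object-wise equivalences to an equivalence of the coalgebras $(F\circ A)^!$ and $F\circ(A^!)$ compatible with the full symmetric monoidal coalgebra structure. For this I would push $F$ through the explicit simplicial-space models $X_\dot$ and $X^{(S)}_\dot$ built in Sections \ref{sec:functoriality} and \ref{sec:monoidality}: every structure map there is manufactured from push-forwards of the algebra and its augmentation, from the monoidal operations of $\cat{A}$, and from the sifted colimits above, so applying $F$ levelwise produces precisely the corresponding model for $F\circ A$ together with the comparison as a map of simplicial spaces over $N_\dot\Simp$, functorially in $S_+$. I expect the main obstacle to be exactly this coherence bookkeeping rather than any new geometric input: one must check that $F$ intertwines the $\Gamma$-structure and the fundamental-groupoid data of the spaces of increasing sequences used to encode the functoriality in $\Open(M)^\op$. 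Once this is verified, the comparison is a levelwise equivalence of simplicial spaces, hence an equivalence, and its naturality in $A$ exhibits the required filling of the square as an equivalence.
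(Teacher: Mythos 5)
Your proposal is correct and is essentially the paper's own argument: the paper states this proposition without a separate proof, calling it an ``easy consequence of the constructions,'' and your elaboration---that every ingredient of $\int^c_UA$ and of its coalgebra structure (the factorisation-homology colimits, the bar construction over $\Simp^\op$, the monoidal operations and unit, and the augmentation/pushforward data encoded in the models $X_\dot$ and $X^{(S)}_\dot$) is preserved by a symmetric monoidal functor preserving sifted colimits---is precisely what that phrase refers to. One small correction: $\Dis(U)$ is not itself sifted; rather, for locally constant algebras the colimit over $\Dis(U)$ agrees by cofinality with the colimit over the sifted category $\ccat{D}(U)$, which is what makes it a sifted colimit preserved by $F$.
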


One example of a functor $F$ as in the proposition is given by the
Koszul duality functor.
Namely, let $\cat{A}$ be a symmetric monoidal category which is closed
under sifted colimits, and whose symmetric monoidal multiplication preserves
sifted colimits.
Then we can apply the proposition to the functor
$\blank^!\colon\Alg_{N,*}(\cat{A})\to\Coalg_{N,*}(\cat{A})$, where $N$
is any manifold without boundary.

Let us apply the proposition on another manifold $M$.
Then we obtain that the canonical map filling the square
\begin{equation}\label{eq:iterating-koszul}
\begin{tikzcd}
\Alg_{M,*}(\Alg_{N,*})\arrow{d}[swap]{\blank^!}\arrow{r}{\blank^!}&\Coalg_{M,*}(\Alg_{N,*})\arrow{d}{\blank^!}\\
\Alg_{M,*}(\Coalg_{N,*})\arrow{r}{\blank^!}&\Coalg_{M,*}(\Coalg_{N,*}),
\end{tikzcd}
\end{equation}
where we have dropped $\cat{A}$ from our notation, is an equivalence.

In fact, it is immediate to see that the diagonal map here is
canonically equivalent to the composite
\[
\Alg_{M\times N,*}\xrightarrow{\blank^!}\Coalg_{M\times
  N,*}\xrightarrow{\mathrm{restriction}}\Coalg_{M,*}(\Coalg_{N,*}),
\]
through the identification $\Alg_{M\times
  N,*}\equivto\Alg_{M,*}(\Alg_{N,*})$ by the restriction functor
following from \cite[Theorem 3.14]{descent}.

For example, the Koszul dual on $\R^n$ can be understood as the result
of iteration of taking the Koszul dual on $\R^1$.
The definition in terms of the compactly supported homology gives a
coordinate-free description of the same thing.

\bigskip

We shall give a few more examples of symmetric monoidal functors to
which Proposition \ref{prop:relating-koszul-duals} applies.
(We will be very far from being comprehensive (and from being most
general).
Our purpose is to discuss just a few of many examples for
illustration.)
\begin{example}
Fix a base field (in the usual discrete sense) of characteristic $0$,
and let $(\Lie,\directsum)$ and $(\Mod,\directsum)$ denote the
symmetric monoidal category of Lie algebras and of modules
respectively, over the base field, with the symmetric monoidal
structure given by the direct sum operations.
(We may consider dg Lie algebras and dg modules (chain complexes).)
When we do not specify the symmetric monoidal structure in the
notation as above, let us understand we are taking the symmetric
monoidal structures given by the tensor product (over the base field).

Then any of the symmetric monoidal functors appearing in the following
commutative diagram preserves sifted colimits.
\[\begin{tikzcd}
{}&(\Mod,\directsum)\arrow{ld}[swap]{=}\arrow[hook]{d}\arrow{r}{\susp}[swap]{\equiv}
&(\Mod,\directsum)\arrow{d}[swap]{\Sym}\\
(\Mod,\directsum)
&(\Lie,\directsum)\arrow{l}{\mathrm{forget}}\arrow{r}[swap]{C_\dot}\arrow{d}{U}
&\Coalg_{\Com,*}\arrow{r}{\mathrm{forget}}\arrow{d}{\mathrm{forget}}
&\Coalg_\Com\arrow{d}{\mathrm{forget}}\\
&\Alg_{E_n,*}\arrow{r}[swap]{\blank^!}
&\Coalg_{E_n,*}\arrow{r}[swap]{\mathrm{forget}}
&\Coalg_{E_n}\arrow{r}[swap]{\mathrm{forget}}
&\Mod,
\end{tikzcd}\]
where $\susp=\blank[1]$ is the suspension functor,
$(\Mod,\directsum)\into(\Lie,\directsum)$ is the inclusion of Abelian
Lie algebras, $\Com$ stands for ``commutative'' ($=E_\infty$),
$C_\dot$ is the Lie algebra homology functor, and $U$
is the enveloping $E_n$-algebra functor (where $n\ne\infty$).
\end{example}

Proposition \ref{prop:relating-koszul-duals} gives a result
on comparison of the Koszul dual coalgebras through any of these
functors.

\begin{corollary}
The functor $\blank^!\colon\Alg_M(\Lie,\directsum)\to\Sh_M(\Lie)$ is
an equivalence.
Compactly supported homology and cohomology satisfy descent.
\end{corollary}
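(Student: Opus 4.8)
The plan is to exploit that $(\Lie,\directsum)$ is a \emph{Cartesian} symmetric monoidal category: the direct sum is the categorical product in $\Lie$ (the finite product, not the coproduct, of Lie algebras). Consequently its monoidal multiplication preserves all limits, and it preserves sifted colimits as well, these being computed on underlying chain complexes, where finite products coincide with finite coproducts and hence commute with all colimits. Thus $\cat{A}=(\Lie,\directsum)$ satisfies the standing hypotheses of Section \ref{sec:manifold-algebra}; and since the product moreover preserves sifted \emph{limits} in $\cat{A}$, the induced monoidal structure on $\cat{A}^\op$ preserves sifted colimits, so those same hypotheses hold for $\cat{A}^\op$. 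This is the doubly good situation anticipated after Definition \ref{def:koszul-dual}, in which the descent theory of \cite{descent} applies both in $\cat{A}$ and in $\cat{A}^\op$.

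First I would record the identification $\Coalg_M(\Lie,\directsum)\equivwith\Sh_M(\Lie)$. Because the monoidal structure is Cartesian, an $E_n$-coalgebra structure is uniquely the one carried by the diagonal, so a locally constant factorisation coalgebra is no more than its underlying functor; the comultiplication being the diagonal forces the value on a disjoint union to be the product $C(U)\directsum C(V)$, while the factorising codescent furnished by applying Theorem \ref{thm:factorising-seifert-vk} in $\cat{A}^\op$ becomes the limit condition over covers. For locally constant functors these are exactly the sheaf axioms, giving the claimed identification. Under it, $A^+$ lands in $\Sh_M(\Lie)$ as soon as one knows that $A^+$ satisfies excision, i.e.\ is a genuine factorisation coalgebra; I would establish this directly, observing that in the Cartesian setting the relative tensor products of Section \ref{sec:duality} become fibre products and the excision condition becomes a Mayer--Vietoris pullback square, which holds because the product is compatible with the relevant sifted limits. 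This is the Cartesian-context Poincar\'e duality of Section \ref{sec:poincare-duality}, with $\Space$ replaced by $\Lie$, and it shows that compactly supported homology $A^+=\int^c_\bullet A$ satisfies descent as a sheaf. Dually, Theorem \ref{thm:factorising-seifert-vk} applied in $\cat{A}$ itself shows that the factorisation homology $\int_\bullet A$ satisfies cosheaf descent; together these give the second assertion of the corollary for homology and cohomology.

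It then remains to prove that $\blank^!\colon\Alg_M(\Lie,\directsum)\to\Sh_M(\Lie)$ is an equivalence. Since both $\int_\bullet A$ and $\int^c_\bullet A$ now satisfy full descent, each side is determined by its restriction to disjoint unions of disks, so it suffices to prove the statement locally and then descend. Over a disk, $\blank^!$ is the local Koszul duality of \cite{local}; composing with the Cartesian identification $\Coalg_{E_n}(\Lie,\directsum)\equivwith\Lie$ of the previous paragraph, Theorem \ref{thm:koszul-duality-e-n} says precisely that a positive augmented $E_n$-algebra in $(\Lie,\directsum)$ is carried by its Koszul dual to its bare underlying (copositive) Lie algebra, equivalently to a locally constant sheaf. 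Globalising these equivalences over $\Disk(M)$, and using that $\blank^!$ intertwines the descent on the algebra side with that on the sheaf side, yields the asserted equivalence.

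The main obstacle is this last passage from the local Koszul equivalence to the global one, together with the direct verification of excision for $A^+$. A locally constant factorisation algebra valued in the Cartesian category $(\Lie,\directsum)$ carries genuine $E_n$-multiplicative data invisible on the underlying local system, and only the local Koszul duality of \cite{local} (Theorem \ref{thm:koszul-duality-e-n}), with the characteristic-zero hypothesis, lets one trade this data for the bare sheaf $A^!$; checking that this trade is natural and compatible with restriction to all disjoint unions of disks, so that the local equivalences assemble, is where the work lies. In contrast to the general Verdier equivalence of Theorem \ref{thm:verdier-intro}, which requires passing to a completion of $\cat{A}$, here the Cartesian structure makes the relevant limits and colimits exact enough that the comparison is an equivalence on the nose, and no completion is needed.
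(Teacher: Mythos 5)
Your identification of $\Coalg_M(\Lie,\directsum)$ with $\Sh_M(\Lie)$ via the Cartesian structure is fine and matches what the paper uses implicitly, but the two steps that carry all the weight of your argument both have genuine gaps. First, excision for $A^+$: in a Cartesian monoidal category the excision comparison map goes from a relative tensor product, i.e.\ a bar construction (a sifted \emph{colimit}), to a cotensor product, i.e.\ a fibre product over the coalgebra $A^+(\interior{U}\intersect\interior{V})$ (a \emph{limit}); it is not itself a Mayer--Vietoris pullback square, and ``the product is compatible with the relevant sifted limits'' is not a reason for it to be an equivalence. The paper's own treatment of the Cartesian case (Section \ref{sec:poincare-duality}) shows this step genuinely needs a hypothesis: the Proposition there assumes every stalk is group-like, Lemma \ref{lemma:algebraic-unstable} is exactly where group-likeness enters, and Theorem \ref{thm:gromov} needs $M$ open. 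If your justification were valid, it would equally prove the statement for arbitrary (non-group-like) monoids in $\Space$, which is false. For $(\Lie,\directsum)$ the needed group-likeness is in fact automatic --- after applying the forgetful functor to $(\Mod,\directsum)$ any monoid multiplication becomes addition, so the shear map is an equivalence on underlying modules, hence an equivalence --- but you neither state nor use this, and it is precisely the missing ingredient (and even then one would need an analogue of Lemma \ref{lemma:algebraic-unstable} for $\Lie$ rather than for spaces).

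Second, the local-to-global step: you invoke Theorem \ref{thm:koszul-duality-e-n} over a disk, but that theorem concerns positive augmented $E_n$-algebras in a \emph{complete soundly filtered stable} category. $(\Lie,\directsum)$ is not stable, carries no specified complete filtration, and the corollary imposes no positivity hypothesis on objects of $\Alg_M(\Lie,\directsum)$; also, that theorem has no characteristic-zero hypothesis --- characteristic zero enters the paper elsewhere, for $C_\dot$ and $\Sym$ --- so the local equivalence you appeal to is simply not available from it, and your closing claim that ``no completion is needed'' is an assertion, not an argument. The paper's proof avoids both problems with one observation you never make: the forgetful functor $(\Lie,\directsum)\to(\Mod,\directsum)$ is symmetric monoidal, preserves sifted colimits, and \emph{reflects equivalences}, so by Proposition \ref{prop:relating-koszul-duals} every comparison map in sight (the descent maps and the unit and counit of the duality) may be checked after forgetting to $(\Mod,\directsum)$; there algebras are cosheaves, coalgebras are sheaves, $\blank^!$ is the classical Verdier functor $\Cosh_M(\Mod)\to\Sh_M(\Mod)$, and everything is an equivalence by inspecting stalks. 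Your outline can be repaired, either by inserting the group-likeness lemma and an honest local duality argument for $(\Lie,\directsum)$, or, much more simply, by this reduction to $\Mod$.
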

\begin{proof}
$\blank^!\colon\Alg_M(\Mod,\directsum)\to\Coalg_M(\Mod,\directsum)$ is
the Verdier functor $\Cosh_M(\Mod)\to\Sh_M(\Mod)$, and is easily seen
to be an equivalence by looking at what it does at the level of
stalks.

The result follows since the functor $\Lie\to\Mod$ reflects
equivalences.
\end{proof}

The functor
\[
\Sh_M(\Lie)\xrightarrow[\equiv]{\blank^!}\Alg_M(\Lie,\directsum)\xrightarrow{C_\dot}\Alg_M(\Mod)
\]
is particularly interesting since by the work of Costello and Gwilliam
\cite{cg}, \cite{gwilliam}, for a particular sheaf $\lie{g}$ of Lie
algebras over $\C[[\hbar]]$ (the Heisenberg Lie algebras), the
factorisation algebra $C_\dot(\lie{g}^!)$ is the factorisation algebra
of observables of the (deformation) quantisation of a free classical
field theory, in the framework of \cite{cg}.

Proposition \ref{prop:relating-koszul-duals} applies to this functor.
Note that if $M$ is a Euclidean space, then the category of sheaves of
Lie algebras is just the category of Lie algebras, since our sheaves
are assumed to be locally constant.

The compactly supported homology of the factorisation algebra
$C_\dot(\lie{g}^!)$ is easy to describe.
Namely, we have
\[
\int^c_UC_\dot(\lie{g}^!)\:=\:C_\dot\int^c_U\lie{g}^!\:=\:C_\dot(\lie{g}(U)).
\]
More on this will be discussed in Section \ref{sec:variant}.

\subsection{Descent properties of compactly supported factorisation
  homology}
\label{sec:poincare-duality}
In this section, we examine the Koszul duality in particular
situations where the monoidal structure preserves both sifted colimits
and sifted limits variable-wise.

As a first example, we observe that Lurie's ``nonabelian Poincar\'e
duality'' theorem \cite{higher-alg} we have introduced in Section
\ref{sec:koszul-intro} is a theorem about the Koszul duality for
factorisation algebras.
Indeed, there, we have stated the theorem in terms of a functor $E^+$
obtained from a sheaf $E$ of spaces, by taking compactly supported
cohomology.
The sheaf $E$ may be a locally constant sheaf of spaces in the
infinity $1$-categorical sense, in order for the theorem to make
sense, and to be true.
Such $E$ can be identified exactly with a locally constant
factorisation algebra $A$ taking values in $\Space^\op$.
Then the prealgebra $E^+$ in $\Space$ gets identified with the
precoalgebra $A^+$ in $\Space^\op$ we have defined in Section
\ref{sec:construction}.

Thus, Lurie's theorem is along the line of discussions we have made
after Definition \ref{def:koszul-dual} in Section
\ref{sec:construction}.

See \cite{higher-alg} for the relation of this to the classical
`Abelian' or `stable' Poincar\'e duality theorem.
In the classical context, the role of the Koszul duality is played by
the Verdier duality.

Another interesting point mentioned in \cite{higher-alg} is that at a
point of $M$, the stalk of $A^!$ is the $n$-fold based loop space of
the stalk of $E$, and the structure of a factorisation algebra of
$A^!$ is extending the structure of an $E_n$-algebra of the $n$-fold
loop space.
Namely, the Koszul duality construction in the current context is
globalising the looping functor in the context of the classical theory
of iterated loop spaces.
We shall next consider a globalisation of the \emph{delooping}
functor.

\bigskip

We go to the opposite context, and consider the case where the algebra
$A$ takes values in $\Space$, the category of spaces with the
Cartesian symmetric monoidal structure.

The following is a version of non-Abelian duality theorem in this
context.
It can be proved more or less similarly to Lurie's theorem.
However, this proposition can be also deduced from
Lurie's theorem, and vice versa.
\begin{proposition}
If every stalk of $A$ is group-like as an $E_1$-algebra, then $A^+$ is
a locally constant factorisation algebra in $\Space^\op$.
In particular, the
map $\int^c_MA\to\int_MA^!$ is an equivalence.
\end{proposition}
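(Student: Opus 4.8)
The plan is to establish the first assertion, that $A^+$ is a locally constant factorisation algebra in $\Space^\op$; the ``in particular'' then follows by evaluating the factorising descent at $M$, since $\int_MA^!$ is by definition the value at $M$ of the descent extension of $A^!=A^+\resto{\Disk(M)}$, and the asserted map is the canonical comparison to that extension. To prove the first assertion I would invoke Theorem \ref{thm:excision} in $\Space^\op$: because the Cartesian monoidal structure on $\Space$ --- equivalently the coCartesian structure on $\Space^\op$ --- has multiplication preserving sifted colimits and sifted limits, the excision characterisation is available there, so it suffices to show that $A^+$ satisfies excision. Thus, for a cut $U=U_0\union_NU_1$ presented by a constructible map $p\colon U\to I$, the task is to show that the canonical map
\[
A^+(U_0)\tensor_{A^+(N)}A^+(U_1)\longto A^+(U)
\]
is an equivalence, where the relative tensor product is taken in $\Space^\op$, that is, is a relative cobar construction (a totalisation) in $\Space$.

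The decisive structural input is the group-likeness hypothesis, entering through the recognition principle. Each stalk $G$ of $A$ is a group-like $E_1$-algebra, hence (since the monoid structure on $\pi_0$ only sees the underlying $E_1$-structure) a group-like $E_n$-algebra; by \cite{higher-alg} it is therefore an $n$-fold loop space, $G\equivwith\Loop^nY$ with $Y$ an $n$-connective based space recovered as the $n$-fold bar construction $\mathrm{Bar}^{(n)}G$. By Proposition \ref{prop:homology-over-interval} and its iterate, the stalk of $A^!$ is exactly $\int^c_{\R^n}A\equivwith\mathrm{Bar}^{(n)}G\equivwith Y$, so $A^!$ is a locally constant coalgebra whose stalks are $n$-connective. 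Under the identification of Section \ref{sec:poincare-duality} of locally constant factorisation algebras in $\Space^\op$ with locally constant sheaves of based spaces, $A^!$ therefore corresponds to a locally constant sheaf $E$ with $n$-connective stalks --- precisely the hypothesis of Lurie's Theorem \ref{thm:lurie-poincare}.

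To verify excision I would compute each compactly supported homology $A^+(V)=\int^c_VA\equivwith A(V)\tensor_{A(\boundaryc{V})}\unity$ as a two-sided bar construction in $\Space$, and substitute into it the excision equivalence $A(U_0)\tensor_{A(N)}A(U_1)\equivwith A(U)$ for $A$ itself, which holds because $A$ is a factorisation algebra. The displayed map then becomes the assertion that the geometric realisations (the sifted colimits computing the various $A^+(\,\cdot\,)$) may be commuted past the totalisation (the sifted limit defining the $\Space^\op$-cotensor over $A^+(N)$). For group-like $E_1$-algebras this base change is exactly the content of the group-completion theorem: the bar construction of a group-like monoid is a quasi-fibration, so the relevant squares are Cartesian and looping recovers it faithfully, $\Loop\,\mathrm{Bar}\,G\equivwith G$. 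Running this interchange dimension by dimension along the cut would yield the equivalence, hence excision, hence the first assertion.

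The hard part will be precisely this colimit--limit interchange, and it is where group-likeness is indispensable: for a non-group-like monoid $G$ one has that $\Loop\,\mathrm{Bar}\,G$ is not equivalent to $G$ and the bar construction fails base change, so $A^+$ would not satisfy excision. Finally, I would remark that the whole result is the nonabelian Poincar\'e duality in the context opposite to Lurie's, and is interchangeable with his theorem: under the involution $A\longadjointwith A^!$ of Koszul duality, the proposition for $A$ and Theorem \ref{thm:lurie-poincare} for the sheaf $E$ correspond to one another, which is the sense in which the statement may be ``deduced from Lurie's theorem, and vice versa.''
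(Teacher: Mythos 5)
Your proposal is correct and takes essentially the same approach the paper intends: the paper writes out no proof beyond remarking that the proposition ``can be proved more or less similarly to Lurie's theorem'' (or deduced from it), and the machinery it develops immediately afterwards for Theorem \ref{thm:gromov} --- reduction of excision for $A^+$ to a statement about one-sided bar constructions over a cut algebra, plus Lemma \ref{lemma:algebraic-unstable}, which is precisely your group-completion/fibration step asserting that $K\tensor_GL\to(K\tensor_G\unity)\times_{G^!}(\unity\tensor_GL)$ is an equivalence for group-like $G$ --- is exactly the colimit--limit interchange you outline. The one loose end in your sketch: the algebra fed into that step is not a stalk of $A$ but the cut algebra $G=A(\interior{N}\times\R^1)\tensor_{A(\boundary N\times\R^1)}\unity$ attached to a hypersurface $N$ (which may be closed, since $M$ may be closed), so you still need to pass from stalk-wise group-likeness to group-likeness of $G$; this is routine --- $\pi_0$ preserves the sifted colimits and finite products involved, so $\pi_0G$ is generated as a monoid by images of invertible stalk classes and is therefore a group --- but the paper is explicitly careful about exactly which algebra must be group-like (see the discussion surrounding Lemma \ref{lemma:algebraic-unstable}), and your sketch elides this transfer.
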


\smallskip

In the present context, the formal part of the proof of 
Gromov's h-principle applies, and we obtain the following.
In the opposite context, there does not seem to be a similar theorem
(at least in an interesting way).
See Remark \ref{rem:opposite-gromov}.

\begin{theorem}\label{thm:gromov}
Let $A$ be a locally constant factorisation algebra of spaces on a
manifold $M$.
Then the canonical map
$\int^c_MA\to\int_MA^!$($\equivwith\Gamma(M,A^!)$, derived sections)
of spaces is an equivalence if no connected component of $M$ is a
closed manifold (i.e., if $M$ is ``open'').
\end{theorem}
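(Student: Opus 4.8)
The plan is to compare the two contravariant symmetric monoidal functors $\int^c_?A$ and $\int_?A^!$ on $\Open(M)$ by reducing, through a handle decomposition of $M$, to local models on which they manifestly agree. First I would record that $\int_?A^!$ is the factorisation homology of the locally constant factorisation coalgebra $A^!$ computed in $\Space^\op$; as such it is a factorisation algebra there, hence satisfies excision by Theorem \ref{thm:excision} (applied in $\Space^\op$), and it is the left Kan extension of its restriction to disks and agrees with $\Gamma(M,A^!)$. On a disk $D$ the comparison map is an equivalence, since $A^!=A^+\resto{\Dis(M)}$ (Definition \ref{def:koszul-dual}) gives $\int_DA^!=A^!(D)=\int^c_DA$; and on disjoint unions it is an equivalence by the symmetric monoidality of Section \ref{sec:monoidality}, compatibly with the Cartesian structure of $\Space$. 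This reduces the theorem to showing that the comparison map is preserved under the gluings by which an open $M$ is assembled from disks.

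The geometric input specific to open manifolds is the classical fact that, as the interior of a compact manifold with boundary none of whose components is closed, $M$ admits a handle decomposition with all handles of index $\le n-1$ (equivalently a proper Morse function without local maxima). I would therefore write $M$ as an increasing union $M_0\sub M_1\sub\cdots$ of open submanifolds, with $M_{k+1}$ obtained from $M_k$ by attaching a single handle of index $i\le n-1$. Both sides send this exhaustion to the colimit: the target because factorisation homology commutes with filtered colimits of opens, and the source because $\int^c_?A$ preserves filtered colimits (as used for Proposition \ref{prop:homology-over-interval}, each compactly supported class being already supported at a finite stage). It then suffices to treat a single handle attachment, the base case being a single disk, already handled above.

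For the inductive step, attach an index-$i$ handle with $i\le n-1$, and cover $M_{k+1}=V_0\union V_1$ with $V_0\equivwith M_k$, with $V_1\cong\R^n$ the open handle (a disk), and overlap $V_0\intersect V_1\cong\sphere^{i-1}\cross\R^{n-i+1}$. On the target, excision for $A^!$ expresses $\int_{M_{k+1}}A^!$ as the homotopy fibre product of $\int_{V_0}A^!$ and $\int_{V_1}A^!$ over $\int_{V_0\intersect V_1}A^!$. The crux is to produce the matching gluing for the source and to check that the comparison map intertwines the two. Here the inequality $i\le n-1$ is essential: the cocore direction $\R^{n-i}$ of the handle is non-empty precisely when $i<n$, giving a free Euclidean direction transverse to the core, and it is along this direction that the compact support can be pushed off to infinity, so that the value of $\int^c$ on the glued manifold is assembled from its values on the pieces in the same way as $A^!$. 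This ``scanning'' along a free cocore — unobstructed because no top-dimensional handle closes off a component — is the formal part of Gromov's h-principle in the present setting, and is exactly what is unavailable when a component of $M$ is closed and must be capped by an index-$n$ handle.

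Granting the source gluing and its compatibility with the comparison map, it remains to observe that the restrictions of the comparison map to the three pieces are equivalences: on $V_0\equivwith M_k$ by the inductive hypothesis, on the disk $V_1$ by the base case, and on the overlap $\sphere^{i-1}\cross\R^{n-i+1}$ because the latter is again open with a strictly lower-dimensional spine, to which a secondary induction on dimension applies. Since both gluings are built functorially from these three values and the map respects both, it is an equivalence on $M_{k+1}$, completing the induction. The main obstacle I expect is exactly this source gluing: because $\int^c_?A$ does \emph{not} satisfy excision for general $M$ (that failure being the very content of the Poincar\'e duality theorems), Theorem \ref{thm:excision} cannot be invoked on the source, and the real work is to show, from the explicit simplicial model of $\int^c$ in Section \ref{sec:functoriality} together with the free cocore direction of an index-$(<n)$ handle, that the comparison map carries the cosheaf-type (pushout) gluing of compactly supported homology to the excision (homotopy fibre product) gluing of $A^!$.
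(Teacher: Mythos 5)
Your overall architecture --- a handle decomposition with all handles of index $\le n-1$, excision for $A^!$, agreement on disks, induction over handle attachments --- matches the paper's. But the proposal has a genuine gap, and you have located it yourself: everything hinges on the clause ``granting the source gluing and its compatibility with the comparison map,'' which is precisely the statement that the paper's proof actually establishes; ``scanning along a free cocore'' is the right geometric intuition, but it is not an argument. Concretely, what is missing is a proof that $A^+$ itself satisfies excision for an attachment of a handle of index $i\le n-1$, i.e., that the restriction map
\[
A^+(W)\longto A^+(\interior{U})\cotensor_{A^+(\interior{U}\intersect\interior{V})}A^+(\interior{V})
\]
is an equivalence. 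The paper proves this in two steps. First, since $A$ is a factorisation algebra, its excision can be applied \emph{compatibly} to $W$ and to $\boundaryc{W}$; a Fubini argument on the order of realisation of the resulting bisimplicial bar construction identifies $A^+(W)=A(W)\tensor_{A(\boundaryc{W})}\unity$ with $K\tensor_GL$, where $G=A(\interior{U}\intersect\interior{V})\tensor_{A(\boundary U\intersect\boundary V)}\unity$, $K=A(\interior{U})\tensor_{A(\boundary U)}\unity$, and $L$ is the analogue for $V$. Second, in these terms the desired excision becomes a purely algebraic assertion: the canonical map
\[
K\tensor_GL\longto(K\tensor_G\unity)\times_{G^!}(\unity\tensor_GL)
\]
is an equivalence. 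This is Lemma \ref{lemma:algebraic-unstable}, valid whenever the $E_1$-algebra $G$ of spaces is group-like, and proved by viewing both sides as spaces over $G^!=BG$ and observing that on fibres over the essentially unique point of $BG$ the map is the identity of $K\times L$. The index bound $i\le n-1$ enters exactly here and nowhere else: it makes the factor $\close{\R}^{n-i}$ positive-dimensional, hence the overlap algebra $G$ connected, and in particular group-like. Without this algebraic reduction your induction never closes, since, as you note, Theorem \ref{thm:excision} is unavailable for the source.

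Two smaller points. The exhaustion $M_0\sub M_1\sub\cdots$ and the claim that $\int^c_?A$ preserves the resulting filtered colimit are unnecessary, and the latter would itself require justification given that $\int^c$ depends on the specified compactification: since $\close{M}$ is compact by the paper's conventions, the handle decomposition is finite, and an induction on the number of handles suffices. Also, the paper does not run a secondary induction on the dimension of the overlap; once both functors satisfy excision for index-$(<n)$ attachments, it suffices (as in the proof of Theorem \ref{thm:excision}) to check agreement on disks and on manifolds $\boundary D^i\times D^{n-i+1}$, and the latter are themselves obtained from a disk by attaching a single handle of index $i-1$, so agreement there follows from the already-established excision rather than from a dimension induction.
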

\begin{proof}
Note that the association $U\mapsto\Gamma(U,A^!)$ is the universal
locally constant sheaf associated to the locally constant presheaf
$A^+$, and the map $\int^c_MA\to\int_MA^!$ is the map on the global
sections of the universal map.

Take a handle body decomposition of $\close{M}$ involving no handle
of index $n:=\dim M$, and for this decomposition,
$A^!:U\mapsto\Gamma(U,A^!)$ satisfies excision for every handle
attachment.

We first prove that excision for attachment of a handle of index
smaller than $n$ is satisfied by $A^+$ as well.
More generally, suppose $W$ is an open submanifold of $M$ which as a
manifold by itself, is given as the interior of a
compact manifold $\close{W}$, and let $\{U, V\}$ be a cover of
$\close{W}$ by two open submanifolds (possibly with boundary) which
has a diffeomorphism $U\intersect V\equivto N\times\R^1$ for a
($n-1$)-dimensional manifold $N$, compact with boundary.
In the case of handle body attachment (so $V$, say is the attached
handle, and $\close{W}$ is the result of attachment), $N$ is of the
form $S^{i-1}\times\close\R^{n-i}$, where $i$ is the index of the
handle.

Let us denote $\interior{U}=U\intersect W$,
$\boundary
U=U\intersect\boundaryc{W}(\subsetneq\boundaryc{U})$,
and similarly for $V$ and $N$.
Then
$A^+(\interior{U}\intersect\interior{V})=A^+(\interior{N}\times\R^1)$
is an $E_1$-coalgebra, and let us assume that our choice of
orientation of $\R^1$ makes $A^+(\interior{U})$ a right, and
$A^+(\interior{V})$ a left module respectively, over this
$E_1$-coalgebra.
Then we want to show that the restriction map
\[
A^+(W)\longto
A^+(\interior{U})\cotensor_{A^+(\interior{U}\intersect\interior{V})}A^+(\interior{V}),
\]
where the target denotes the cotensor product, is an equivalence.

Recall that
\[
A^+(W)=A(W)\tensor_{A(\boundaryc{W})}\unity.
\]
Our idea is to apply the excision property of $A$ to $A(W)$
and $A(\boundaryc{W})$ in a compatible way.
That is, using the decomposition of $\close{W}$ into $U$ and $V$, and
its restriction to the boundary (or rather, a collar of
$\boundaryc{W}$), we obtain identifications
\[
A(W)=A(\interior{U})\tensor_{A(\interior{U}\intersect\interior{V})}A(\interior{V})
\]
and
\[
A(\boundaryc{W})=A(\boundary
U)\tensor_{A(\boundary U\intersect\boundary V)}A(\boundary V)
\]
which are compatible with the actions at boundary.
Therefore, by denoting by $G$ the $E_1$-algebra
$A(\interior{U}\intersect\interior{V})\tensor_{A(\boundary
  U\intersect\boundary V)}\unity$, we obtain that
\[
A^+(W)=K\tensor_GL,
\]
where $K$ is the right $G$-module
$A(\interior{U})\tensor_{A(\boundary U)}\unity$,
and $L$ the similar left $G$-module corresponding to $V$.
This follows since the difference between the objects in question is
difference in the order in which to realise a bisimplicial object.

However, in terms of these algebra and modules, we see that
$A^+(\interior{U})\tensor
A^+(\interior{V})=K\tensor_G\unity\tensor_GL$, and
$A^+(\interior{U}\intersect\interior{V})=\unity\tensor_G\unity$, and
by inspecting the actions, we find  that the assertion of excision
is that the canonical map
\[
K\tensor_GG\tensor_GL\longto(K\tensor_G\unity)\cotensor_{\unity\tensor_G\unity}(\unity\tensor_GL),
\]
defined by algebra is an equivalence.
We state this as a lemma below,
and the proof of the lemma will complete the proof of the excision
property of $A^+$ for handle body attachment.
The proof of lemma will use the assumption $i\le n-1$.

\medskip

The proof can be now completed similarly to the proof of Theorem
\ref{thm:excision}, by induction on the number of handles in the
decomposition of $\close{M}$ we have been considering.
Namely, since both $A^+$ and $A^!$ satisfy excision for handle body
attachment of index smaller than $\dim M$, by induction, it suffices
to prove that the value of these functors agree on open submanifolds
of $M$ diffeomorphic to either a disk or $\boundary D^i\times
D^{n-i+1}$, where $i\le n-1$.

The case of a disk is by the definition of $A^!$.
The latter case follows from the excision for the handle body
attachment of index smaller than $n$, since $\boundary D^i\times
\close{D}^{n-i+1}$ can be obtained by attaching a handle of index
$i-1$ to a disk.
\end{proof}

Let us state and prove the lemma which was promised in one of the
steps in the proof.
To recall the notation,
$G$ is an $E_1$-algebra of spaces, and $K$ is a left,
and $L$ is a right, $G$-module respectively, both of spaces.
In the previous proof, we were in the situation where
$G=A(\interior{N}\times\R^1)\tensor_{A(\boundary N\times\R^1)}\unity$,
where $N=S^i\times\close\R^{n-i}$, and $n-i\ge 1$, and $A$ was a
locally constant factorisation algebra on $\interior{N}\times\R^1$.
In particular, the underlying space of $G$ was connected.

For the following lemma, we only need to assume that $G$ is
group-like, namely, the monoid $\pi_0(G)$ is in fact a group.
Since this assumption is satisfied for a connected $G$, the proof of
the following lemma closes the unfinished step of the previous proof.

\begin{lemma}\label{lemma:algebraic-unstable}
The canonical map
\[
K\tensor_GL\longto(K\tensor_G\unity)\times_{G^!}(\unity\tensor_GL)
\]
is an equivalence for every $K$ and $L$ if it is so for $K=G$
and $L=G$, namely if $G$ is group-like.
\end{lemma}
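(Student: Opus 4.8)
The plan is to recognise all four objects as two-sided bar constructions and then reduce the general statement to the universal case $K=L=G$ by a colimit argument. Since the monoidal structure here is Cartesian, $\unity$ is the point, the relative tensor products are the geometric realisations $K\tensor_GL\equivwith\colim_{\Simp^\op}B_\dot(K,G,L)$ of the usual simplicial bar objects, and in particular $K\tensor_G\unity$ and $\unity\tensor_GL$ are the homotopy quotients of $K$ and $L$, while $G^!=\unity\tensor_G\unity$ is the classifying space $BG$. The two projections collapsing $L$ and $K$ to the point exhibit the canonical map as
\[
K\tensor_GL\longto(K\tensor_G\unity)\times_{G^!}(\unity\tensor_GL),
\]
covering the structure map to $G^!$, and this map is natural in the right $G$-module $K$ and the left $G$-module $L$.

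First I would check that both the source and the target, regarded as functors of $K$ (resp.\ of $L$) alone, preserve geometric realisations. For the source this is immediate: Cartesian product in $\Space$ preserves colimits in each variable, and a bar construction is a realisation, so $K\mapsto K\tensor_GL$ preserves colimits. For the target, $K\mapsto K\tensor_G\unity$ preserves colimits for the same reason; the one non-formal input is that forming the fibre product $(\;)\times_{G^!}(\unity\tensor_GL)$ over the fixed base $G^!$ also preserves colimits, which holds because $\Space$ is an $\infty$-topos and colimits in an $\infty$-topos are universal. Granting this, both functors send the bar resolution $K\equivwith\colim_{\Simp^\op}B_\dot(K,G,G)$ --- a realisation of \emph{free} right $G$-modules --- to realisations, so the comparison map is an equivalence for all $K$ as soon as it is for every free module $S\times G$; and since both sides also preserve colimits in the space $S$, this reduces further to $S$ a point, i.e.\ to $K=G$. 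Repeating the argument in the variable $L$ reduces the whole lemma to the single case $K=L=G$.

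It then remains to treat $K=L=G$. Here $G\tensor_GG\equivwith G$, while $G\tensor_G\unity$ and $\unity\tensor_GG$ are each contractible (the homotopy quotient of a free action), so the target becomes the homotopy fibre $\unity\times_{G^!}\unity\equivwith\Loop G^!=\Loop BG$, and the comparison map is precisely the canonical map $G\to\Loop BG$. This map is an equivalence exactly when $G$ is group-like, by the classical recognition principle for $\Loop BG$ (equivalently, group completion), which is the hypothesis; this also yields the converse direction, since specialising any equivalence to $K=L=G$ forces $G\equivwith\Loop BG$. The only step that is not purely formal is the preservation of colimits by the fibre product over $G^!$, and this is exactly where the $\infty$-topos structure of $\Space$ (universality of colimits) is used; everything else is a standard manipulation of bar constructions.
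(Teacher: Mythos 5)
Your proof is correct, but it follows a genuinely different route from the paper's one-line argument. You argue \emph{cellwise in the modules}: both sides of the comparison map are colimit-preserving functors of $K$ (and of $L$) --- for the target this rests on universality of colimits in $\Space$, so that the fibre product over the fixed base $G^!$ preserves colimits --- and the two-sided bar resolution $K\equivwith\colim_{\Simp^\op}B_\dot(K,G,G)$ then reduces the general statement to free modules $S\times G$, then to $S$ a point, i.e.\ to the universal case $K=L=G$, which is invoked as a black box since it is literally the hypothesis. The paper argues dually, \emph{fibrewise over the base}: it regards both sides as spaces over $G^!=BG$, and since spaces over $BG$ are the same as local systems on $BG$ (descent; compare Remark \ref{rem:homotopy-theory}) and $BG$ is connected, it suffices to check that the induced map on fibres over the essentially unique point of $BG$ is an equivalence; group-likeness makes the simplicial maps $B_\dot(K,G,L)\to B_\dot(\unity,G,\unity)$ Cartesian, so both fibres are identified with $K\times L$ and the induced map is the identity. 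These are the two complementary uses of the $\infty$-topos structure of $\Space$: the paper uses Van Kampen (descent) colimits to detect equivalences on fibres, you use universality of colimits to propagate equivalences along resolutions. What your route buys: it uses the hypothesis exactly in the form stated (the case $K=L=G$), with no further appeal to group completion or to a computation of fibres of realisations, and it transports verbatim to any symmetric monoidal category in which the monoidal product and base change preserve geometric realisations. What the paper's route buys: brevity and geometric transparency --- both sides are exhibited at once as total spaces over $BG$ with the same fibre $K\times L$ --- at the cost of having to know that group-likeness lets one compute those fibres levelwise.
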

\begin{proof}
Consider the maps as a map over $G^!$.
Then the induced map on the fibres over the unique (up to homotopy)
point of $G^!=BG$ can be identified with the identity of $K\times L$.
\end{proof}

\begin{remark}\label{rem:opposite-gromov}
In the previous context where the target category $\cat{A}$
is $\Space^\op$ with the coCartesian symmetric monoidal structure,
there does not seem to be a result corresponding to
Theorem \ref{thm:gromov}, in an interesting way.

In fact, if $A$ is an arbitrary algebra in $\cat{A}$, namely, a
locally constant sheaf of spaces, then the map $\lift{A}\to A$ from
the stalk-wise $n$-connective cover (where $n$ is the dimension of our
manifold) induces an equivalence on the Koszul dual.
Therefore, for any open $U$, we have
$\int_UA^!=\Gamma_c(U,\lift{A})$.

However, it happens only rarely that the map $\lift{A}\to A$ induces
an equivalence on the space of compactly supported sections.
\end{remark}

\subsection{Poincar\'e duality for complete factorisation algebras}
\label{sec:poincare-duality-complete}
\setcounter{subsubsection}{-1}

In this section, let $\cat{A}$ be a symmetric monoidal complete
soundly filtered stable category with uniformly bounded sequential
limits, as defined in \cite[Sections 2, 3, Definition 4.3]{local}.

Let us first recall the following.

\begin{definition}[{\cite[Definition 4.16]{local}}]
\label{def:e-n-copositive}
An augmented $E_n$-algebra $A$ in $\cat{A}$ is said to be
\kore{positive} if its augmentation ideal belongs to $\cat{A}_{\ge
  1}$.

An augmented $E_n$-coalgebra $C$ in $\cat{A}$ is said to be
\kore{copositive} if there is a uniform bound $\omega$ for loops
\cite[Definition 2.42]{local} in $\cat{A}$ such that the augmentation
ideal $J$ of $C$ belongs to $\cat{A}_{\ge 1-n\omega}$.
\end{definition}

Let $M$ be a manifold (without boundary).
We shall prove a version of the non-Abelian Poincar\'e duality theorem
in $\cat{A}$, for factorisation algebras which is \emph{positive} in
the following sense.

\begin{definition}
An augmented factorisation algebra $A$ on
$M$ is said to be \kore{positive} if the stalk $A_x$ at every
point $x\in M$ is positive as an $E_n$-algebra, where $n=\dim M$.
\end{definition}

\begin{theorem}\label{thm:poincare-duality-complete}
Let $A$ be a positive augmented locally constant factorisation
algebra on $M$, valued in $\cat{A}$ as above.
Then, $A^+$, defined by compactly supported factorisation homology
(see Section \ref{sec:construction}), satisfies excision.
\end{theorem}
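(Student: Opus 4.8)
The plan is to show that excision for $A^+$ is a purely local assertion along a single cutting hypersurface, to reduce that assertion to an algebraic statement about modules and comodules over an $E_1$-algebra and its Koszul dual exactly as in the proof of Theorem \ref{thm:gromov}, and then to deduce that statement from the Koszul duality equivalence of \cite{local} recorded in Theorem \ref{thm:koszul-duality-e-n}. By the definition of excision it suffices to treat one constructible map $p\colon U\to I$ at a time, and by the dual of Proposition \ref{prop:homology-over-interval} the constructible precoalgebra $p_*A^+$ on $I$ is a factorisation coalgebra precisely when a single canonical cotensor-product map is an equivalence. Thus the whole theorem reduces to one such map, for each cut, being an equivalence. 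Note that no induction over a handle-body decomposition is needed here: unlike in Theorem \ref{thm:gromov}, positivity will be available for \emph{every} cut, so full excision (rather than excision only for low-index handles) follows.

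First I would carry out the local reduction. Fixing a cut, write the relevant open submanifold as $\close{W}=U\cup V$ with $U\intersect V\diffeowith N\times\R^1$, as in the proof of Theorem \ref{thm:gromov}. Since $A$ is itself a factorisation algebra it satisfies excision (Theorem \ref{thm:excision}); applying this compatibly to $A(W)$ and to $A(\boundaryc{W})$, and then forming the relative tensor product with $\unity$ that defines compactly supported homology, yields an identification
\[
A^+(W)=K\tensor_GL,
\]
where $G:=A(\interior{U}\intersect\interior{V})\tensor_{A(\boundary U\intersect\boundary V)}\unity$, the object $K:=A(\interior{U})\tensor_{A(\boundary U)}\unity$ is a right $G$-module, and $L$ is the corresponding left $G$-module. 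This computation is identical to the one in the proof of Theorem \ref{thm:gromov} and uses nothing about $\cat{A}$ beyond the existence of the derived relative tensor products. Under this identification the excision assertion becomes the statement that the canonical map
\[
K\tensor_GL\longto(K\tensor_G\unity)\cotensor_{G^!}(\unity\tensor_GL)
\]
is an equivalence, where $G^!:=\unity\tensor_G\unity$ is the bar construction, that is, the $E_1$-Koszul dual coalgebra of $G$.

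The place where the hypotheses of the present theorem enter, replacing the group-likeness used in Lemma \ref{lemma:algebraic-unstable}, is positivity. I would check that the positivity of $A$ (every stalk a positive $E_n$-algebra) forces $G$ to be a positive $E_1$-algebra, by verifying that positivity is preserved under the push-forward and relative-tensor operations producing $G$; this is the algebraic feature that makes the argument work uniformly, independent of the index of the handle. Theorem \ref{thm:koszul-duality-e-n} then applies: $G^!$ is a copositive coalgebra (Definition \ref{def:e-n-copositive}) whose cobar construction recovers $G$, and $K\tensor_G\unity$, $\unity\tensor_GL$ become $G^!$-comodules. The displayed map is then an equivalence by a module-theoretic refinement of Theorem \ref{thm:koszul-duality-e-n}, the complete-filtered analogue of Lemma \ref{lemma:algebraic-unstable}, asserting that $\unity\tensor_G(\,\cdot\,)$ intertwines the relative tensor product of $G$-modules with the cotensor product of $G^!$-comodules.

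I expect this last equivalence to be the main obstacle. Its content is the convergence of the bar--cobar comparison inside $\cat{A}$: positivity of $G$ equips the two-sided bar construction computing $K\tensor_GL$ with a positive filtration, copositivity of $G^!$ equips the cotensor side with a copositive cofiltration controlled by the uniform bound $\omega$ for loops of Definition \ref{def:e-n-copositive}, and the hypothesis that $\cat{A}$ is complete with uniformly bounded sequential limits (\cite[Definition 4.3]{local}) is exactly what forces these filtered and cofiltered objects to agree in the limit. This is the step where completeness substitutes for the sifted-limit hypothesis that fails in general, and it is where the estimates of \cite{local} must be invoked with care. Once it is in place, the equivalence holds for every cut, and the excision property of $A^+$ follows.
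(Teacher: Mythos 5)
Your proposal is correct and follows essentially the same route as the paper's proof: the same cut of $\close{W}$ into $U$ and $V$ along $N\times\R^1$, the same identification $A^+(W)=K\tensor_GL$ via compatible excision for $A(W)$ and $A(\boundaryc{W})$, and the same reduction to the assertion that $K\tensor_GL\to(K\tensor_G\unity)\cotensor_{G^!}(\unity\tensor_GL)$ is an equivalence, with positivity of $G$ replacing the group-likeness of Lemma \ref{lemma:algebraic-unstable} and thereby making every cut (not just low-index handles) admissible. The intertwining statement you flag as the main obstacle does not need to be re-proved: it is precisely what the paper imports from \cite{local} as Proposition \ref{prop:commuting-tensor-and-cotensor} and Theorem \ref{thm:josh-complete-implies-koszul-complete} (Proposition 4.7 and Theorem 4.10 there), which combine to give exactly the displayed equivalence once $G$ is positive and the modules are bounded below.
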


For the proof we use the following facts established in \cite{local}.

For an associative coalgebra $C$, let us denote by $-\cotensor_C-$ the
(co-)tensor product over $C$.

\begin{proposition}[{\cite[Proposition 4.7]{local}}]
\label{prop:commuting-tensor-and-cotensor}
Let $A$ be a positive augmented associative algebra, and $C$ a
copositive augmented associative coalgebra, both in $\cat{A}$.
Assume $A$ is positive, and $C$ is copositive (Definition
\ref{def:e-n-copositive}).
Let $K$ be a right $A$-module, $L$ an $A$--$C$-bimodule, and let $X$
be a left $C$-module, all bounded below.

Then the canonical map
\[
K\tensor_A(L\cotensor_CX)\longto(K\tensor_AL)\cotensor_CX
\]
is an equivalence.
\end{proposition}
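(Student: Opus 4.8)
The plan is to exhibit the comparison as the canonical interchange of a geometric realisation (the relative tensor product) with a totalisation (the relative cotensor product), and to verify that this interchange is an equivalence after passing to the associated graded, where positivity of $A$ and copositivity of $C$ force both the realisation and the totalisation to be degreewise finite. First I would model the two functors by the standard (co)bar constructions, exactly as the tensor product over an interval was computed in Proposition \ref{prop:homology-over-interval}. Writing $B_p(K,A,M)=K\tensor A^{\tensor p}\tensor M$ for the bar construction, one has $K\tensor_AM=\colim_{\Simp^\op}B_\dot(K,A,M)$; dually, writing $\mathrm{Cob}^q(M,C,X)=M\tensor C^{\tensor q}\tensor X$ for the cobar construction, one has $M\cotensor_CX=\lim_{\Simp}\mathrm{Cob}^\dot(M,C,X)$. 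Since the $A$-action and the $C$-coaction on $L$ sit on opposite sides, these assemble into a single object
\[
(p,q)\;\longmapsto\;K\tensor A^{\tensor p}\tensor L\tensor C^{\tensor q}\tensor X
\]
which is simplicial in $p$ and cosimplicial in $q$. The source $K\tensor_A(L\cotensor_CX)$ is the $\colim_{\Simp^\op}\lim_{\Simp}$ of this bi-object, the target $(K\tensor_AL)\cotensor_CX$ is its $\lim_{\Simp}\colim_{\Simp^\op}$, and the map of the proposition is precisely the canonical comparison from the former to the latter.

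Next I would reduce to checking this comparison on the associated graded. Under the hypotheses on $\cat{A}$ recalled from \cite[Sections 2, 3]{local}, and because $K$, $L$, $X$ are bounded below, both the bar realisation and the cobar totalisation land in complete objects, the latter being controlled by the uniformly bounded sequential limits assumption; hence it suffices to prove that the comparison is an equivalence after applying $\mathrm{gr}$. I would then fix a weight $w$ and treat the two directions separately in that weight. Replacing $B_\dot$ and $\mathrm{Cob}^\dot$ by their reduced (normalised) versions, built from the augmentation ideal $\overline{A}\in\cat{A}_{\ge 1}$ and the coaugmentation coideal $\overline{C}$, positivity of $A$ forces the weight-$w$ part of $B_p$ to vanish once $p$ exceeds a bound $N(w)$, since each tensor factor $\overline{A}$ raises the weight by at least one; copositivity of $C$ (Definition \ref{def:e-n-copositive}), through the uniform loop bound $\omega$, likewise forces the weight-$w$ part of $\mathrm{Cob}^q$ to vanish once $q$ exceeds a bound $N'(w)$.

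Consequently, in each fixed weight the realisation becomes a finite colimit (its $N(w)$-skeleton) and the totalisation a finite limit (its $N'(w)$-coskeleton) in the stable category $\mathrm{gr}\,\cat{A}$. Since finite colimits commute with finite limits in any stable category, the interchange map is an equivalence in every weight, hence an equivalence on $\mathrm{gr}$, and therefore an equivalence by completeness. The main obstacle is exactly this degreewise finiteness bookkeeping: one must check that passing to $\mathrm{gr}$ commutes with both the realisation and the totalisation — the delicate half being the totalisation, which is where the uniformly bounded sequential limits hypothesis is essential — and one must track the copositivity bound $\omega$ quantitatively to produce the vanishing range $N'(w)$ in the cobar direction. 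Once these bounds are established, the commutation of finite limits and finite colimits in the stable associated graded closes the argument.
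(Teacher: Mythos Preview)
This paper does not actually prove Proposition~\ref{prop:commuting-tensor-and-cotensor}; it is imported verbatim from \cite[Proposition~4.7]{local} and used as a black box in the proof of Theorem~\ref{thm:poincare-duality-complete}. There is therefore no proof in the present text to compare your attempt against. (The only hint the paper gives is the remark before the dual proposition in Section~\ref{sec:variant} that its proof is ``similar'', which confirms the result is established elsewhere.)

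That said, your outline is the expected one and is almost certainly what \cite{local} carries out: realise both sides as the two orders of $\colim_{\Simp^\op}$ and $\lim_{\Simp}$ applied to the bicomplex $K\tensor\close{A}^{\tensor p}\tensor L\tensor\close{C}^{\tensor q}\tensor X$, use completeness to reduce to the associated graded, and then invoke positivity of $A$ and copositivity of $C$ to see that in each fixed weight only finitely many $(p,q)$ contribute, so that one is interchanging a finite colimit with a finite limit in a stable category. The two points you flag as delicate are exactly the ones that need care: that the cofibre $|B_\dot|/|\sk_N B_\dot|$ lies in $\cat{A}_{\ge w+1}$ (so $\mathrm{gr}_w$ sees only a skeleton) requires the filtration to be closed under the relevant colimits, and the analogous statement on the cobar side is where the \emph{uniformly bounded sequential limits} hypothesis does its work. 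With those verifications in hand your argument closes; without access to \cite{local} I cannot confirm the precise bookkeeping there matches yours, but the architecture is right.
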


\begin{theorem}[{\cite[Theorem 4.10]{local}}]
\label{thm:josh-complete-implies-koszul-complete}
Let $A$ be a positive augmented associative algebra in $\cat{A}$, and
$K$ be a right $A$-module which is bounded below.
Then the canonical map
$K\to(K\tensor_A\unity)\cotensor_{\unity\tensor_A\unity}\unity$ is an
equivalence (of $A$-modules).
\end{theorem}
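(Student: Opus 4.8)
The plan is to recognise the map as the unit of the bar–cobar (Koszul duality) adjunction, to reduce to the case of the free module $K=A$ using Proposition \ref{prop:commuting-tensor-and-cotensor}, and to settle that case by a filtered convergence argument powered by positivity.

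Write $\bar A$ for the augmentation ideal of $A$ and set $A^!:=\unity\tensor_A\unity$, the Koszul dual coalgebra. Then $K\tensor_A\unity=B_\bullet(K,A,\unity)$ is the reduced bar construction, naturally a right $A^!$-comodule, and $(K\tensor_A\unity)\cotensor_{A^!}\unity$ is the attached cobar construction, so that the map under study is the canonical bar–cobar unit. Since $A$ is positive, its dual $A^!$ is copositive (Definition \ref{def:e-n-copositive}; concretely $\bar{A^!}\in\cat{A}_{\ge 2}$, by the bar-length filtration used below), so Proposition \ref{prop:commuting-tensor-and-cotensor}, applied with the $A$--$A^!$-bimodule $\unity$ and the left $A^!$-comodule $\unity$, yields an equivalence
\[
K\tensor_A(\unity\cotensor_{A^!}\unity)\equivto(K\tensor_A\unity)\cotensor_{A^!}\unity .
\]
By naturality of the unit, this carries the map in question to $K\tensor_A(\blank)$ applied to the bimodule map $A\to\unity\cotensor_{A^!}\unity$, namely to the case $K=A$. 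As $K\tensor_A(\blank)$ preserves equivalences, it thus suffices to prove that $A\to\unity\cotensor_{A^!}\unity$ is an equivalence of $A$-bimodules.

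This last statement is the classical bar–cobar, or double-dual, equivalence $A\equivto\unity\cotensor_{A^!}\unity$. Its algebraic engine is the standard contracting homotopy on the two-sided bar–cobar complex, assembled from the augmentation and the multiplication of $A$. To make this homotopy compute the cobar \emph{totalisation}, I would use positivity: from $\bar A\in\cat{A}_{\ge 1}$ the bar-length filtration of $A^!$ has layers $\bar A^{\tensor m}[m]\in\cat{A}_{\ge 2m}$, so that $A^!$ is a complete filtered coalgebra and, in each internal degree, only finitely many bar lengths contribute. Granting that the totalisation agrees with this naive total object, the contracting homotopy gives the asserted equivalence, and then the first paragraph delivers the theorem for every bounded-below $K$.

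The step I expect to be the main obstacle is exactly this convergence: the cobar construction is a totalisation, hence a sequential limit, and one must ensure that it converges to the expected value compatibly with the bar-length filtration, with no $\lim^1$-type defect, so that the formal contracting homotopy survives the limit. This is where the hypotheses that $\cat{A}$ be complete and soundly filtered with uniformly bounded sequential limits are indispensable, and where boundedness below of $K$ is used, guaranteeing the degreewise finiteness that lets the totalisation be computed naively. By contrast, the contracting homotopy on the two-sided bar–cobar complex is entirely classical and formal, and contributes none of the analytic difficulty.
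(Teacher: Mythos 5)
A preliminary remark: the paper does not prove this statement at all. It is imported verbatim from \cite[Theorem 4.10]{local} as one of the ``facts established in \cite{local}'' that the Poincar\'e duality theorem \ref{thm:poincare-duality-complete} is then deduced from, so there is no in-paper proof to compare against and your proposal has to stand on its own. On its own terms, your first paragraph is a sound reduction: granting that $A^!:=\unity\tensor_A\unity$ is copositive, Proposition \ref{prop:commuting-tensor-and-cotensor} applied with $L=X=\unity$ gives the exchange equivalence, and the naturality square reduces the theorem to the case $K=A$, i.e.\ to the claim that $A\to\unity\cotensor_{A^!}\unity$ is an equivalence. Two caveats, though: copositivity of $A^!$ is itself a lemma requiring proof, and your parenthetical ``$\bar{A^!}\in\cat{A}_{\ge 2}$'' cannot be right as stated --- the bar filtration has layers $\bar{A}^{\tensor m}[m]$, and suspensions shift filtration degree, which is precisely why Definition \ref{def:e-n-copositive} involves the loop bound $\omega$ rather than a clean lower bound.

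The genuine gap is the case $K=A$, which you settle only modulo the phrase ``granting that the totalisation agrees with this naive total object''. That grant is the entire content of the theorem, not a technicality to be absorbed into a classical argument. What the contracting homotopy proves formally is the \emph{counit} statement (cobar of bar maps equivalently to $A$); the map at issue here is the \emph{unit} $A\to\unity\cotensor_{A^!}\unity$, whose target is a totalisation, hence a sequential limit, and without completeness the statement is simply false: for $A=k[x]$ with $x$ given weight $1$ (a positive but non-complete filtered algebra), $A^!$ is an exterior coalgebra and $\unity\cotensor_{A^!}\unity\simeq k[[x]]$, so the unit is the non-invertible completion map $k[x]\to k[[x]]$. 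The convergence step therefore cannot be outsourced to a chain-level homotopy; it requires the actual filtered argument (positivity forces the fibre of $A\to\mathrm{Tot}^m$ into ever higher filtration, and completeness together with uniformly bounded sequential limits kills the limit of these fibres), which is exactly the part of \cite{local} your proposal would need to reproduce. Note also that in your architecture the bounded-below hypothesis on $K$ is consumed entirely by Proposition \ref{prop:commuting-tensor-and-cotensor}, while the $K=A$ case uses instead that positivity makes $A$ itself bounded below; so your closing claim that boundedness of $K$ is what lets the totalisation be computed naively misplaces where that hypothesis acts. As written, you have isolated the hard point correctly but not proved it, and since that point is (at the module level) the unit half of the $E_1$-Koszul duality theorem of \cite{local}, the proposal in effect reduces the cited theorem to an equally deep statement taken on faith.
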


\begin{proof}[Proof of Theorem \ref{thm:poincare-duality-complete}]
As in the proof of Theorem \ref{thm:gromov}, suppose that a situation
for excision is given as follows.

Namely, suppose $W$ is an open submanifold of $M$ which as a
manifold by itself, is given as the interior of a
compact manifold $\close{W}$, and let $\{U, V\}$ be a cover of
$\close{W}$ by two open submanifolds (possibly with boundary) which
has a diffeomorphism $U\intersect V\equivto N\times\R^1$ for an
($n-1$)-dimensional manifold $N$, compact with boundary.

Let us denote $\interior{U}=U\intersect W$,
$\boundary
U=U\intersect\boundaryc{W}(\subsetneq\boundaryc{U})$,
and similarly for $V$ and $N$.
Then
$A^+(\interior{U}\intersect\interior{V})=A^+(\interior{N}\times\R^1)$
is an $E_1$-coalgebra, and let us assume that our choice of
orientation of $\R^1$ makes $A^+(\interior{U})$ right, and
$A^+(\interior{V})$ left modules over this $E_1$-coalgebra.

Then, as before, we have an $E_1$-algebra
$B:=A(\interior{U}\intersect\interior{V})\tensor_{A(\boundary
  U\intersect\boundary V)}\unity$ (denoted by $G$ before),
a right $B$-module
$K:=A(\interior{U})\tensor_{A(\boundary U)}\unity$,
and $L$ the similar left $B$-module corresponding to $V$,
and the excision in question can be stated as that the canonical map
\[
K\tensor_BB\tensor_BL\longto(K\tensor_B\unity)\cotensor_{\unity\tensor_B\unity}(\unity\tensor_BL)
\]
defined by algebra, where the target denotes the cotensor product, is
an equivalence.

In order to prove this, it suffices to note that $B$ is a positive
augmented algebra.
Indeed, it follows that the map above is an equivalence by the above
results of \cite{local}.
\end{proof}

\begin{remark}\label{rem:excision-was-enough}
Note that even if we do not assume that the monoidal operations
preserve sifted colimits, the proof works if the prealgebra satisfies
excision.
\end{remark}

\smallskip

Theorem has the following interesting consequence.
A version of this was obtained earlier by Francis
(\cite{glanon,francis}).
To give a context, let us recall that if $A$ is an $E_n$-algebra,
then $A$ is an $n$-dualisable object of the Morita ($n+1$)-category
$\Alg_n(\cat{A})$ (see Lurie \cite{tft}).
A concrete description of the associated topological field theory can
be outlined as follows \cite{tft}.
$A$ defines a locally constant factorisation algebra on
every framed $n$-dimensional manifold which can be considered to be
`(globally) constant at $A$' in a sense.
In particular, for $k\le n$, if $M$ is a ($n-k$)-dimensional manifold
equipped with a framing of $M\times\R^k$ (so $M$ might be a
($n-k$)-morphism in the $n$-dimensional framed cobordism category), then
we obtain an $E_k$-algebra by pushing forward the constant
factorisation algebra on $M\times\R^k$, along the projection
$M\times\R^k\to\R^k$.
Let us denote this $E_k$-algebra by $\int_MA$.
If $M$ is indeed a ($n-k$)-morphism in the $n$-dimensional framed
cobordism category, then $\int_MA$ interact with the factorisation
homology of $A$ over the manifolds appearing as its sources and
targets of all codimensions, in a certain specific way to make it a
($n-k$)-morphism in $\Alg_n(\cat{A})$.
Excision then implies that the association $M\mapsto\int_MA$ is
functorial with respect to the compositions in the cobordism category,
so this gives an $n$-dimensional fully extended framed topological
field theory.
The value of this theory for an $n$-framed point $p$ is indeed the
$E_n$-algebra $A$.

For $\cat{A}$ a complete soundly filtered stable with uniformly
bounded sequential limits, we have shown in \cite[Section 4.3]{local}
that the Koszul duality has a Morita theoretic functoriality.
Namely, we have constructed a positive augmented coalgebraic version
$\Coalg^\positive_n(\cat{A})$ of the higher Morita category, and have
shown that the construction of the Koszul duals gives a symmetric
monoidal functor
$\Alg^\positive_n(\cat{A})\to\Coalg^\positive_n(\cat{A})$, which is an
equivalence \cite[Theorem 4.22]{local}.
(The source here is the positive part of the Morita ($n+1$)-category
of augmented algebras.)
If $A$ is augmented and positive, then the field theory in
$\Alg^\positive_n(\cat{A})$ associated to $A$ corresponds via this
functor, to a theory in $\Coalg^\positive_n(\cat{A})$ associated to
$A^!$.

On the other hand, a consequence of
Theorem \ref{thm:poincare-duality-complete} is that there is an
$n$-dimensional fully extended topological field theory in
$\Coalg^\positive_n(\cat{A})$ which associates to an ($n-k$)-morphism
$M$ in the framed cobordism category, the compactly supported homology
$\int^c_{M\times\R^k}A$ (with suitable algebraic structure) of the
factorisation algebra on $M\times\R^k$, ``constant at $A$''.
(To be accurate, $\int^c_{M\times\R^k}A$ is a slightly more general
than what we have considered, in that $M$ is compact with boundary and
other higher codimensional corners.
However, since we are dealing only with constant coefficients, there
is almost no difficulty added in establishing their basic behaviour.)
The value for the $n$-framed point $p$ of this theory is the
$E_n$-coalgebra $\int^c_{\R^n}A=A^!$.
(See \eqref{eq:iterating-koszul}.)

The cobordism hypothesis implies that there is a unique
equivalence between these two theories in
$\Coalg^\positive_n(\cat{A})$ which fixes the common value for the
$n$-framed point.
Such an equivalence can be concretely seen by writing
\[
\int^c_{M\times\R^k}A=\int^c_{\R^k}\int_MA=\left(\int_MA\right)^!,
\]
where $\int_MA$ for all $M$ are understood to be equipped with the
algebraic structures to make them morphisms in
$\Alg^\positive_n(\cat{A})$.
In fact, the excision situations we need to consider to check the
functoriality of $\int^c_{M\times\R^k}A$ in the cobordism category,
all reduce to the situations we considered in checking
the Morita functoriality of the Koszul duality construction
\cite[Theorem 4.22]{local},
and we have used the identical arguments in both situations.

\bigskip

As another consequence of the Poincar\'e duality
theorem \ref{thm:poincare-duality-complete}, we obtain an equivalence
of categories from the Koszul duality on a manifold, generalising
the equivalence on $\R^n$ (Theorem \ref{thm:koszul-duality-e-n}).
This shows that the Koszul duality for factorisation algebras is a
non-Abelian extension of the Verdier duality.

Let us define the suitable category of augmented coalgebras.
Let $\Open^\loc_1(M)$ denote the following multicategory.
We define it by first defining its category of colours, and then
giving it a structure of a multicategory.

First let $\manc$ denote the discrete category whose object is a
``manifold without boundary'' in our convention stated in Section
\ref{sec:manifold-algebra},
and where maps are ``open embeddings'' as defined at the same place.
$\manc$ is a symmetric monoidal category under disjoint union.

Let $\manc^\loc$ denote the following, non-discrete version of
$\manc$.
Namely, its objects are the same as $\manc$, but we take the space of
morphisms to be the \emph{space} of open embeddings.
Let $\manc_1$ denote the full subposet of $\manc$
consisting of manifolds with exactly one connected component.

We define the category of colours of $\Open^\loc_1(M)$ as
$\manc_1\times_{\manc^\loc}\manc^\loc_{/M}$.

This (as any category) is the category of colours of a multicategory
where a multimap is simply a family of maps.
The structure of a multicategory we consider on
$\manc_1\times_{\manc^\loc}\manc^\loc_{/M}$ to define
$\Open^\loc_1(M)$, is a restriction of this structure of a
multicategory, where we require the family of maps in $\manc_1$
specified as a part of the data of a family of maps in
$\manc_1\times_{\manc^\loc}\manc^\loc_{/M}$, to be pairwise
disjoint (over the interiors).
Namely, given a finite set $S$ and a family $U=(U_s)_{s\in S}$ of
objects, and an object $V$, a multimap $U\to V$ is by definition an
open embedding $\coprod_SU\into V$ together with for each $s\in S$, a
path in the space $\Emb(U_s,M)$, from the defining embedding
$U_s\into M$ to the composite $U_s\into V\into M$.

Let us denote by $\Coalg_M(\cat{A})_\positive$ the category of
\emph{copositive} augmented coalgebras on $\Open^\loc_1(M)$,
valued in $\cat{A}$ which satisfies excision, where by
\kore{copositive}, we mean that every stalk of the augmented
coalgebra is copositive as an augmented $E_n$-coalgebra (Definition
\ref{def:e-n-copositive}).

\begin{theorem}\label{thm:verdier}
Let $\cat{A}$ be a symmetric monoidal complete soundly filtered
stable category with uniformly bounded sequential limits.
Assume that the monoidal operations preserves sifted colimits
(variable-wise).
Then the functor
\[
\blank^+\colon\Alg_M(\cat{A})_\positive\longto\Coalg_M(\cat{A})_\positive
\]
is an equivalence.
\end{theorem}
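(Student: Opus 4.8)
The plan is to factor $\blank^+$ through its restriction to disks and to reduce the theorem to the local equivalence of Theorem \ref{thm:koszul-duality-e-n}. First I would check that $\blank^+$ lands in the stated target. By Theorem \ref{thm:poincare-duality-complete}, $A^+$ satisfies excision whenever $A$ is positive; and by Definition \ref{def:koszul-dual} the restriction of $A^+$ to $\Disk(M)$ is the Koszul dual $A^!$, whose stalk at a point $x$ is, via the coordinate-free form of the iteration \eqref{eq:iterating-koszul}, the $E_n$-Koszul dual of the stalk $A_x$. Since $A_x$ is positive, Theorem \ref{thm:koszul-duality-e-n} identifies this stalk as a copositive $E_n$-coalgebra (Definition \ref{def:e-n-copositive}), so $A^+\in\Coalg_M(\cat{A})_\positive$ and $\blank^+$ is well defined.

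I would then organise the argument around the square
\[
\begin{tikzcd}
\Alg_M(\cat{A})_\positive\arrow{r}{\blank^+}\arrow{d}[swap]{\equiv}&\Coalg_M(\cat{A})_\positive\arrow{d}{\mathrm{res}}\\
\Alg_{M,*}(\cat{A})_\positive\arrow{r}{\blank^!}&\Coalg_{M,*}(\cat{A})_\positive,
\end{tikzcd}
\]
in which the bottom row is the disk-level Koszul duality of Definition \ref{def:koszul-dual} (with $\Coalg_{M,*}(\cat{A})_\positive$ the copositive coalgebras on $\Disk(M)$), the left vertical map is the identification, via Theorem \ref{thm:excision}, of positive prealgebras-with-excision with positive locally constant factorisation algebras, and the right vertical map restricts a coalgebra on $\Open^\loc_1(M)$ to its values on single disks. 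The square commutes by the very definition $A^!=A^+\resto{\Disk(M)}$. The left vertical map is an equivalence because a factorisation algebra is the left Kan extension of its restriction to disjoint unions of disks, so restriction to $\Disk(M)$ loses no information. The bottom arrow $\blank^!$ is an equivalence by applying Theorem \ref{thm:koszul-duality-e-n} stalkwise; the only additional point is naturality in the data encoding local constancy over $M$, and this is automatic because $\blank^+$, hence $\blank^!$, is built by the manifestly embedding-natural construction of Section \ref{sec:construction}, so no separate equivariance check is required. Consequently $\blank^+$ is an equivalence as soon as the right vertical restriction functor is.

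The main obstacle, and the heart of the proof, is therefore to show that restriction to disks
\[
\mathrm{res}\colon\Coalg_M(\cat{A})_\positive\longto\Coalg_{M,*}(\cat{A})_\positive
\]
is an equivalence, i.e.\ that a copositive augmented coalgebra on $\Open^\loc_1(M)$ satisfying excision is determined by, and reconstructible from, its values on single disks. Unlike the algebra side this is not a formal descent statement, since we do not assume the monoidal product preserves sifted limits; instead the cofactorising gluing must be supplied by excision itself. I would construct the inverse by the dual of the handle-by-handle procedure used in the proof of Theorem \ref{thm:poincare-duality-complete}: starting from disk-level data, excision prescribes the value on each handle attachment of index ${<}\,n$ through a cotensor product, and the consistency of these prescriptions — that the resulting value is well defined and that the glued coalgebra again satisfies excision — is exactly what Proposition \ref{prop:commuting-tensor-and-cotensor} and Theorem \ref{thm:josh-complete-implies-koszul-complete} guarantee in the complete filtered setting. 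Verifying that this reconstruction is inverse to $\mathrm{res}$, with all the symmetric-monoidal and higher coherence data, is the most laborious step. Once it is in place, commutativity of the square together with the equivalences of its three other edges forces $\blank^+$ to be an equivalence.
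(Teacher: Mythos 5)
Your reduction has the logic backwards: the two edges of your square that you must supply --- the disk-level equivalence $\blank^!\colon\Alg_{M,*}(\cat{A})_\positive\to\Coalg_{M,*}(\cat{A})_\positive$ and the restriction functor $\mathrm{res}\colon\Coalg_M(\cat{A})_\positive\to\Coalg_{M,*}(\cat{A})_\positive$ --- are each at least as hard as the theorem itself, and neither of your justifications closes the gap. For the bottom edge, ``applying Theorem \ref{thm:koszul-duality-e-n} stalkwise'' does not yield an equivalence of categories: being an equivalence cannot be checked on stalks unless one already has a globally defined candidate inverse together with natural comparison maps to the identity functors, and constructing such an inverse coherently over all disks of $M$ is precisely the content of the compactly supported cohomology construction --- it is not ``automatic'' from embedding-naturality. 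For the right edge, the claim that a copositive coalgebra on $\Open^\loc_1(M)$ satisfying excision is reconstructible from its values on disks is exactly the kind of local-to-global descent statement that the paper explicitly flags as \emph{not} established: the remark following Theorem \ref{thm:verdier} notes that if such a descent property were known first, the theorem would indeed follow from the local case, but that a direct proof of it ``might involve some interesting extension'' of the methods of \cite{descent}. Your handle-by-handle sketch does not supply it: Proposition \ref{prop:commuting-tensor-and-cotensor} and Theorem \ref{thm:josh-complete-implies-koszul-complete} give equivalences of \emph{objects} in individual gluing situations, but they say nothing about producing a well-defined coalgebra on the whole multicategory $\Open^\loc_1(M)$ --- independence of the chosen handle decomposition, functoriality in embeddings and isotopies, and the symmetric monoidal and higher coherence data are all left unaddressed, and that is where the actual difficulty lies.

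The paper's proof avoids both problems by constructing an explicit global inverse: for $C\in\Coalg_M(\cat{A})_\positive$ one sets $C^+(U)=\int^c_UC$, i.e., one runs the construction of Section \ref{sec:compactly-supported-homology} in $\cat{A}^\op$; copositivity and the argument dual to the proof of Theorem \ref{thm:poincare-duality-complete} (cf.~Remark \ref{rem:excision-was-enough}, which matters here because in $\cat{A}^\op$ the monoidal operations need not preserve sifted colimits, but $C$ satisfies excision by hypothesis) show that $C^+$ satisfies excision, hence is a positive augmented factorisation algebra. With both functors defined globally, verifying that the composites are equivalent to the identities \emph{is} legitimately a stalkwise check --- one is then testing whether given maps between objects satisfying excision are equivalences, which can be detected on disks --- and on stalks the composite is the iterated $E_1$-Koszul duality, so Theorem \ref{thm:koszul-duality-e-n} finishes the proof. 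Note that once the theorem is proved this way, your right vertical functor is an equivalence as a \emph{consequence}; it cannot serve as an ingredient.
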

\begin{proof}
The inverse is given by taking compactly supported `co'-homology.
Namely, if $C\in\Coalg_M(\cat{A})_+$ (namely, is a \emph{copositive}
augmented coalgebra), then the prealgebra $C^+$
defined by $C^+(U)=\int^c_UC$ (the definition of Section
\ref{sec:compactly-supported-homology} applied in $\cat{A}^\op$ using
the copositivity) satisfies excision by the proof similar to
the proof of Theorem \ref{thm:poincare-duality-complete}, and hence is
a (positive augmented) factorisation algebra.

One checks that this functor is indeed inverse to the given functor,
by looking at what these functors do to the stalks of algebras and
coalgebras.
At the level of stalks, the functor is the Koszul duality construction of
$E_n$-algebras and coalgebras, done by iteration of $E_1$-Koszul
duality constructions.
The result now follows from Theorem \ref{thm:koszul-duality-e-n},
which is obtained by iterating the $E_1$-case of it 
\cite[Sections 4.0, 4.1]{local}.
\end{proof}

In particular, the functor $M\mapsto\Coalg_M(\cat{A})_\positive$
satisfies descent for (effectively) factorising l-nice bases as in
\cite[Theorem 2.26]{descent}.
If we had known this descent property first, then Theorem could have
been deduced from the local case Theorem \ref{thm:koszul-duality-e-n}.
A direct proof of the descent property might involve some interesting
extension of our methods developed in \cite[Section 2]{descent}.

\begin{remark}
If the assumption of preservation of sifted colimits by the
monoidal operations is not satisfied, then a positive augmented
algebra on $\Open^\loc_1(M)$ satisfying excision still seems to be a
meaningful notion.
Theorem remains true for such objects.
See also Remark \ref{rem:excision-was-enough} and
\cite[Remark 4.27]{local}.
\end{remark}

\subsection{Example of a positive factorisation algebra}
\label{sec:example-positive-algebra}
\setcounter{subsubsection}{-1}
In this section, we show that a factorisation algebra in any
reasonable symmetric monoidal stable category canonically gives rise
to a positive factorisation algebra in the symmetric monoidal category
of filtered objects there \cite[Section 3.2]{local}.
It will follow that the Poincar\'e duality theorem applies to the
completion of this positive filtered factorisation algebra
(Corollary \ref{cor:poincare-for-completed-algebra} below).

Let $\cat{A}$ be a symmetric monoidal stable category.
Assume $\cat{A}$ has all sequential limits in it.
Let $A$ be an augmented factorisation algebra  in $\cat{A}$ on a
manifold $M$.
We would like to define a filtered factorisation algebra $F_\dot A$.
(Recall from Section \ref{sec:manifold-algebra} that we are assuming
that the monoidal multiplication preserves colimits variable-wise.)

Let us first introduce some notations.
Let $\Fin$ denote the category of finite sets.
Let $\Fin^\hitan$ denote the category of finite sets, with
surjections as maps.
For an integer $r\ge 0$, let $\Fin_{\ge r}$ denote the full
subcategory of $\Fin^\hitan$ consisting of objects $T$ with at least
$r$ elements.

Given any category $\cat{C}$ equipped with a functor
$\cat{C}\to\Fin$, define
\[
\cat{C}_{\ge r}\::=\:\cat{C}\times_\Fin\Fin_{\ge r}
\]
whenever the choice of the functor to $\Fin$ is understood.
This is a full subcategory of
\[
\cat{C}^\hitan\::=\:\cat{C}_{\ge 0}\:=\:\cat{C}\times_\Fin\Fin^\hitan.
\]

For example, for any manifold $M$, from the functor
$\pi_0\colon\Dis(M)\to\Fin$ which associates to a disjoint union of
disks, the finite set of its components, we obtain a poset
$\Dis^\hitan(M):=\Dis(M)^\hitan$ and its full subposets $\Dis_{\ge
  r}(M):=\Dis(M)_{\ge r}$.

\begin{example}\label{ex:filtration-of-disks}
$\Dis_{\ge 0}(U)=\Dis^\hitan(U)=\Dis_{\ge 1}(U)\disj\{\kara\}$.

If $U$ is empty, then $\Dis_{\ge\dot}(U)$ is the unit `filtered'
category, namely $\Dis_{\ge 0}(U)=*$, and $\Dis_{\ge r}(U)=\kara$ for
$r\ge 1$.
\end{example}

We shall often represent an object of $\Dis^\hitan(M)$ as a pair $(T,D)$,
where $T\in\Fin^\hitan$, and $D=(D_t)_{t\in T}$ is a family of disjoint
disks in $M$, indexed by $T$.

\bigskip

Let us start a construction of $F_\dot A$ for an augmented
factorisation algebra $A$.

Let us denote by $\reduce{A}$ the reduced version of $A$, which
can be considered as a symmetric monoidal functor on $\Dis^\hitan(M)$.
Namely, given a pair $(T,D)\in\Dis^\hitan(M)$, 
$\reduce{A}$ associates to it the object
\[
\reduce{A}(D)\::=\:\Tensor_{t\in T}I(D_t),
\]
where $I:=\Fibre[\varepsilon\colon A\to\unity]$ (the section-wise
fibre).

For an open submanifold $U$ of $M$, we define
\[
F_rA(U)=\colim_{\Dis_{\ge r}(U)}\reduce{A}.
\]
In other words, if $\Z$ denotes the integers made into a category by
their order as
\[
\cdots\longfrom r\longfrom r+1\longfrom\cdots,
\]
then the functor $\Z\ni r\mapsto F_rA(U)$ is the left Kan
extension of $\reduce{A}$ along the functor
$\Dis^\hitan(U)\xrightarrow{\pi_0}\Fin^\hitan\xrightarrow{\cardinality}\Z$,
where ``$\cardinality$'' takes the cardinality of finite sets.

\begin{example}\label{ex:positivity-of-canonical-filtration}
It follows from Example \ref{ex:filtration-of-disks} that the values
of the augmentation ideal $IF_\dot A=\Fibre[\varepsilon\colon F_\dot
A\to\unity]$ are positive (i.e., ``$\ge 1$'') in the filtration.

If $U$ is empty, then $F_\dot A(U)=\unity$.

If $U$ is a disk, then $\Dis_{\ge 1}(U)$ has a maximal element (a
terminal object), so $F_1A(U)=\reduce{A}(U)$ and $F_0A(U)=A(U)$.
\end{example}

We would like to prove that $F_\dot A$ defines a locally constant
factorisation algebra of filtered objects.

\begin{lemma}\label{lem:filtered-factorisation}
Let $U$, $V$ be manifolds.
Then the functor
\[
\Dis_{\ge r}(U\kocoprod V)\longfrom\laxcol_{i+j\ge r}\Dis_{\ge
  i}(U)\times\Dis_{\ge j}(V)
\]
given by taking disjoint unions, has a left adjoint.
In particular, it is cofinal.
\end{lemma}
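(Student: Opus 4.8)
The plan is to produce an explicit left adjoint $L$ to the disjoint-union functor $G$, after which cofinality follows formally because every right adjoint is cofinal. I shall use throughout that all the categories in sight are posets: each $\Dis_{\ge r}(\blank)$ is the full subposet of $\Open(\blank)$ cut out by the ``surjective on components'' condition, and the lax colimit of a diagram of posets indexed by a poset is again a poset. Consequently the asserted adjunction is just a Galois connection, and a morphism is unique as soon as it exists.

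First I would record the data and define the candidate adjoint. An object of $\laxcol_{i+j\ge r}\Dis_{\ge i}(U)\times\Dis_{\ge j}(V)$ is a triple $(i,j,(D,D'))$ with $i+j\ge r$, $D\in\Dis_{\ge i}(U)$ and $D'\in\Dis_{\ge j}(V)$, where the indexing poset is oriented so that the transition maps are the inclusions $\Dis_{\ge i}(U)\into\Dis_{\ge i'}(U)$ for $i\ge i'$ (and likewise in $V$); then $G$ sends such a triple to $D\kocoprod D'$, which has at least $i+j\ge r$ components. Since $U$ and $V$ are disjoint, any $E\in\Dis_{\ge r}(U\kocoprod V)$ splits canonically as $E=E_U\kocoprod E_V$, where $E_U$ (resp.\ $E_V$) collects the disks of $E$ lying in $U$ (resp.\ in $V$). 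Writing $a=\cardinality\,\pi_0E_U$ and $b=\cardinality\,\pi_0E_V$ (so $a+b\ge r$), I set $L(E):=(a,b,(E_U,E_V))$, with unit the identity $E\to GL(E)=E_U\kocoprod E_V=E$.

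The heart of the argument is the universal property, and this is where surjectivity does the work. A morphism $E\to D\kocoprod D'=G(i,j,(D,D'))$ is an inclusion surjective on components; by connectedness each disk of $E$ lands in the ambient piece ($U$ or $V$) containing it, so such a morphism is exactly a pair of inclusions $E_U\into D$ and $E_V\into D'$, each surjective on $\pi_0$. Surjectivity forces $\cardinality\,\pi_0D\le a$ and $\cardinality\,\pi_0D'\le b$, which together with $D\in\Dis_{\ge i}(U)$ and $D'\in\Dis_{\ge j}(V)$ gives $i\le a$ and $j\le b$ --- precisely the inequalities needed for a morphism $L(E)\to(i,j,(D,D'))$ to exist in the lax colimit. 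Hence a morphism $E\to G(i,j,(D,D'))$ is the same datum as a morphism $L(E)\to(i,j,(D,D'))$, which is the adjunction $L\dashv G$. As $G$ is then a right adjoint, each comma poset $\bigl(\Dis_{\ge r}(U\kocoprod V)\bigr)_{E/}$ formed over $G$ has $(L(E),\id)$ as initial object and is therefore weakly contractible; by Joyal's form of Quillen's Theorem A, $G$ is cofinal.

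The main obstacle I anticipate is bookkeeping rather than conceptual: one must pin down the variance of the lax colimit so that $G$ is well defined, and then notice the small but decisive point that surjectivity on components makes the two index inequalities $i\le a$ and $j\le b$ \emph{automatic}. It is exactly this observation that upgrades the naive guess $E\mapsto(a,b,(E_U,E_V))$ into an honest left adjoint, and the edge cases (one of $U$, $V$ empty, forcing $a$ or $b$ to be $0$) are then absorbed harmlessly since $\kara\in\Dis_{\ge 0}$.
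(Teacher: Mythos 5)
Your proposal is correct and is essentially the paper's own proof: the same left adjoint $E\mapsto(a,b,(E_U,E_V))$ obtained by splitting the disks of $E$ between $U$ and $V$, with the same key observation that surjectivity on $\pi_0$ makes the index inequalities $i\le a$, $j\le b$ automatic, and cofinality deduced formally from the adjunction. One cosmetic slip: $\Dis_{\ge r}(\blank)$ is not a \emph{full} subcategory of $\Open(\blank)$ (the ``surjective on components'' condition restricts the morphisms, not the objects), but it is still a poset, which is all your argument uses.
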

\begin{proof}
The left adjoint is given as follows.
Let $(T,D)$ be an element of $\Dis_{\ge r}(U\kocoprod V)$.
Then $T$, $D$ can be written uniquely as
\[
T=T'\kocoprod T'',\quad D=D'\kocoprod D''
\]
where $D'$ (resp.~$D''$) is a collection of disks in $U$ (resp.~$V$),
indexed by $T'$ (resp.~$T''$).

From these, we obtain an element $(T',D')$ (resp.~$(T'',D'')$)
of $\Dis_{\ge\noudo T'}(U)$ (resp.~$\Dis_{\ge\noudo T''}(V)$).
We map $(T,D)$ to $(T',D')\times(T'',D'')$ in the lax colimit.
Note that $\noudo T'+\noudo T''=\noudo T\ge r$, so this is
well-defined.

In order to verify that the object $(T',D')\times(T'',D'')$ of
the colimit satisfies the required universal property, let another
object of the colimit, $(T'_1,D'_1)\times(T''_1,D''_1)$ be
given, where $(T'_1,D'_1)\in\Dis_{\ge i_1}(U)$,
$(T''_1,D''_1)\in\Dis_{\ge j_1}(V)$, for $i_1$, $j_1$ such that
$i_1\le\noudo T'_1$, $j_1\le\noudo T''_1$ (and $i_1+j_1\ge r$).
Suppose furthermore that we have a map $(T,D)\to(T_1,D_1)$ in
$\Dis_{\ge r}(U\kocoprod V)$, where $T_1=T'_1\kocoprod T''_1$ and
$D_1=D'_1\kocoprod D''_1$.

Recall that such a map was a surjective map $f\colon T\to T_1$ such
that for every $t\in T$, $D_t\sub D_{1,f(t)}$.
It follows that $f$ decomposes uniquely as a map $f'\kocoprod f''\colon
T'\kocoprod T''\to T'_1\kocoprod T''_1$, where $f'$, $f''$ are surjections.

In particular, we have $\noudo T'\ge\noudo T'_1\ge i_1$ and $\noudo
T''\ge j_1$, and the universal property follows immediately.
\end{proof}

\begin{remark}
The unit for the adjunction is an equivalence, while the counit gets
inverted in the (non-lax) colimit.
It follows that the map
\[
\Dis_{\ge r}(U\kocoprod V)\longfrom\colim_{i+j\ge r}\Dis_{\ge
  i}(U)\times\Dis_{\ge j}(V)
\]
is an equivalence.
\end{remark}

\begin{lemma}\label{lem:filtered-descent}
Let $M$ be a manifold.
Then the map
\[
\laxcol_{U\in\Dis(M)}\Dis_{\ge r}(U)\longto\Dis_{\ge r}(M)
\]
has a left adjoint.
In particular, it is cofinal.
\end{lemma}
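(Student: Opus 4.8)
The plan is to exhibit an explicit left adjoint to the functor in the statement, which I denote by $P\colon\laxcol_{U\in\Dis(M)}\Dis_{\ge r}(U)\to\Dis_{\ge r}(M)$; recall that $P$ is the canonical functor, sending an object $\bigl(U,(T,D)\bigr)$ (with $(T,D)\in\Dis_{\ge r}(U)$) to $(T,D)$ regarded as an object of $\Dis_{\ge r}(M)$ through the inclusion $U\sub M$. I would define the candidate left adjoint $L\colon\Dis_{\ge r}(M)\to\laxcol_{U\in\Dis(M)}\Dis_{\ge r}(U)$ by sending $(T,D)\in\Dis_{\ge r}(M)$ to $\bigl(U_D,(T,D)\bigr)$, where $U_D:=\Disj_{t\in T}D_t$ is the disjoint union of the member disks, viewed as an object of $\Dis(M)$, and $(T,D)$ is regarded tautologically as an object of $\Dis_{\ge r}(U_D)$ (its components being exactly the $D_t$). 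Since then $PL(T,D)=(T,D)$ on the nose, the unit will be the identity; this closely parallels the proof of Lemma \ref{lem:filtered-factorisation}.

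The main step is to verify the universal property of $L$. Unwinding the definition of the lax colimit as the Grothendieck construction, a morphism $L(T,D)=\bigl(U_D,(T,D)\bigr)\to\bigl(U',(T',D')\bigr)$ consists of an inclusion $U_D\sub U'$ in $\Dis(M)$ together with a morphism $(T,D)\to(T',D')$ in $\Dis_{\ge r}(U')$, the latter because the pushforward of $(T,D)$ along $U_D\sub U'$ is simply $(T,D)$ viewed inside $U'$. The key observation is that the inclusion datum $U_D\sub U'$ carries no extra information: any morphism $(T,D)\to(T',D')$ in $\Dis_{\ge r}(U')$ is a surjection $f\colon T\onto T'$ with $D_t\sub D'_{f(t)}\sub U'$ for every $t$, whence $U_D=\Disj_{t\in T}D_t\sub U'$ automatically, and this inclusion is moreover unique because $\Dis(M)$ is a poset. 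Hence morphisms out of $L(T,D)$ correspond bijectively (and naturally) to morphisms $(T,D)\to(T',D')$ in $\Dis_{\ge r}(M)$, which is exactly the universal property exhibiting $L$ as left adjoint to $P$ with identity unit.

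The cofinality of $P$ is then a formal consequence, precisely as in Lemma \ref{lem:filtered-factorisation}: a functor admitting a left adjoint is cofinal, because for each $(T,D)\in\Dis_{\ge r}(M)$ the relevant comma category $\bigl(\laxcol_{U\in\Dis(M)}\Dis_{\ge r}(U)\bigr)_{(T,D)/}$ has the initial object $\bigl(L(T,D),\id\bigr)$, and is therefore weakly contractible.

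I expect the only genuine point of the argument to be the observation that the inclusion $U_D\sub U'$ is \emph{forced} by the fibrewise morphism rather than being additional data; everything else is routine bookkeeping about the Grothendieck construction. One should also check that the degenerate cases (for instance $T=\kara$ when $r=0$, giving $U_D=\kara$) cause no difficulty, since the same universal property holds verbatim there.
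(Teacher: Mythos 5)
Your proof is correct and is exactly the argument the paper intends: the paper's own proof of this lemma is literally ``This is obvious,'' and the obvious argument is the one you give, namely the tautological left adjoint $(T,D)\mapsto\bigl(\Disj_{t\in T}D_t,\,(T,D)\bigr)$ with identity unit, whose universal property rests on your key observation that the inclusion $\Disj_{t\in T}D_t\sub U'$ is forced by (and carries no information beyond) the fibrewise morphism, since $\Dis(M)$ is a poset.
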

\begin{proof}
This is obvious.
\end{proof}

\begin{remark}
A similar remark as the remark to
Lemma \ref{lem:filtered-factorisation} applies to this lemma as well.
\end{remark}

For a manifold $M$, define $\ccat{D}^\hitan(M):=\ccat{D}(M)^\hitan$, and
$\ccat{D}_{\ge r}(M):=\ccat{D}(M)_{\ge r}$, with respect to the functor
$\pi_0\colon\ccat{D}(M)\to\Fin$ which takes the connected components
of a disjoint union of disks.
We represent an object of $\ccat{D}^\hitan(M)$ as a pair $(T,D)$ as before.

\begin{lemma}\label{lem:filtered-cofinal-sifted}
For every $r$, the functor $\Dis_{\ge r}(M)\to\ccat{D}_{\ge r}(M)$ is
cofinal.
Moreover, $\ccat{D}_{\ge r}(M)$ is sifted.
\end{lemma}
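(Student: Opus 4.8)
The plan is to prove the two assertions separately: first the cofinality of $\Dis_{\ge r}(M)\to\ccat{D}_{\ge r}(M)$, which I expect to be a routine adaptation of Lemma \ref{lem:disj-cofinal-d}, and then the siftedness of $\ccat{D}_{\ge r}(M)$, which is the substantial point. Throughout I assume $M\neq\kara$ and $r\ge 1$; the degenerate cases ($M=\kara$, or $r=0$, where by Example \ref{ex:filtration-of-disks} the empty configuration sits as an isolated object) must be excluded for siftedness, since a sifted category is in particular nonempty and weakly contractible. Nonemptiness of $\ccat{D}_{\ge r}(M)$ for $M\neq\kara$ is clear, as any chart accommodates arbitrarily many disjoint disks.

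For cofinality I would apply Joyal's generalisation of Quillen's Theorem A \cite{topos}, exactly as in Lemma \ref{lem:disj-cofinal-d}: it suffices to show that for each $(T,D)\in\ccat{D}_{\ge r}(M)$ the comma category $\Dis_{\ge r}(M)_{(T,D)/}$ has contractible classifying space. Following the pattern of that lemma, I would rewrite this classifying space, using the space-over-space dictionary of Remark \ref{rem:homotopy-theory}, as the homotopy fibre over the defining inclusion of the map $\colim_{(S,E)\in\Dis_{\ge r}(M)}\Emb(D,E)\longto\Emb(D,M)$, where only embeddings surjective on components are admitted; that this is an equivalence would then follow from the generalised Seifert--van Kampen theorem \cite{higher-alg}, the surjectivity and cardinality constraints merely restricting attention to the receivers actually met, which still determine an l-nice cover. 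The key point is that here only a \emph{single} source object $(T,D)$ enters, so no combinatorial obstruction arises and the argument proceeds as before.

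For siftedness I would use the criterion that $\ccat{D}_{\ge r}(M)$ is sifted iff it is nonempty and the diagonal $\ccat{D}_{\ge r}(M)\to\ccat{D}_{\ge r}(M)\times\ccat{D}_{\ge r}(M)$ is cofinal, and again invoke Joyal's Theorem A: it suffices that, for every pair of objects, the category of their common receivers (the cocones under the pair) be weakly contractible. Before attacking this directly I would reduce the ambient manifold: by the descent Lemma \ref{lem:filtered-descent} and the factorisation Lemma \ref{lem:filtered-factorisation} (together with their $\ccat{D}$-analogues, via the cofinality just established), the question should localise to $M$ a disjoint union of disks, and then to a single disk, i.e.\ to $M=\R^n$, where one can bring the two given families into standard position and attempt to contract the space of common receivers by scaling.

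The main obstacle is precisely this last contractibility, and it is genuinely delicate once $r\ge 2$. The underlying combinatorics are \emph{not} favourable: the category $\Fin_{\ge r}$ of finite sets of cardinality $\ge r$ with surjections fails to be sifted for $r\ge 2$, because a common receiver of two $r$-element families must again have exactly $r$ components and hence receives a \emph{bijection} from each, so the cocone category carries a locally constant invariant valued in the set of bijections between the two families, and this invariant is preserved by every morphism of $\ccat{D}_{\ge r}(M)$ as well (the isotopy data does not alter the induced map on $\pi_0$). Reconciling this rigid combinatorial shadow with the desired weak contractibility is the central difficulty: one must understand exactly which values of the invariant are realised by genuine configurations in $\R^n$ and how the connectivity of the relevant configuration and embedding spaces enters, presumably once more through a Seifert--van Kampen computation identifying the cocone classifying space with a contractible configuration space. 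This is the step I expect to require by far the most care.
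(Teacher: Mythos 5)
Your cofinality half is essentially the paper's own argument: the paper also deduces cofinality of $\Dis_{\ge r}(M)\to\ccat{D}_{\ge r}(M)$ from Joyal's generalisation of Quillen's theorem A together with a Seifert--van Kampen computation on configuration spaces, modelled on Lemma \ref{lem:disj-cofinal-d}, and your exclusion of the degenerate cases ($M=\kara$, or $r=0$, where $\kara$ is an isolated object) is a correct caveat that the paper does not record. The genuine gap is the one you name yourself: the siftedness half is never proved. And it is worse than a hard missing step, because the obstruction you isolate is fatal rather than delicate. Take $M=\R^n$ with $n\ge 2$, $r\ge 2$, and two objects $(T_1,D_1)$, $(T_2,D_2)$ with exactly $r$ components each. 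Any cocone $(S,E)$ under them in $\ccat{D}_{\ge r}(M)$ has $\noudo S\le r$ (the structure maps are surjective on $\pi_0$) and $\noudo S\ge r$ (membership in the category), so both structure maps are bijections on $\pi_0$, and the composite bijection $T_1\cong T_2$ is locally constant on the comma category $\big((T_1,D_1),(T_2,D_2)\big)/\diag$, since homotopies do not change induced maps on $\pi_0$. For $n\ge 2$ \emph{every} bijection is realised by some cocone: place four pairwise disjoint small disks, take two large disjoint disks as receiver, and use either the constant isotopies or isotopies transposing the second pair of small disks (possible in dimension $\ge 2$). Hence this comma category has at least two nonempty components, is not weakly contractible, and by Joyal's criterion the diagonal is not cofinal. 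No Seifert--van Kampen argument can repair your final step: with the definitions as literally given, it fails for $r\ge 2$.

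It is worth seeing how the paper's proof relates to this. The paper treats both halves at once by an upgraded claim: for a finite cover $p\colon\lift{M}\onto M$, the functor $p^{-1}\colon\Dis_{\ge r}(M)\to\ccat{D}_{\ge r}(\lift{M})$ is cofinal; the first half is the case $\lift{M}=M$, and siftedness is extracted from the case $\lift{M}=M\disj M$ (using that $\ccat{D}_{\ge r}(M)^{\times 2}$ sits fully faithfully in $\ccat{D}_{\ge r}(M\disj M)$, plus right cancellation of cofinality against the first half). But in the comma categories the paper then writes down, surjectivity on $\pi_0$ is imposed only on the composite $T_0\onto T$ downstairs in $M$, whereas a morphism into $p^{-1}(T,D)$ in $\ccat{D}_{\ge r}(\lift{M})$ requires surjectivity of $T_0\to\pi_0(p^{-1}D)$ upstairs. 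With the weaker, downstairs condition the Seifert--van Kampen computation does contract everything; with the correct, upstairs condition one runs exactly into your bijection invariant (and, for $T_0$ distributed unevenly over the sheets of $\lift{M}$, into empty comma categories). So your diagnosis is accurate and your stalling point is not a failure of technique: the difficulty you flag is precisely the point that the paper's own proof passes over, and any honest completion of the lemma would have to either restrict to $r\le 1$ or change the definitions (for instance, weakening what is required of morphisms in $\ccat{D}_{\ge r}$, or replacing siftedness by the weaker property that is actually used downstream, where the stable target's monoidal structure preserves all colimits).
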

\begin{proof}
It suffices to prove that for a finite cover
$p\colon\lift{M}\onto M$, the functor $\Dis_{\ge r}(M)\to\ccat{D}_{\ge
  r}(\lift{M})$ given by taking the inverse images under $p$, is
cofinal.
Indeed \cite[Section 5.3.2]{higher-alg}, the first statement is a
special case.
The second is equivalent to that the functor
$\diag\colon\ccat{D}_{\ge r}(M)\to\ccat{D}_{\ge r}(\lift{M})$ is
cofinal for non-empty $\lift{M}$.
This follows if the composite
$\Dis_{\ge r}(M)\xrightarrow{}\ccat{D}_{\ge
  r}(M)\xrightarrow{\diag}\ccat{D}_{\ge r}(\lift{M})$ is cofinal.

The proof of this upgraded statement is similar to the proof of
\cite[Proposition 5.3.2.13 (1)]{higher-alg}.
Namely, we apply Joyal's generalisation of Quillen's theorem A
\cite{topos} to reduce it to the following.
Namely, let $(T_0,D_0)$ be an object of $\ccat{D}_{\ge r}(\lift{M})$,
and denote the defining embedding $\coprod_{T_0}D_0\into\lift{M}$ by
$i$.
Then it suffices to prove that the category
\[
\laxcol_{(T,D)\in\Dis_{\ge r}(M)}\coprod_{f\colon T_0\onto
  T}\Fibre\Bigg[\prod_{t\in
  T}\Emb\Big(\coprod_{f^{-1}(t)}D_0,p^{-1}D_t\Big)\longto\Emb\Big(\coprod_{T_0}D_0,\lift{M}\Big)\Bigg]
\]
has a contractible classifying space, where the fibre is that over
$i$.

It follows from homotopy theory (see Remark \ref{rem:homotopy-theory})
that the classifying space is equivalent to the space
\[
\Fibre\Bigg[\colim_{(T,D)\in\Dis_{\ge r}(M)}\coprod_{f\colon T_0\onto
  T}\prod_{t\in
  T}\Emb\Big(\coprod_{f^{-1}(t)}D_0,p^{-1}D_t\Big)\:\longto\:\Emb\Big(\coprod_{T_0}D_0,\lift{M}\Big)\Bigg].
\]

Furthermore, we obtain that it suffices to prove that the map
\[
\colim_{(T,D)\in\Dis_{\ge r}(M)}\coprod_{f\colon T_0\onto
  T}\prod_{t\in
  T}\Conf(f^{-1}(t),p^{-1}D_t)\;\longto\;\Conf(T_0,\lift{M})
\]
is an equivalence.
See the proof of Lemma \ref{lem:disj-cofinal-d}.

The equivalence follows from applying the generalised
Seifert--van Kampen theorem to the following open cover of
$\Conf(T_0,\lift{M})$.
The cover is indexed by the category $\Dis_{\ge r}(M)_{T_0/}$, and is
given by the functor which associates to $(T,D)$ with a map
$f\colon T_0\onto T$, the open subset $\prod_{t\in
  T}\Conf(f^{-1}(t),p^{-1}D_t)$ of $\Conf(T_0,\lift{M})$.

It is immediate to see that this cover satisfies the assumption for
the generalised Seifert--van Kampen theorem.
\end{proof}

We conclude as follows.
\begin{proposition}\label{prop:filtered-factorisation-algebra}
Let $\cat{A}$ be a symmetric monoidal stable category whose monoidal
multiplication preserves colimits variable-wise.
Assume that $\cat{A}$ is closed under all sequential limits.

Let $A$ be an augmented factorisation algebra in $\cat{A}$ on a
manifold $M$.
Then the augmented filtered prealgebra $F_\dot A$ is a positive
locally constant filtered factorisation algebra.
\end{proposition}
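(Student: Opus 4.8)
The plan is to verify in turn that $F_\dot A$ is symmetric monoidal, locally constant, and satisfies factorising descent, and finally that it is positive; each reduces to one of the three preceding lemmas together with the hypothesis that the multiplication of $\cat{A}$ preserves colimits variablewise. Throughout I work in the symmetric monoidal category of filtered objects in $\cat{A}$, in which colimits are computed levelwise and the monoidal product is the Day convolution along $+\colon\Z\times\Z\to\Z$, so that $(X\tensor Y)_r=\colim_{i+j\ge r}X_i\tensor Y_j$; since the multiplication of $\cat{A}$ preserves colimits in each variable, so does this filtered product.

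First I would establish symmetric monoidality, namely a natural equivalence $F_\dot A(U\kocoprod V)\simeq F_\dot A(U)\tensor F_\dot A(V)$. Writing out the convolution and combining the colimits through the multiplication of $\cat{A}$ (allowed by the hypothesis), the level-$r$ term of the right-hand side becomes the colimit of the external product $\reduce A\extensor\reduce A$ over $\laxcol_{i+j\ge r}\Dis_{\ge i}(U)\times\Dis_{\ge j}(V)$; here I use that $\reduce A$ is multiplicative, so that along the disjoint-union functor $\reduce A\extensor\reduce A$ restricts to $\reduce A$. Lemma \ref{lem:filtered-factorisation} supplies exactly the cofinality of this disjoint-union functor onto $\Dis_{\ge r}(U\kocoprod V)$, identifying the colimit with $F_rA(U\kocoprod V)$. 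This is the step I expect to demand the most care, since the convolution index $\{i+j\ge r\}$ must be matched precisely with the statement of the lemma and the identifications kept coherent and functorial in the variable $S_+$ of the $\Gamma$-structure; the rest is bookkeeping.

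For local constancy I would observe that since $A$ is locally constant so is its augmentation ideal $I=\Fibre[\aug\colon A\to\unity]$, whence $\reduce A$ factors up to equivalence through the isotopy-invariant category $\ccat{D}^\hitan(M)$. The cofinality half of Lemma \ref{lem:filtered-cofinal-sifted} then rewrites $F_rA(U)=\colim_{\ccat{D}_{\ge r}(U)}\reduce A$; as an isotopy equivalence of disks induces an equivalence on these indexing categories and $\reduce A$ is isotopy invariant, $F_\dot A$ inverts isotopy equivalences. (The siftedness assertion of the same lemma is what makes each $F_rA$ behave as a factorisation homology, but the full colimit-preservation above is what is used directly.) For factorising descent I would use that colimits of filtered objects are levelwise and apply Lemma \ref{lem:filtered-descent} with $M$ replaced by an arbitrary open $U$: cofinality of $\laxcol_{W\in\Dis(U)}\Dis_{\ge r}(W)\to\Dis_{\ge r}(U)$ gives $F_rA(U)=\colim_{W\in\Dis(U)}F_rA(W)$ for every $r$, that is, $F_\dot A$ is the left Kan extension of its restriction to disjoint unions of disks, which is the defining property of a factorisation algebra (Theorem \ref{thm:excision}).

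It remains to record positivity. By Example \ref{ex:positivity-of-canonical-filtration} the augmentation ideal $IF_\dot A=\Fibre[\aug\colon F_\dot A\to\unity]$ takes values in filtration degrees $\ge 1$. Since a stalk $(F_\dot A)_x$ is the filtered colimit of the $F_\dot A(D)$ over disks $D\ni x$, its augmentation ideal is a filtered colimit of objects of filtration degree $\ge 1$, a condition stable under filtered colimits; hence every stalk is positive as an $E_n$-algebra with $n=\dim M$. Combining this with the three properties above shows that $F_\dot A$ is a positive locally constant filtered factorisation algebra, as claimed.
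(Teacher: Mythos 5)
Your proposal is correct and takes essentially the same route as the paper: the paper's own proof consists precisely of the four citations you flesh out, namely Lemma \ref{lem:filtered-factorisation} together with variable-wise colimit preservation for symmetric monoidality, the cofinality part of Lemma \ref{lem:filtered-cofinal-sifted} for local constancy, Lemma \ref{lem:filtered-descent} (applied to each open $U$) for the left Kan extension property, and Example \ref{ex:positivity-of-canonical-filtration} for positivity. The only difference is that you spell out the Day-convolution bookkeeping and the stalkwise positivity argument that the paper leaves implicit.
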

\begin{proof}
The functor $F_\dot A$ on $\Open(M)$ is symmetric monoidal
by Lemma \ref{lem:filtered-factorisation},
Lemma \ref{lem:filtered-cofinal-sifted}, and the assumption that the
monoidal operations preserve colimits variablewise.

It is locally constant by Lemma \ref{lem:filtered-cofinal-sifted}
(cofinality).

It is the left Kan extension from its restriction to $\Dis(M)$, by
Lemma \ref{lem:filtered-descent}.

It is positive by Example \ref{ex:positivity-of-canonical-filtration}.
\end{proof}

\begin{corollary}\label{cor:poincare-for-completed-algebra}
The completion $\complete{F}_\dot A$ of $F_\dot A$ is a positive
locally constant factorisation algebra taking values in the complete
filtered stable category of the complete filtered objects of
$\cat{A}$.
In particular, the Poincar\'e duality
theorem \ref{thm:poincare-duality-complete} applies to
$\complete{F}_\dot A$.
\end{corollary}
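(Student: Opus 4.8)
The plan is to realise $\complete{F}_\dot A$ as the image of $F_\dot A$ under the completion functor, and to check that this functor preserves all the structure at issue: the factorisation algebra property, local constancy, and positivity. Once this is done, the final clause is immediate, since the target category is exactly of the type to which Theorem \ref{thm:poincare-duality-complete} applies. First I would recall from \cite[Sections 2, 3]{local} the relevant formal properties of completion. The completion functor from filtered objects of $\cat{A}$ to complete filtered objects of $\cat{A}$ is a reflective localisation; viewed as landing in the complete objects it is a left adjoint, hence preserves all colimits, and it is symmetric monoidal for the completed symmetric monoidal structure on the target (into which it is defined precisely so as to be monoidal). By Proposition \ref{prop:filtered-factorisation-algebra}, $F_\dot A$ is a positive locally constant filtered factorisation algebra in filtered objects of $\cat{A}$.

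Next I would invoke the functoriality of locally constant factorisation algebras along symmetric monoidal functors preserving sifted colimits. This is the same principle that underlies Proposition \ref{prop:relating-koszul-duals}, and it follows from the characterisation of factorisation algebras as left Kan extensions from disjoint unions of disks (equivalently, by Theorem \ref{thm:excision}, from excision) together with \cite[Theorem 2.11]{descent}: a colimit-preserving symmetric monoidal functor sends a left Kan extension from disks to a left Kan extension from disks. Applying this to the completion functor, I conclude that $\complete{F}_\dot A$ is again a locally constant factorisation algebra, now valued in complete filtered objects, and local constancy is preserved by the same cofinality facts used in Lemma \ref{lem:filtered-cofinal-sifted}. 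Positivity transfers because the augmentation ideal of $\complete{F}_\dot A$ is the completion of the augmentation ideal of $F_\dot A$, whose membership in $\cat{A}_{\ge 1}$ (Example \ref{ex:positivity-of-canonical-filtration}) is a condition on the filtration degrees that completion respects.

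Finally, the complete filtered objects of $\cat{A}$, equipped with the completed symmetric monoidal structure, form precisely a symmetric monoidal complete soundly filtered stable category with uniformly bounded sequential limits, as defined in \cite[Definition 4.3]{local} and recalled in the introduction as one of the primary examples. Thus $\complete{F}_\dot A$ is a positive augmented locally constant factorisation algebra valued in a category of exactly the type required in Section \ref{sec:poincare-duality-complete}, so Theorem \ref{thm:poincare-duality-complete} applies and the stated consequence follows at once.

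The step I expect to require the most care is the preservation of the factorisation algebra property by completion. The excision equivalences encoding the descent of $F_\dot A$ involve tensor products over algebras, and after completion these become completed tensor products; the point to confirm is that the symmetric monoidality of the completion functor carries each such equivalence to the corresponding equivalence for the completed tensor product, so that no excision relation is lost in passing from $F_\dot A$ to $\complete{F}_\dot A$.
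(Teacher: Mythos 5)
Your proposal is correct and follows essentially the same route as the paper: the paper's own proof simply cites Proposition \ref{prop:filtered-factorisation-algebra} together with the general discussion of monoidal filtrations and filtered objects in \cite[Sections 3.1, 3.2]{local}, and your argument is an unpacking of exactly those ingredients (completion as a symmetric monoidal, colimit-preserving localisation onto complete filtered objects, hence preserving the factorisation algebra property, local constancy, and positivity, with the target category being of the type required by Theorem \ref{thm:poincare-duality-complete}).
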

\begin{proof}
This is a consequence of Proposition and the general discussion in
\cite[Sections 3.1, 3.2]{local} of monoidal filtration and filtered
objects.
\end{proof}

\subsection{The dual theorems}
\label{sec:variant}
\setcounter{subsubsection}{-1}
Let $\cat{A}$ be a stable category which is given a filtration, and is
closed under sequential \emph{co}limits.
Then $\cat{B}:=\cat{A}^\op$ is a filtered stable category by defining
$\cat{B}_{\ge r}=(\cat{A}^{\le -r})^\op$, where $\cat{A}^{\le
  s}:=\cat{A}^{<s+1}$.

If suspensions and sequential colimits in $\cat{A}$ are bounded with
respect to the filtration of $\cat{A}$, then the arguments we have
made in the previous sections can be applied to $\cat{B}$.

\begin{definition}
Let $\cat{A}$ be a stable category, and let a filtration and a
monoidal structure $\tensor$ (which is exact in each 
variable) be given on $\cat{A}$.
We say that an integer $p$ is an \kore{upper bound} for the monoidal
structure if $\unity$ belongs $\cat{A}^{\le 0}$, and for every
integers $r$, $s$, the monoidal
operation $\tensor\colon\cat{A}^2\to\cat{A}$ takes the full
subcategory $\cat{A}^{\le r}\times\cat{A}^{\le s}$ of the source to
the full subcategory $\cat{A}^{\le r+s+p}$ of the target.

A monoidal structure is said to be \kore{bounded above} if it has an
upper bound.
\end{definition}

Proof of the following is similar to the proof of Proposition
\ref{prop:commuting-tensor-and-cotensor}.
\begin{proposition}
Let $\cat{A}$ be a stable category which is closed under sequential
colimits.
Suppose given a filtration and a monoidal structure on $\cat{A}$, and
assume that $\cat{A}^\op$ is complete with respect to the filtration.
Let
\begin{itemize}
\item $d$ be an upper bound for sequential colimits in $\cat{A}$,
\item $\omega$ be an upper bound for suspensions in $\cat{A}$,
\item $p$ be an upper bound for the monoidal structure.
\end{itemize}

Let $A$ be an augmented associative algebra, and $C$ an augmented
associative coalgebra, both in $\cat{A}$.
Assume that the augmentation ideal of $C$ belongs to $\cat{A}^{<-p}$.
Assume $A$ is \kore{conegative} in the sense that its augmentation
ideal belongs to $\cat{A}^{<-\omega-p}\intersect\cat{A}^{<-p}$.

Let $K$ be a right $A$-module, $L$ an $A$--$C$-bimodule, and let $X$
be a left $C$-module, all bounded above.

Then the canonical map
\[
K\tensor_A(L\cotensor_CX)\longto(K\tensor_AL)\cotensor_CX
\]
is an equivalence.
\end{proposition}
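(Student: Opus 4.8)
The plan is to deduce the statement from Proposition \ref{prop:commuting-tensor-and-cotensor} applied in the opposite category $\cat{B}:=\cat{A}^\op$, equipped with the filtration $\cat{B}_{\ge r}=(\cat{A}^{\le-r})^\op$ introduced at the start of the section. First I would record that, under this duality, tensoring over an algebra becomes cotensoring over the corresponding coalgebra and vice versa: if $A$ is the augmented algebra and $C$ the augmented coalgebra of the statement, then in $\cat{B}$ the object $C$ becomes an augmented algebra and $A$ becomes an augmented coalgebra, the operation $\tensor_A$ computed in $\cat{A}$ is the cotensor $\cotensor_A$ computed in $\cat{B}$, and $\cotensor_C$ in $\cat{A}$ is the tensor $\tensor_C$ in $\cat{B}$. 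Correspondingly, the bimodule $L$ and the one-sided modules $K$, $X$ become (co)modules in $\cat{B}$ with their sides interchanged, and the canonical map of the statement is identified, after passing to opposites, with the canonical interchange map of Proposition \ref{prop:commuting-tensor-and-cotensor} in $\cat{B}$, up to the evident left--right reflection of the monoidal structure. Since a morphism is an equivalence if and only if its opposite is, it then suffices to check that the data in $\cat{B}$ satisfy the hypotheses of that proposition.

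The heart of the argument is therefore the translation of the numerical hypotheses. Completeness of $\cat{A}^\op$ makes $\cat{B}$ complete; the upper bound $d$ for sequential colimits in $\cat{A}$ becomes a uniform bound for sequential limits in $\cat{B}$; and the upper bound $\omega$ for suspensions in $\cat{A}$ becomes a uniform bound $\omega$ for loops in $\cat{B}$ (Definition \ref{def:e-n-copositive}). The condition that the augmentation ideal of $C$ lie in $\cat{A}^{<-p}$ is exactly what makes $C$ a \emph{positive} augmented algebra in $\cat{B}$, while the conegativity hypothesis on $A$, namely that its augmentation ideal lie in $\cat{A}^{<-\omega-p}\intersect\cat{A}^{<-p}$, translates into \emph{copositivity} of $A$ as an augmented coalgebra in $\cat{B}$ relative to the loop bound $\omega$. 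Finally, $K$, $L$, $X$ bounded above in $\cat{A}$ are bounded below in $\cat{B}$, matching the boundedness hypothesis of Proposition \ref{prop:commuting-tensor-and-cotensor}.

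The step I expect to be the main obstacle is the role of the monoidal upper bound $p$: because $p$ may be positive, the induced monoidal structure on $\cat{B}$ satisfies $\cat{B}_{\ge r}\tensor\cat{B}_{\ge s}\subseteq\cat{B}_{\ge r+s-p}$ rather than the strict soundness $\cat{B}_{\ge r}\tensor\cat{B}_{\ge s}\subseteq\cat{B}_{\ge r+s}$. This is precisely why the shifts by $p$ (and by $\omega+p$) appear in the membership conditions on the augmentation ideals of $C$ and $A$: they are calibrated so that the bar construction computing $\tensor_C$ and the cobar construction computing $\cotensor_A$ still raise filtration degree fast enough, relative to the fixed constants $d$, $\omega$, $p$, for the interchange of the resulting colimit and limit to converge. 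Thus, rather than quoting Proposition \ref{prop:commuting-tensor-and-cotensor} verbatim, I would rerun its proof inside $\cat{B}$, verifying that with these shifted bounds the filtration estimates on the associated bar--cobar tower still force the canonical comparison map to be an equivalence; all remaining steps are then formally identical to those in \cite{local}.
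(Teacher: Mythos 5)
Your proposal is correct and follows essentially the same route as the paper: the paper's entire proof is the remark that one passes to $\cat{B}=\cat{A}^\op$ with the filtration $\cat{B}_{\ge r}=(\cat{A}^{\le -r})^\op$ and observes that the argument proving Proposition \ref{prop:commuting-tensor-and-cotensor} (i.e.\ \cite[Proposition 4.7]{local}) applies there. You also correctly identify the one non-formal point --- that the upper bound $p$ makes the induced monoidal structure on $\cat{B}$ fail soundness by a shift, so that the conditions on the augmentation ideals of $C$ and $A$ are the $p$-shifted (not verbatim) positivity and copositivity hypotheses, and the proof of \cite[Proposition 4.7]{local} must be rerun with these adjusted estimates rather than quoted as a black box --- which is exactly what the paper's ``similar to the proof of'' phrasing asserts.
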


It follows (if the monoidal operations preserves colimits
variable-wise) that the Poincar\'e duality theorem similar to
Theorem \ref{thm:poincare-duality-complete} holds for conegative
factorisation algebras.
(See the proof of Theorem \ref{thm:poincare-duality-complete}.)

\begin{example}
Let $\cat{A}$ be a stable category, and assume $\cat{A}$ has all
sequential colimits.
Then the filtration on the category of filtered objects of $\cat{A}$
from \cite[Section 3.2]{local} makes
its opposite category a \emph{complete} filtered stable category.
All colimits which exist in $\cat{A}$, is bounded by $0$ in the
category of filtered objects.

Moreover, if $\cat{A}$ is symmetric monoidal by operations which are
exact variable-wise, then the monoidal structure on the filtered
objects, described in \cite[Section 3.2]{local},
is bounded above by $0$.
\end{example}

\begin{example}
Let $\lie{g}$ be a dg Lie-algebra over a field (in the usual discrete
sense) of characteristic $0$.
Then the Chevalley--Eilenberg complex
$C_\dot\lie{g}=(\Sym^*(\susp\lie{g}),d)$ (where $\susp=\blank[1]$ is
the suspension functor, and the differential $d$ is the sum of the
internal differential from $\lie{g}$ and the Chevalley--Eilenberg
differential) can be refined to give a filtered chain complex
\[
\cdots\longto\zero\longto\cdots\longto\zero=F_1C_\dot\lie{g}\longto
F_0C_\dot\lie{g}\longto\cdots\longto
F_{-r}C_\dot\lie{g}\longto\cdots,
\]
where $F_{-r}C_\dot\lie{g}:=(\Sym^{\le r}(\susp\lie{g}),d)$, so
$C_\dot\lie{g}=\colim_{r\to\infty}F_{-r}C_\dot\lie{g}$.
\end{example}

$F_*C_\dot$ takes a quasi-isomorphism to quasi-isomorphism, so induces
a functor between the infinity $1$-categories where quasi-isomorphisms
are inverted.
This is since homological algebra implies that the homotopy
cofibre $F_{-r}C_\dot/F_{-r+1}C_\dot$ is given by the quotient in the
strict/discrete sense by the subcomplex, and preserves
quasi-isomorphisms (since the symmetric group has vanishing higher
homology for coefficients over our field).
Note that the Chevalley--Eilenberg differential vanishes on the
quotient.

Moreover, the functor preserves sifted homotopy colimits since the
layers (or the associated graded) as above do.

Note furthermore that $F_*C_\dot$ is obviously a lax symmetric
monoidal functor.
\begin{lemma}
The lax symmetric monoidal structure of $F_*C_\dot$ is in fact a
genuine symmetric monoidal structure.
\end{lemma}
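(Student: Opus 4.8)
The plan is to show that each component of the lax structure transformation of $F_*C_\dot$ is an equivalence of filtered objects. Concretely, for dg Lie algebras $\lie{g}$, $\lie{h}$ the lax structure map is the filtered refinement of the Künneth comparison
\[
C_\dot\lie{g}\tensor C_\dot\lie{h}\longto C_\dot(\lie{g}\directsum\lie{h}),
\]
so I would first recall that the underlying (unfiltered) map is already an isomorphism of dg cocommutative coalgebras. This is the classical strong symmetric monoidality of Chevalley--Eilenberg chains with respect to $(\Lie,\directsum)$: over a field of characteristic $0$ there is a natural isomorphism $\Sym^*(\susp\lie{g}\directsum\susp\lie{h})\cong\Sym^*(\susp\lie{g})\tensor\Sym^*(\susp\lie{h})$, and since the bracket on $\lie{g}\directsum\lie{h}$ has no cross terms, the Chevalley--Eilenberg differential on the left decomposes as $d_{\lie{g}}\tensor\id\pm\id\tensor d_{\lie{h}}$ under this identification, so the comparison is a chain map and hence an isomorphism of complexes (indeed of coalgebras).

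Next I would match the two filtrations. The target monoidal structure on filtered objects is the convolution product \cite[Section 3.2]{local}, for which the filtration level of $F_*C_\dot\lie{g}\tensor F_*C_\dot\lie{h}$ corresponding to $\Sym^{\le r}$ is the colimit $\colim_{a+b\le r}\Sym^{\le a}(\susp\lie{g})\tensor\Sym^{\le b}(\susp\lie{h})$. Under the Künneth identification above, the polynomial-degree filtration on the right-hand side reads
\[
\Sym^{\le r}\big(\susp(\lie{g}\directsum\lie{h})\big)=\Directsum_{a+b\le r}\Sym^a(\susp\lie{g})\tensor\Sym^b(\susp\lie{h}),
\]
which is exactly the convolution expression. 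Thus the lax structure map is an isomorphism at every filtration level, and the filtered lax map is a level-wise equivalence. Equivalently, one may argue on the associated graded, reusing the observation already made in the text that the layers carry vanishing Chevalley--Eilenberg differential: the symmetric-power identity $\Sym^r(V\directsum W)\cong\Directsum_{a+b=r}\Sym^a V\tensor\Sym^b W$ shows that the induced map on $\mathrm{gr}$ is the identity of $\Directsum_{a+b=r}\Sym^a(\susp\lie{g})\tensor\Sym^b(\susp\lie{h})$; since both filtrations are bounded below (they are concentrated in nonnegative polynomial degree), an equivalence on associated graded is an equivalence of filtered objects.

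The main obstacle I anticipate is bookkeeping rather than conceptual: one must make sure that the convolution colimits and the tensor products appearing above are the derived, homotopy-invariant ones, and that no derived correction intervenes. This is harmless here because we work over a field, so the tensor products are automatically flat (underived), and the convolution colimits are filtered colimits of the inclusions $\Sym^{\le a}\tensor\Sym^{\le b}\hookrightarrow\Sym^{\le a'}\tensor\Sym^{\le b'}$, which are computed strictly. Finally I would note that the coherence of the symmetric monoidal structure (associativity, unit, symmetry) is inherited from that of the underlying strong monoidal Künneth isomorphism, so nothing further needs to be checked.
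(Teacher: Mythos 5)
Your overall strategy---identify the underlying unfiltered map via the classical K\"unneth isomorphism $C_\dot\lie{g}\tensor C_\dot\lie{h}\cong C_\dot(\lie{g}\directsum\lie{h})$, then match filtrations using $\Sym^{\le r}\big(\susp(\lie{g}\directsum\lie{h})\big)=\Directsum_{a+b\le r}\Sym^a(\susp\lie{g})\tensor\Sym^b(\susp\lie{h})$---is in substance the same computation the paper performs (the paper phrases it on the layers, i.e., the associated graded, which is your alternative argument). However, your justification of the one genuinely nontrivial point is wrong. The filtration level of the monoidal structure on filtered objects is a \emph{homotopy} colimit
\[
\colim_{a+b\le r}\Sym^{\le a}(\susp\lie{g})\tensor\Sym^{\le b}(\susp\lie{h}),
\]
and the indexing poset $\{(a,b):a+b\le r\}$ is \emph{not} filtered: already for $r=1$ the vertices $(1,0)$ and $(0,1)$ admit no common upper bound inside the triangle. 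So the claim that ``the convolution colimits are filtered colimits of the inclusions, which are computed strictly'' fails; a priori the homotopy colimit over this poset need not agree with the strict union of subcomplexes, and establishing that it does is exactly the content of the lemma. (Flatness over a field disposes of the derived tensor products, but it says nothing about the colimit shape.)

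The identification you need is true, but it is precisely where the paper does its work: the paper computes the $(-r)$-th layer of the convolution as a direct sum over $i+j=r$ of cofibres of the maps
\[
\colim_{\substack{k\le i,\ \ell\le j\\ k+\ell\le r-1}}F_{-k}C_\dot\lie{g}\tensor F_{-\ell}C_\dot\lie{h}\longto F_{-i}C_\dot\lie{g}\tensor F_{-j}C_\dot\lie{h},
\]
and identifies these cofibres with $\Sym^i(\susp\lie{g})\tensor\Sym^j(\susp\lie{h})$ by ``inductive use of homological algebra''. Equivalently, you could observe that your diagram consists of subcomplexes of $C_\dot\lie{g}\tensor C_\dot\lie{h}$, closed under intersections, with all maps degreewise split monomorphisms, and prove by induction on $r$ (via cofibre/Mayer--Vietoris sequences) that the homotopy colimit computes the union. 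Note that your associated-graded variant has the same gap: it silently uses that $\mathrm{gr}$ carries the convolution product to the graded tensor product, and this monoidality of $\mathrm{gr}$ is again the content of the paper's cofibre computation, not something that comes for free. With this one step supplied, your proof is complete and coincides with the paper's.
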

\begin{proof}
Let $\lie{g}$, $\lie{h}$ be dg Lie algebras.
We would like to show that the canonical map
\[
F_*C_\dot\lie{g}\tensor F_*C_\dot\lie{h}\longto
F_*C_\dot(\lie{g}\directsum\lie{h})
\]
is a quasi-isomorphism.

It suffices to show that the map induces equivalence on all layers.
Thus, let $r\ge 0$ be an integer.
Then the ($-r$)-th layer of the target is
$\Sym^r(\susp\lie{g}\directsum\susp\lie{h})$, as has been seen
already.

The ($-r$)-th layer of the source can be seen to be the direct sum
over $i\ge 0$, $j\ge 0$ such that $i+j=r$, of the cofibre of the map
\[
\colim_{\substack{k\le i,\:\ell\le j\\
    k+\ell\le r-1}}F_{-k}C_\dot\lie{g}\tensor
F_{-\ell}C_\dot\lie{h}\longto F_{-i}C_\dot\lie{g}\tensor
F_{-j}C_\dot\lie{h}.
\]
However, by inductive use of homological algebra, this cofibre can be
seen to be $\Sym^i(\susp\lie{g})\tensor\Sym^j(\susp\lie{h})$.

The result follows immediately.
\end{proof}

It follows that if $\lie{g}$ is a locally constant sheaf of dg Lie
algebras on a manifold $M$, then $F_*C_\dot(\lie{g}^!)$ is an
(augmented) filtered dg factorisation algebra.
Since it is negative, the Poincar\'e duality theorem of this section
applies.

\begin{theorem}\label{thm:sheaf-of-lie}
Let $\lie{g}$ be a locally constant sheaf of dg Lie algebras over a
field of characteristic $0$, on a manifold $M$.
Then the (negative) filtered dg precoalgebra $F_*C_\dot(\lie{g})$
on $M$ satisfies excision.
\end{theorem}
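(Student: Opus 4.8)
The plan is to deduce the statement by transporting the conegative analogue of Theorem~\ref{thm:poincare-duality-complete} (established earlier in this section) through the functor $F_*C_\dot$. First I would record that, by the Koszul-duality equivalence $\Alg_M(\Lie,\directsum)\simeq\Sh_M(\Lie)$ of Section~\ref{sec:construction}, the sheaf of dg Lie algebras $\lie{g}$ corresponds to a locally constant factorisation algebra $\lie{g}^!$ valued in $(\Lie,\directsum)$. Since $F_*C_\dot$ is a genuine symmetric monoidal functor (by the Lemma just proved) which preserves sifted colimits, post-composition carries $\lie{g}^!$ to a locally constant filtered dg factorisation algebra $F_*C_\dot(\lie{g}^!)$: such a functor sends locally constant factorisation algebras to locally constant factorisation algebras, the defining left-Kan-extension (equivalently, excision) condition being a colimit condition that it preserves, cf.\ Proposition~\ref{prop:relating-koszul-duals}.

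Next I would verify the two hypotheses needed to apply the dual theorem. The target, the category of filtered dg objects of $\cat{A}$, is a filtered stable category whose opposite is complete and in which sequential colimits and suspensions are bounded by $0$ while the monoidal structure is bounded above by $0$, exactly as recorded in the first Example of this section; this supplies the ambient hypotheses. It then remains to check that $F_*C_\dot(\lie{g}^!)$ is \emph{negative} (conegative): its augmentation ideal is assembled from the strictly positive symmetric powers $\Sym^{\ge 1}(\susp\,\lie{g}^!)$, which the Chevalley--Eilenberg filtration $F_{-r}C_\dot=(\Sym^{\le r},d)$ places in filtration degrees $\le -1$. With the bounds $p=0$ and $\omega=0$ above, this is precisely the membership in $\cat{A}^{\le -1}$ demanded by conegativity.

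With both hypotheses in hand, the dual Poincar\'e duality theorem applies and yields that the compactly supported homology precoalgebra $\bigl(F_*C_\dot(\lie{g}^!)\bigr)^+$ satisfies excision. Finally I would identify this precoalgebra with $F_*C_\dot(\lie{g})$: because $F_*C_\dot$ is symmetric monoidal and preserves sifted colimits, it commutes with the (co)tensor and bar constructions computing compactly supported homology, so that $\int^c_U F_*C_\dot(\lie{g}^!)=F_*C_\dot\!\int^c_U\lie{g}^!=F_*C_\dot(\lie{g}(U))$ naturally in $U$. This gives the claim.

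The substantive work is already discharged by the preceding Lemma (upgrading the lax monoidal structure of $F_*C_\dot$ to a genuine one) and by the boundedness computations of the Examples; granting these, the theorem is essentially an assembly. Accordingly the only point that demands real care is the last one: confirming that the identification $\bigl(F_*C_\dot(\lie{g}^!)\bigr)^+\equivwith F_*C_\dot(\lie{g})$ is compatible with the filtrations, i.e.\ that $F_*C_\dot$ genuinely commutes with the cotensor and realisation operations defining $\int^c$ at the filtered level, which is exactly where both the sifted-colimit preservation and the genuine (rather than merely lax) monoidality are used.
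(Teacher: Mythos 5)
Your proposal is correct and takes essentially the same approach as the paper: establish that $F_*C_\dot(\lie{g}^!)$ is a negative filtered factorisation algebra, invoke the dual Poincar\'e duality theorem of this section, and identify $\bigl(F_*C_\dot(\lie{g}^!)\bigr)^+$ with $F_*C_\dot(\lie{g})$ via $\int^c_U F_*C_\dot(\lie{g}^!)=F_*C_\dot\int^c_U\lie{g}^!=F_*C_\dot(\lie{g}(U))$, exactly as in the computation of $C_\dot(\lie{g}^!)^+$ in Section \ref{sec:construction}. The paper's own proof is merely a terser statement of this argument, with the negativity and genuine monoidality points discharged in the text immediately preceding the theorem.
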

\begin{proof}
It suffices to show $F_*C_\dot(\lie{g})=F_*C_\dot(\lie{g}^!)^+$.
However, this can be done in the same way as the computation of
$C_\dot(\lie{g}^!)^+$ in Section \ref{sec:construction}.
\end{proof}


\end{document}